\documentclass[12pt, titlepage]{article}

\usepackage{color}
\usepackage{float}
\usepackage[bf,small]{caption2}
\usepackage{comment}
\usepackage{amsmath}
\usepackage{bigints}
\usepackage{enumerate}
\usepackage{afterpage}
\usepackage{rotating, booktabs}
\usepackage{amsopn}
\usepackage{amsfonts}
\usepackage{amsthm}
\usepackage{amssymb}
\usepackage{amsbsy}
\usepackage{gensymb}
\usepackage{multirow}
\usepackage{titling}
\usepackage{natbib}
\usepackage{tocbibind}
\hyphenpenalty=1000
\usepackage[a4paper,colorlinks,breaklinks,bookmarksopen,bookmarksnumbered]{hyperref}
\usepackage{epsfig,psfrag}
\usepackage{graphicx}
\usepackage{amsmath}
\usepackage{epstopdf}
\usepackage{tabularx}
\usepackage{subfigure}
\usepackage[mathscr]{euscript}
\usepackage[margin=1in]{geometry}
\usepackage{xr-hyper}

\usepackage{amsfonts} 
\usepackage{calc}
\usepackage{titlesec}

\setcounter{secnumdepth}{4}


\newcommand{\ueq}[1][]{%
  \if\relax\detokenize{#1}\relax
    \sbox0{$\underbrace{=}_{}$}%
    \mathrel{\mathmakebox[\wd0]{=}}
  \else
    \mathrel{\underbrace{=}_{\mathclap{#1}}}
  \fi}

\newcommand {\ctn}{\cite}

\pagenumbering{arabic}

\usepackage{url}
\hyphenpenalty=1000

\newcommand{\bx}{{\bf x}}

\newcommand{\bs}{{\bf s}}
\newcommand{\bS}{{\bf S}}
\newcommand{\by}{{\bf y}}
\newcommand{\bSigma}{\Sigma}
\newcommand{\bmu}{\boldsymbol{\mu}}

\newcommand{\bzero}{\boldsymbol{0}}
\newcommand{\bz}{\boldsymbol{z}}
\newcommand{\bbeta}{\boldsymbol{\beta}}
\newcommand{\bvartheta}{\boldsymbol{\vartheta}}
\newcommand{\bgamma}{\boldsymbol{\gamma}}

\newcommand{\yn}{{\bf y}_n}
\newcommand{\bepsilon}{{\bf \epsilon}}
\newcommand{\bta}{\boldsymbol{\theta}}
\newcommand{\bTheta}{\boldsymbol{\Theta}}
\newcommand{\e}{\ensuremath{\epsilon}}

\setcounter{page}{1}
\newtheorem{theorem}{Theorem}
\newtheorem{lemma}{Lemma}
\newtheorem{result}{Result}
\newtheorem{corollary}{Corollary}

\theoremstyle{definition}

\newtheorem{remark}{Remark}

\newcommand{\topline}{\hrule height 1pt width \textwidth \vspace*{2pt}}
\newcommand{\botline}{\vspace*{2pt}\hrule height 1pt width \textwidth \vspace*{4pt}}
\newtheorem{algo}{Algorithm} 

\numberwithin{equation}{section}
\numberwithin{algo}{section}
\numberwithin{table}{section}
\numberwithin{figure}{section}






\title{\Large\bf Bayes Factor Asymptotics for Variable Selection in the Gaussian Process Framework}
\vspace{.4cm}
\author{\large {\it Minerva Mukhopadhyay and Sourabh Bhattacharya} \thanks{\it
Department of Mathematics and Statistics, Indian Institute of Technology, Kanpur and
 Interdisciplinary Statistical Research Unit, Indian Statistical
Institute, 203, B. T. Road, Kolkata 700108.
Corresponding e-mail: minervam@iitk.ac.in.}}
\date{\vspace{-0.5in}}
 \begin{document}
 \maketitle
  \vspace{.2 in}
\begin{abstract}
Although variable selection is one of the most popular areas of modern statistical research, much of its development has taken place in the classical paradigm compared to
the Bayesian counterpart. Somewhat surprisingly, both the paradigms have focused almost primarily on linear models, in spite of the vast scope offered by
the model liberation movement brought about by modern advancements in studying real, complex phenomena.

In this article, we investigate general Bayesian variable selection in models driven by Gaussian processes, which allows us to treat 
linear, non-linear and nonparametric models, in conjunction with even dependent setups, in the same vein. We consider the Bayes factor route to
variable selection, and develop a general asymptotic theory for the Gaussian process framework in the ``large $p$, large $n$" settings even with $p\gg n$, 
establishing almost sure exponential convergence of the Bayes factor under appropriately mild conditions. The fixed $p$ setup is included as a special case.

To illustrate, we apply our result to variable selection in linear regression, Gaussian process model with squared exponential covariance function
accommodating the covariates, and a first order autoregressive process with time-varying covariates. 
We also follow up our theoretical investigations with ample simulation experiments in the above regression contexts 
and variable selection in a real, riboflavin data consisting of $71$ observations
but $4088$ covariates.
For implementation of variable selection using Bayes factors, we develop a novel and effective general-purpose 
transdimensional, transformation based Markov chain Monte Carlo algorithm, which has played a crucial role in our simulated and real data applications.  

\end{abstract}

\tableofcontents

\section{Introduction}
\label{sec:intro}

The importance of variable selection is undeniable, since most statistical procedures involve a large number of observed variables, or covariates, 
only a few of which are expected to have significant influence on the experiment and future prediction.
It is thus important to judiciously select those few important covariates from a relatively large pool of available covariates. 
This task involves multiple challenges. Even in the simple classical linear regression setup, false inclusion or exclusion of the variables 
may lead to false inclusion or exclusion of correlated variables. 
That, in turn, can influence the variance of predictions and hence root mean square error (RMSE), and bias of the predictions.  
The most popular methods developed in the classical paradigm, 
the penalty based methods such as the Akaike Information Criterion and the Bayesian Information Criterion, are not immune to these problems,
the former having the ill reputation of preferring models consisting of relatively large number of variables. The latter employs a more appropriate penalty
 and is preferable, but in practice, can lead to underfitting. The popular LASSO method (see, for example, \ctn{Tibshirani96}) often has the effect of drastically reducing RMSE, 
but at the cost of increasing prediction errors. See \ctn{Heinze18} for a relatively recent review regarding several of these issues; see also
 \ctn{Draper98}, \ctn{Weisberg05}. Asymptotic theory of the variable selection criterion in multiple regression has been considered in \ctn{Nishii84} and \ctn{Shao97};
 see also \ctn{Eubank99} and \ctn{Giraud15} for various issues regarding asymptotic variable selection in linear models. 
 Since even for simple linear regression 
models the variable selection issues can be of significant concern, it is well imaginable how grave the issues can be in the case of more realistically complex models 
such as nonlinear and nonparametric regression. 

Apart from some of the issues touched upon, all the classical methods of variable selection have the major drawback of selecting a single set of variables without 
quantifying the uncertainty associated with such selection. This calls for the Bayesian paradigm of variable selection, which is also rich in its repertoire of 
philosophies and methodologies. One philosophy is Bayesian model averaging, which recommends a mixture of all possible models for better prediction (see \ctn{BMA_review} for a review). 
The mixture weights ensure that important sets of covariates receive substantial weights compared to less significant sets.
Another philosophy is to infer from the posterior distribution of the regression coefficients (see, for e.g., \cite{IshwaranRao2005}).
Another philosophy is to obtain the posterior distribution of the subsets of the covariates, and from a single posterior that encapsulates all the relevant information including the possibility of all the covariates. Covariate selection in this case proceeds by stochastic search variable selection methods, which often involve variable-dimensional Markov chain Monte Carlo (MCMC) procedures (see \ctn{OHara_2009} for a review). 
%
Even though these methods are
usually computationally demanding, most of them avoid the problems faced by the classical
variable selection ideas. For details regarding various ideas on Bayesian model and variable
selection along with relevant computational strategies, see, for example,
\ctn{Gilks96}, \ctn{DiCiccio97}, \ctn{Han01}, \ctn{Fernandez01}, \ctn{Moreno08}, \ctn{Casella09}, \ctn{Ando10}, \ctn{Bayarri12}, \ctn{Johnson12}, 
\ctn{Hong12}, \ctn{marin14}, \ctn{Dawid15}.
 Asymptotic theories on Bayesian variable selection can be found in \ctn{Moreno10}, \ctn{Shang11}, \ctn{Moreno15}, \ctn{Minerva15}. However, most of these theories are developed in the linear regression setup.

However, perhaps the most principled way of comparing the subsets of covariates is offered by Bayes factors, through the ratio of the posterior and prior odds 
associated with the competing models, which follows directly from the coherent procedure of Bayesian hypothesis testing of preferring one model compared to other. 
The idea is also closely related to the aforementioned principle of obtaining posterior distributions of the covariate subsets.
For a general account of Bayes factors and its numerous advantages, see, for example, \ctn{Kass95}. However, careless use
of Bayes factors can lead to selecting the more parsimonious but wrong model in large samples even in very simple setups for ill-chosen priors, 
 as the well-known Jeffreys-Lindley-Bartlett paradox demonstrates (see \ctn{Jeffreys39}, \ctn{Lindley57}, \ctn{Bartlett57}, \ctn{Robert93}, \ctn{Villa15} for details). 
It is thus of utmost importance to carefully investigate the asymptotic theory of Bayes factors
in different setups and construct appropriate priors that ensure consistency in the sense that the Bayes factor selects the correct set of covariates asymptotically.
Note that priors that ensure consistency of posterior distributions need not guarantee consistency of Bayes factors, which is again demonstrated by the
Jeffreys-Lindley-Bartlett and information paradox (see, for example, Section 2.3 of \ctn{Liang08}). 
Here, the prior ensures posterior consistency, but not Bayes factor consistency. 
Thus, the asymptotic theory of Bayes factors does not follow from the asymptotic theory of posterior distributions.


Compared to the asymptotic theory of posterior distributions, that of 
Bayes factors for general model selection have seen relatively slow development. 
Indeed, most of the theory for variable selection using Bayes factors 
have hitherto concentrated around nested linear regression models; see, for example, \ctn{Guo09}, \ctn{Liang08}, \ctn{Moreno10}, \ctn{Rousseau12}, \ctn{Wang14}, \ctn{Kundu14}, 
\ctn{Choi15}. But see also \ctn{Wang16} for a non-nested setup. 
This seems to be a very restrictive setup for the Bayesian framework,
particularly 
in light of the current advancement in research on highly complex physical phenomena, where
simplistic models are untenable. For a general account of advancements in the area of Bayes factor asymptotics, see \ctn{Chib16}, which also asserts the same fact. 

Although variable selection has been considered in nonlinear and nonparametric frameworks such as generalized linear models, generalized additive models,
additive partial linear models, generalized additive partial linear models, semiparametric additive partial linear
models, additive nonparametric regression models (see, for example, \ctn{Chen99}, \ctn{Huang10}, \ctn{Liu11}, \ctn{Marra11}, \ctn{Meyer02}, \ctn{Ntz03}, 
\ctn{Reich09}, \ctn{Shively99}, \ctn{Wang11}, \ctn{Wang07}, \ctn{Banerjee14}), Bayes factor is not the selection criterion 
for the existing approaches.

It is thus crucially important to build appropriate asymptotic theory for Bayes factors with respect to variable selection in general
setups.

Recognizing this requirement, our endeavor in this paper is to establish consistency of Bayes factors
for variable selection in models driven by Gaussian processes. The Gaussian process framework enables us to consider linear and nonlinear, parametric as well as nonparametric
models including appropriate dependence structures, under the same umbrella, allowing the usage of a general body of mathematical apparatus to establish
our asymptotic theory. Encouragingly, such a treatment allowed us to guarantee almost sure exponential convergence of the Bayes factor in favour of the true set
of covariates under reasonably mild, verifiable assumptions, not only as the sample size increases indefinitely, but also as the total number of available covariates
increase with the sample size, possibly at faster rates, defining the so-called ``large $p$, large $n$" paradigm, which also includes the fixed $p$ situation as a special case.
We are not aware of any asymptotic theory of Bayes factors in the ``large $p$, large $n$" scenario.


We follow up our general Bayes factor convergence result with both theoretical and simulation based 
illustrations of asymptotic variable selection in linear regression model, nonparametric Gaussian process 
model where the exponential covariance function encapsulates the covariates to be selected, and a first order autoregressive model consisting of time-varying
covariates.

The rest of this paper is structured as follows. We introduce our general setup for Bayes factor based variable selection in Section \ref{sec:general_setup}.
Section \ref{sec:as_conv} 
shows almost sure convergence of the Bayes factor of any model with respect to the true model. 
Section \ref{sec:illustrations} 
provides illustrations of our main result with variable selection in linear regression and in a Gaussian process model with squared exponential covariance function.
Generalization of our results to the case of unknown error variance is provided in Section \ref{sec:unknown_error_variance}, using a conjugate prior.
Section \ref{sec:bf_int} provides further generalization of our result, assuming arbitrary priors on compact spaces for all other parameters and hyperparameters.
In Section \ref{sec:correlated_errors} we treat the case of correlated errors and present the problem of time-varying covariate selection in a first order autoregressive model
as an illustration, establishing almost sure exponential convergence of the relevant Bayes factor.
The important case of misspecification is dealt with in Section \ref{sec:misspecification}, where again almost sure exponential convergence of Bayes factor in favour 
of selection of the best possible subset of covariates, is established. 
In Section \ref{sec:overview_sim} an overview of our simulation and real data experiments are provided; complete details are relegated to the supplement. 
Finally, we summarize our work, make concluding remarks, and provide future directions
in Section \ref{sec:conclusion}.

\section{General setup for Bayes factor based variable selection}
\label{sec:general_setup}
Let $y_i$ and $\bx_i$ denote the $i$-th response variable and the associated vector of covariates, $i=1,\ldots,n$.
We assume that the covariate $\bx$ consists of $p~(>1)$ components, and that it is required to select a subset of
the $p$ components that best explains the response variable $y$. We allow $p$ to grow with $n$ at a rate $p=O(n^{r})$, $r>0$.

Let $\bs$ denote any subset of the indices ${\bS} =\{1,2,\ldots,p\}$, and $\bx_{\bs}$ denote the co-ordinates of $\bx$ associated with $\bs$.
To relate $\bx_{\bs}$ to $y$ we consider the following nonparametric regression setup:
\begin{equation}
y=f(\bx_{\bs})+\epsilon,
\label{eq:nonpara_regression}
\end{equation}
where $\epsilon\sim N(0,\sigma^2_{\epsilon})$ is the random error and the function
$f(\cdot)$ is considered unknown. We assume that
$f:\mathfrak X\mapsto {\rm I\!R}$, where $\mathfrak X=\cup_{\ell=1}^p {\rm I\!R}^{\ell}$.

By assuming this framework we include the possibility that the domain of $f$ can range from one to $p$-dimensional. 
We further assume that there exists a true set of regressors, $\bx_0$, which influences the dependent variable $y$. 
Our problem is to identify $\bx_0$, i.e., the set of active regressors. 
Note that we do not consider any specific form of the function. Irrespective of the functional form, we are interested in identifying the set of active regressors.

\subsection{The Gaussian process prior}
\label{subsec:gp}
We assign a Gaussian process prior for $f(\cdot)$ which leads, for any given subset $\bs$ and covariate values $\left\{\bx_{i,\bs}; i=1,\ldots,n\right\}$, 
to the joint multivariate normal distribution of $\left(f(\bx_{1,\bs}),\ldots,f(\bs_{n,\bs})\right)^T$
with mean and variance-covariance matrix as follows:
\begin{align}
\bmu_{n,\bs}&=\left(\mu(\bx_{1,\bs}),\ldots,\mu(\bx_{n,\bs})\right)^T;
\notag \\ 
\bSigma_{n,\bs}&=\left((Cov\left(f(\bx_{i,\bs}),f(\bx_{j,\bs})\right)\right);~~i=1,\ldots,n;~j=1,\ldots,n.
\label{eq:cov_matrix}
\end{align}
The marginal distribution of $\by_n=(y_1,\ldots,y_n)^T$ is then the $n$-variate normal, given by
\begin{equation*}
\by_n\sim N_n\left(\bmu_{n,\bs},\sigma^2_{\epsilon} I_n+\bSigma_{n,\bs}\right),
\end{equation*}
where $ I_n$ is the identity matrix of order $n$. We denote this marginal model by $\mathcal M_{\bs}$.
It will be increasingly evident as we proceed, that this relatively simple consideration is the key to unlocking a sufficiently general asymptotic theory of Bayes factors
for variable selection that allows handling of wide range of situations including parametric, nonparametric, independence and dependence, using the same basic concept
and mathematical manoeuvre. 

\subsection{The true model}
\label{subsec:true_model}
We assume that there exists exactly one particular subset $\bs_0$ of $\bS$ which is actually associated with the data generating process of $y$, which is termed as the \emph{true} subset. The evaluation procedure of the proposed set of model selection basically rests on its ability to identify this true subset, irrespective of the form of the function $f$. 

We denote the mean vector and the covariance matrix of the Gaussian process prior associated with the true model
by $\bmu^t_{n,\bs_0}$ and $\bSigma^t_{n,\bs_0}$, respectively, and denote the corresponding
marginal distribution of $\by_n$ as ${\mathcal M}^t_{\bs_0}$.
For notational simplicity we drop the suffix $n$ from $\bmu_{n,\bs}$, $\bmu^t_{n,\bs_0}$, $\bSigma_{n,\bs}$ and $\bSigma^t_{n,\bs_0}$.

\subsection{The Bayes factor for covariate selection}
\label{subsec:BF}
It follows from the general model setup and the Gaussian process prior that the Bayes factor of any model $\mathcal M_{\bs}$ to the true model $\mathcal M^t_{\bs_0}$ 
associated with the data 
is given by
\begin{eqnarray}
BF^{n}_{\bs,\bs_0}
&&=\frac{\mathcal M_{\bs}(\by_n)}{{\mathcal M}^{t}_{\bs_0}(\by_n)}= \frac{\left|\sigma^2_{\epsilon} I_n+\bSigma_{\bs}\right|^{-1/2}}{\left|\sigma^2_{\epsilon} I_n+\bSigma^{t}_{\bs_0}\right|^{-1/2}} \notag\\
&&\hspace{.55 in} \times \frac{
\exp\left\{-\left(\by_n-\bmu_{\bs}\right)^T\left(\sigma^2_{\epsilon} I_n+\bSigma_{\bs}\right)^{-1}
\left(\by_n-\bmu_{\bs}\right)/2\right\}}
{\exp\left\{-\left(\by_n-\bmu^t_{\bs_0}\right)^T
\left(\sigma^2_{\epsilon} I_n+\bSigma^{t}_{\bs_0}\right)^{-1}
\left(\by_n-\bmu^{t}_{\bs_0}\right)/2\right\}},
\label{eq:bf}
\end{eqnarray}
which is the ratio of the marginal likelihoods of the observed data $\by_n$, under the model $\mathcal M_{\bs}$ to the true model $\mathcal M^t_{\bs_0}$. This is the same as
the ratio of the posterior odds and prior odds for $\bs$ and $\bs_0$, for any prior on the models. 
If the models for $\bs$ and $\bs_0$ have the same prior distribution, then (\ref{eq:bf}) is the same as the posterior odds.

The main aim of this paper is to establish that (\ref{eq:bf}) converges to zero exponentially fast as $n\rightarrow\infty$, if $\bs\neq\bs_0$.
We shall begin with known $\sigma^2_{\epsilon}$ and other parameters, but will subsequently generalize our theory when such quantities are unknown, and
almost arbitrary, albeit sensible priors, are assigned to them.
In the next two sections we establish our main result on almost sure convergence of the log-Bayes factor.

\section{Almost sure convergence of the log-Bayes factor}
\label{sec:as_conv}
In this section we investigate Bayes factor consistency of Gaussian process regression in strong sense. We will show that for $\bs\neq\bs_0$, 
there exists an $\omega_{\bs}\in[0,1]$, and $\delta_{\bs}>0$ such that
 $$\limsup_n \frac{1}{n^{1+2r\omega_{\bs} }}\log BF^n_{\bs,\bs_0}\stackrel{a.s.}{=} -\delta_{\bs}.$$
The quantities $\omega_{\bs}$, for $\bs\subseteq\bS$, as we shall make precise in the applications, 
is related to the sparsity conditions of the underlying model $\bs_{0}$ and the competing model $\bs$. One way to interpret $\omega_{\bs}$ is to set $O(p^{\omega_{\bs}})=O(n^{r\omega_{\bs}})$ as the difference in effective
dimensionality of the true model $\bs_{0}$ and competing model $\bs$. 
Thus, when the effective dimensionality of the models indexed by $\bs$ and $\bs_{0}$ remain bounded, as $n\rightarrow\infty$, then $\omega_{\bs}=0$.
Note that depending upon the value of $\omega_{\bs}$, we can compare models of different dimensionalities. 

We first state the assumptions under which the result holds.
\begin{itemize}
\item[$\left(A1\right)$] Let
$\Delta_{n,\bs}\stackrel{def}{=}(\bmu_{\bs}-\bmu^t_{\bs_0})^T \left(\sigma^2_{\epsilon} I_n+\bSigma_{\bs}\right)^{-1}
		(\bmu_{\bs}-\bmu^t_{\bs_0})$. We assume that for any $\bs\subseteq \bS$, for some $\omega_{\bs}\in[0,1]$ 
		and $\xi_{\bs}>0$,
	 $$\liminf_n \frac{1}{n^{1+2r\omega_{\bs}}} \Delta_{n,\bs} = \xi_{\bs}.$$
\end{itemize}

\noindent Define $A_{n,\bs}=\left(\sigma^2_{\epsilon} I_n+\bSigma_{\bs_0}^{t}\right)\left(\sigma^2_{\epsilon} I_n+\bSigma_{\bs}\right)^{-1}$. We further assume the following:
\begin{itemize}
\item[($A2$)] Let $\lambda_1\geq \cdots \geq \lambda_n>0$ be the eigenvalues of $A_{n,\bs}$, then for $\omega_{\bs}$ defined in (A1),
~ $\lambda_{\max}(A_{n,\bs}) =O\left(p^{2\omega_{\bs}} \right)=O\left(n^{2r\omega_{\bs}} \right)$.
\vskip5pt
\item[($A3$)] Finally we assume that for all $\bs$, and for $\omega_{\bs}$ defined in (A1),

{\centering 	$\|\bmu_{\bs}-\bmu^t_{\bs_0}\|^2=O\left(n^{1+b}p^{2\omega_{\bs}}\right)
		=O\left(n^{1+b+2r\omega_{\bs}}\right)$, for some $b<1/2$.
		\par}
	\vskip10pt
\end{itemize}

We will show that, the quantity $\Delta_{n,\bs}$ in (A1) is asymptotically equivalent to the Kullback-Leibler (KL) divergence between the marginal density of ${\bf y}_{n}$ 
under $\bs$ and that under $\bs_{0}$, in most of the frameworks including linear model. 
Thus requiring (A1) is same as requiring positive KL divergence between $\mathcal M_{\bs}$ and $\mathcal M^t_{\bs_0}$ after proper scaling.  
Assumptions (A2) and (A3) are reasonable and verifiable restrictions.

In our illustrations with linear and Gaussian process regression, 
we will show that $p^{\omega_{\bs}}$ can be interpreted essentially as the cardinality of set difference of $\bs$ and $\bs_{0}$. 
Further, with our illustration with a first-order autoregressive regression model, we demonstrate that if $\|\bmu_{\bs}\|^2=O\left(n\|\bs\|^2\right)$, 
then $p^{\omega_{\bs}}$ may be interpreted essentially as $\max\left\{|\bs|,|\bs_0|\right\}$.



Our first result shows that limit supremum of the expected log Bayes factor of any model and the true model is negative, when scaled by $n^{1+2r\omega_{\bs}}$. 
\begin{result}
\label{theorem:mean_bf}
Assume ($A1$) holds for some $\omega_{\bs}\in[0,1]$. Then for some $\delta_{\bs}>0$ depending upon $\bs$ ($\neq\bs_0$),
\begin{equation}
	\underset{n\rightarrow\infty}{\limsup}~E_{\bs_0} \left(\frac{1}{n^{1+2r\omega_{\bs}}}\log BF^{n}_{\bs,\bs_0}\right)= -\delta_{\bs}, \notag
\end{equation}
	for the same choice of $\omega_{\bs}$ as given in (A1).
\end{result}
\begin{proof}
From (\ref{eq:bf}) we find that the expectation of logarithm of the Bayes factor is given by
\begin{align}
	\frac{1}{n^{1+2r\omega_{\bs}}}&E_{\bs_0}\left[\log\left(BF^{n}_{\bs,\bs_0}\right)\right]\notag \\
	&=\frac{1}{2n^{1+2r\omega_{\bs}}}\log \frac{\left|\sigma^2_{\epsilon}  I_n+\bSigma^{t}_{\bs_0}\right|}{\left|\sigma^2_{\epsilon} I_n+\bSigma_{\bs}\right|} \notag	\\
	 & \qquad -\frac{1}{2n^{1+2r\omega_{\bs}}}E_{\bs_0}\left[\left(\by_n-\bmu_{\bs}\right)^T
\left(\sigma^2_{\epsilon}I_n+\bSigma_{\bs}\right)^{-1}
\left(\by_n-\bmu_{\bs}\right)\right]\notag\\
	&\qquad +\frac{1}{2n^{1+2r\omega_{\bs}}}E_{\bs_0}\left[\left(\by_n-\bmu^t_{\bs_0}\right)^T
\left(\sigma^2_{\epsilon} I_n+\bSigma^t_{\bs_0}\right)^{-1}
\left(\by_n-\bmu^t_{\bs_0}\right)\right].
\label{eq:mean_log_BF_1}
\end{align}
To evaluate the first part in the above equation, note that
\begin{eqnarray}
	\frac{1}{2n^{1+2r\omega_{\bs}}} \log \frac{\left|\sigma^2_{\epsilon} I_n+\bSigma^{t}_{\bs_0}\right|}{\left|\sigma^2_{\epsilon} I_n+\bSigma_{\bs}\right|}
	=\frac{1}{2n^{1+2r\omega_{\bs}}} \log \left|A_{n,\bs} \right|=\frac{1}{2n^{1+2r\omega_{\bs}}} \sum_{j=1}^{n} \log \lambda_j (A_{n,\bs}).\notag
\end{eqnarray}
For the second term of (\ref{eq:mean_log_BF_1}) we obtain
\begin{align}
E_{\bs_0}\left[\left(\by_n-\bmu_{\bs}\right)^T\left(\sigma^2_{\epsilon} I_n+\bSigma_{\bs}\right)^{-1}
\left(\by_n-\bmu_{\bs}\right)\right]\hspace{2 in} \notag\\
= tr(A_{n,\bs})
+\left(\bmu_{\bs}-\bmu_{\bs_0}^{t}\right)^T\left(\sigma^2_{\epsilon} I_n+\bSigma_{\bs}\right)^{-1}
\left(\bmu_{\bs}-\bmu^t_{\bs_0}\right)= tr(A_{n,\bs})+\Delta_{n,\bs} .\notag
\label{eq:second_part}
\end{align}
The last term of (\ref{eq:mean_log_BF_1}) is given by
\begin{equation}
E_{\bs_0}\left[\left(\by_n-\bmu_{\bs_0}^{t}\right)^T\left(\sigma^2_{\epsilon} I_n+\bSigma_{\bs_0}^{t}\right)^{-1}
\left(\by_n-\bmu_{\bs_0}^{t}\right)\right]=n.\notag
\end{equation}
Using the above facts and from (\ref{eq:mean_log_BF_1}) observe that
\begin{eqnarray}
2E_{\bs_0}\left[\log\left(BF^{n}_{\bs,\bs_0}\right)\right]
+\Delta_{n,\bs} =\sum_{i=1}^{n} \left(\log \lambda_i - \lambda_i +1 \right). \notag
\end{eqnarray}
Note that $g(x)=\log x-x+1$ is an increasing function on $(0,1]$ and decreasing function on $(1,\infty)$, having maximum at $0$. Thus $\sum_i \left(\log \lambda_i-\lambda_i+1 \right) \leq 0$.

Thus, combining the above facts and (A1) we write
\begin{equation}
	\limsup_n \frac{1}{n^{1+2r\omega_{\bs}}}E_{\bs_0}\left[\log\left(BF^{n}_{\bs,\bs_0}\right)\right]
\leq -\xi_\bs/2. \notag 
\end{equation}
Hence, there exists $\delta_{\bs}>0$ depending upon $\bs$ such that
\begin{equation}
	\underset{n\rightarrow\infty}{\limsup}~E_{\bs_0} \left(\frac{1}{n^{1+2r\omega_{\bs}}}\log BF^{n}_{\bs,\bs_0}\right)= -\delta_{\bs}.  \notag
\end{equation} \qed
\end{proof}

\noindent Next we will prove $L_4$ convergence of $\log\left(BF^{n}_{\bs,\bs_0}\right)/n^{1+2r\omega_{\bs}}$ towards its expectation, which in turn would
imply $L_2$ convergence.

\vskip5pt
Let $B_{\bs_0}$ be the appropriate matrix associated with the Cholesky factorization of $\sigma^2_{\epsilon} I_n+\bSigma^t_{\bs_0}$, i.e., $\sigma^2_{\epsilon} I_n+\bSigma^t_{\bs_0}=B_{\bs_0}B^T_{\bs_0}$, and $C_{n,\bs}=B^T_{\bs_0}\left(\sigma^2_{\epsilon} I_n+\bSigma_{\bs}\right)^{-1}B_{\bs_0}$.
Then $\by_n-\bmu^t_{\bs_0}=B_{\bs_0}\bz_n$, with $\bz_n\sim N_n\left(\bzero,I_n\right)$. Then
$$\left(\by_n-\bmu^t_{\bs_0}\right)^T\left(\sigma^2_{\epsilon} I_n+\bSigma^t_{\bs_0}\right)^{-1}
\left(\by_n-\bmu^t_{\bs_0}\right)=\bz^T_n\bz_n,$$ and
$\left(\by_n-\bmu^t_{\bs_0}\right)^T\left(\sigma^2_{\epsilon} I_n+\bSigma_{\bs}\right)^{-1}
\left(\by_n-\bmu^t_{\bs_0}\right)
=\bz_n^T C_{n,\bs} \bz_n. $
Note further that $A_{n,\bs}$ and $C_{n,\bs}$ have the same eigenvalues. Thus, by assumption (A2), 
\begin{equation}
	\lambda_{\max}(C_{n,\bs})=O\left(n^{2r\omega_{\bs}}\right).
	\label{eq:A2_Cn}
\end{equation}

\vskip5pt

\begin{result}
\label{theorem:power4}
	Assume ($A2$) and ($A3$) hold for some $\omega_{\bs}\in[0,1]$. Then
\begin{equation*}
	n^{-1-2r\omega_{\bs}}\left\{\log\left(BF^{n}_{{\bs},{\bs_0}}\right)-E_{\bs_0}\left[\log\left(BF^{n}_{\bs,\bs_0}\right)\right] \right\}
	\stackrel{a.s.}{\longrightarrow}0,~\mbox{as}~n\rightarrow\infty.
\end{equation*}
\end{result}
\begin{proof}
For convenience,
we write $\tilde E_n : =E_{\bs_0}\left[\log\left(BF^{n}_{\bs,\bs_0}\right)\right]$.
\noindent Now note that for $A_{n,\bs}$, $\bz_n$, $B_{\bs_0}$ and $C_{n,\bs}$ as defined above
\begin{eqnarray}
&&2E_{\bs_0}\left[\log\left(BF^{n}_{\bs,\bs_0}\right)-\tilde E_n\right]^4 \hspace{2.5 in}\notag\\
 &=& E_{\bs_0}\left[-\bz^T_nC_{n,\bs} \bz_n+E_{\bs_0}\left(\bz^T_nC_{n,\bs}\bz_n \right)
+2\bz^T_n B_{\bs_0}^T\left(\sigma^2_{\epsilon}I_n+\Sigma_{\bs}\right)^{-1}\left(\bmu_{\bs}-\bmu^t_{\bs_0}\right)+\right. \notag \\
&& \hspace{3.25 in} \left. \bz^T_n\bz_n-n \right]^4 \notag\\
 &\leq& C\left[E_{\bs_0}\left|\bz^T_n C_{n,\bs} \bz_n-tr(C_{n,\bs})\right|^4
+E_{\bs_0}\left|\bz^T_n B_{\bs_0}^T\left(\sigma^2_{\epsilon}I_n+\Sigma_{\bs}\right)^{-1}\left(\bmu_{\bs}-\bmu^t_{\bs_0}\right)\right|^4 \right. \notag\\
&& \left. \hspace{2.75 in} +E_{\bs_0}\left|\bz^T_n\bz_n-n\right|^4\right]
\label{eq:power4_bound1}
\end{eqnarray}
where $C$ is a positive constant. The above result follows by repeated application of the inequality $(a+b)^q\leq 2^{q-1}(a^q+b^q)$, for non-negative $a$, $b$, where $q\geq 1$.

We first obtain the asymptotic order of the first term of (\ref{eq:power4_bound1}). Note that for any $n$ vector $\bz_n$ and any $n\times n$ matrix $C_n$
\begin{align}
&E\left\{\bz^T_n C_n\bz_n-E_{\bs_0}\left(\bz^T_n C_n\bz_n\right)\right\}^4=E\left(\bz^T_n C_n\bz_n\right)^4\notag\\
&\hspace{.7 in}-4E\left(\bz^T_n C_n\bz_n\right)^3E\left(\bz^T_n C_n\bz_n\right)
+6E\left(\bz^T_n C_n\bz_n\right)^2\left\{E\left(\bz^T_n C_n\bz_n\right)\right\}^2\notag\\
&\hspace{.8 in}-4E\left(\bz^T_n C_n\bz_n\right)\left\{E\left(\bz^T_n C_n\bz_n\right)\right\}^3
+\left\{E\left(\bz^T_n C_n\bz_n\right)\right\}^4.
\label{eq:quad_binomial1}
\end{align}
To evaluate (\ref{eq:quad_binomial1}), we make use of the following results (see, for example, \ctn{Magnus78}, \ctn{Kendall47}).
\begin{align*}
E_{\bs_0}\left(\bz^T_n C_{n,\bs} \bz_n\right)&=tr\left(C_{n,\bs}\right);  \\ 
E_{\bs_0}\left(\bz^T_n C_{n,\bs} \bz_n\right)^2&=\left[tr\left(C_{n,\bs}\right)\right]^2+2tr\left(C^2_{n,\bs} \right);\\ 
E_{\bs_0}\left(\bz^T_n C_{n,\bs} \bz_n\right)^3&=\left[tr\left(C_{n,\bs}\right)\right]^3+6tr\left(C_{n,\bs} \right)tr\left(C^2_{n,\bs} \right)+8tr\left(C^3_{n,\bs} \right);\\ 
E_{\bs_0}\left(\bz^T_n C_{n,\bs} \bz_n\right)^4&=\left[tr\left(C_{n,\bs} \right)\right]^4+32tr\left(C_{n,\bs} \right)tr\left(C^3_{n,\bs} \right)+12\left[tr\left(C^2_{n,\bs} \right)\right]^2\\ 
&\qquad\quad+12\left[tr\left(C_{n,\bs} \right)\right]^2tr\left(C^2_{n,\bs} \right)+48tr\left(C^4_{n,\bs} \right). 
\end{align*}
Substituting the above expressions in (\ref{eq:quad_binomial1}) we obtain
\begin{equation}
E_{\bs_0}\left\{\bz^T_nC_{n,\bs} \bz_n-E_{\bs_0}\left(\bz^T_n C_{n,\bs} \bz_n\right)\right\}^4
=12\left[tr\left(C_{n,\bs}^2\right)\right]^2+48tr\left(C_{n,\bs}^4\right). \notag
\end{equation}
If $\lambda_1, \ldots, \lambda_n$ are the eigenvalues of $C_{n,\bs}$, then $\lambda^k_1,\ldots,\lambda^k_n$ are the eigenvalues of 
$C_{n,\bs}^k$, for $k\in \mathbb{N}$. Therefore the above quantity reduces to
\begin{eqnarray}
	12 \left(\sum_i \lambda_i^2\right)^2+48\sum_i \lambda_i^4 \leq Cn \sum_i \lambda_i^4=O\left(n^{2+8r\omega_{\bs}}\right),
\label{eq:quad_binomial3}
\end{eqnarray}
due to 
(\ref{eq:A2_Cn}) and the fact that $\left( \sum_{i=1}^{n} a_i \right)^2 \leq n \sum_{i=1}^{n} a_i^2$.

Let us now obtain the asymptotic order of second term of (\ref{eq:power4_bound1}).
Note that, the random variable
$\bz^T_n B_{\bs_0}^T\left(\sigma^2_{\epsilon}I_n+\Sigma_{\bs}\right)^{-1}\left(\bmu_{\bs}-\bmu^t_{\bs_0}\right)$ is univariate normal
with mean zero and variance
\begin{equation*}
\hat\sigma^2_n=\left(\bmu_{\bs}-\bmu^t_{\bs_0}\right)^T\left(\sigma^2_{\epsilon}I_n+\Sigma_{\bs}\right)^{-1}\left(\sigma^2_{\epsilon}I_n+\Sigma^t_{\bs_0}\right)
\left(\sigma^2_{\epsilon}I_n+\Sigma_{\bs}\right)^{-1}\left(\bmu_{\bs}-\bmu^t_{\bs_0}\right).
\end{equation*}
Observe that
\begin{align}
&\lambda_{\max}\left[\left(\sigma^2_{\epsilon}I_n+\Sigma_{\bs}\right)^{-1}\left(\sigma^2_{\epsilon}I_n+\Sigma^t_{\bs_0}\right)
	\left(\sigma^2_{\epsilon}I_n+\Sigma_{\bs}\right)^{-1}\right]\notag\\
&\leq\lambda_{\max}\left[\left(\sigma^2_{\epsilon}I_n+\Sigma_{\bs}\right)^{-1}\right]
	\lambda_{\max}\left[\left(\sigma^2_{\epsilon}I_n+\Sigma^t_{\bs_0}\right)\left(\sigma^2_{\epsilon}I_n+\Sigma_{\bs}\right)^{-1}\right]\notag\\
&=\frac{\lambda_{\max}(A_{n,\bs})}{\lambda_{\min}(\sigma^2_{\epsilon}I_n+\Sigma_{\bs})}\leq\sigma^{-2}_{\epsilon}\lambda_{\max}(A_{n,\bs}),\notag 
\end{align}
by Result \ref{theorem:wang} (in Section \ref{sec:appendix} of the supplement).
 Hence, 
 $$\left(\sigma^2_{\epsilon}I_n+\Sigma_{\bs}\right)^{-1}\left(\sigma^2_{\epsilon}I_n+\Sigma^t_{\bs_0}\right)
\left(\sigma^2_{\epsilon}I_n+\Sigma_{\bs}\right)^{-1}\leq \sigma^{-2}_{\epsilon}\lambda_{\max}(A_{n,\bs})I_n.$$
 Therefore, 
$\hat\sigma^2_n\leq \sigma^{-2}_{\epsilon}\lambda_{\max}(A_{n,\bs}) \|\bmu_{\bs}-\bmu^t_{\bs_0}\|^2=O\left(n^{1+b+4r\omega_{\bs}}\right)$, 
due to ($A2$) and ($A3$).
Hence it follows that
\begin{equation}
E_{\bs_0}\left|\bz^T_n B_{\bs_0}^T\left(\sigma^2_{\epsilon}I_n+\Sigma_{\bs}\right)^{-1}\left(\bmu_{\bs}-\bmu^t_{\bs_0}\right)\right|^4
	=3\hat\sigma^4_n=O\left(n^{2+2b+8r\omega_{\bs}}\right). 
\label{eq:var2_order}
\end{equation}
Finally, we deal with the third term of (\ref{eq:power4_bound1}).
As $\bz^T_n\bz_n-n =
\sum_{i=1}^n\left(z^2_i-1\right)$, where, for $i=1,\ldots,n$,
$z^2_i\stackrel{iid}{\sim}\chi^2_1$. By Lemma B of \ctn[p. 68]{Serfling80}, it follows that
\begin{equation}
E_{\bs_0}\left(\bz^T_n\bz_n-n\right)^4=O\left(n^2\right).
\label{eq:power4_4}
\end{equation}
Substituting (\ref{eq:quad_binomial3}), (\ref{eq:var2_order}) and (\ref{eq:power4_4}) in (\ref{eq:power4_bound1}) we obtain
\begin{equation}
	E_{\bs_0}\left[\log\left(BF^{n}_{\bs,\bs_0}\right)-\tilde E_n\right]^4=O\left(n^{2+2b+8r\omega_{\bs}}\right). 
\label{eq:power4_order1}
\end{equation}

\noindent Chebychev's inequality, in conjunction with (\ref{eq:power4_order1}) guarantees that for any $\eta>0$,
\begin{equation*}
	\sum_{n=1}^{\infty}P_{\bs_0}\left(\left|\log\left(BF^{n}_{\bs,\bs_0}\right)-\tilde E_n \right|>n^{1+2r\omega_{\bs}}\eta\right)<\infty,
\end{equation*}
as $b<1/2$, proving almost sure convergence of $n^{-1-2r\omega_{\bs}}\left\{\log\left(BF^{n}_{{\bs},{\bs_0}}\right)-\tilde E_n \right\}$
to $0$, as $n\rightarrow\infty$.\qed
\end{proof}

Now we state the main theorem, the proof of which follows as an application the above result, and Result \ref{theorem:mean_bf}.
\begin{theorem}[Main theorem]
\label{theorem:as_conv}
	Suppose the assumptions ($A1$)--($A3$) hold for some $\omega_{\bs}\in[0,1]$, and $\delta_{\bs}>0$ depending upon $\bs$ ($\neq\bs_0$), then 
\begin{equation}
	\limsup_n \frac{1}{n^{1+2r\omega_{\bs}}}\log\left(BF^{n}_{\bs,\bs_0}\right)\stackrel{a.s.}{=} -\delta_{\bs}.  \notag 
\end{equation}
\end{theorem}
\begin{remark}
	\label{remark:remark1}
	Recall that $p^{\omega_{\bs}}$ is related to the effective dimensionality of the model indexed by $\bs$. 
When $p$ is fixed, then $p^{\omega_{\bs}}$, which can also be interpreted essentially as the difference in the set of covariates under $\bs$ and $\bs_{0}$, is zero. 
	Indeed, keeping $p$ fixed and proceeding exactly in the same way as the proof of Theorem \ref{theorem:as_conv}, and setting $\omega_{\bs}=0$ in assumptions ($A1$)--($A3$), would yield the result 
\begin{equation}
	\limsup_n \frac{1}{n}\log\left(BF^{n}_{\bs,\bs_0}\right)\stackrel{a.s.}{=} -\delta_{\bs}. \notag
\end{equation}
Further, if $p$ were fixed, then the number of models $2^p-1$, would be finite. In that case, under assumptions ($A1$)--($A3$) (with $\omega_{\bs}=0$ for all $\bs\in\bS$), 
	there would exist $\delta>0$, such that
\begin{equation}
\max_{\bs\neq\bs_0} 
	\limsup_n \frac{1}{n}\log\left(BF^{n}_{\bs,\bs_0}\right)\stackrel{a.s.}{=} -\delta. \notag
\end{equation}
\end{remark}

\begin{remark}
	\label{remark:remark2}
One can establish a relatively weaker version of consistency result, 
	$$\limsup_n \frac{1}{n^{1-\epsilon+2r\omega_{\bs}}}\log\left(BF^{n}_{\bs,\bs_0}\right)\stackrel{a.s.}{=} -\delta_{\bs}, \qquad \epsilon<1/4, $$
	under a weaker variant of assumption (A1):
	$(A1^{\star})$ $\liminf_{n} n^{-1+\epsilon- 2r\omega_{\bs}} \Delta_{n,\bs} = \xi_{\bs}.$
	However, assumption (A3) should be replaced by $(A3^{\star}):$ $\|\bmu_{\bs}-\bmu^t_{\bs_0}\|^2=O\left(np^{2\omega_{\bs}}\right)=O\left(n^{1+2r\omega_{\bs}}\right)$, for all $\bs$, which,  nonetheless, remains a mild assumption. When $r\omega_{\bs}$ is large, this version of consistency becomes more appropriate than the traditional one.
\end{remark}

\begin{remark}
	\label{remark:remark3}
	Note that Theorem \ref{theorem:as_conv} remains valid even for nested models $\mathcal M_{\bs}$ and $\mathcal M_{\bs_0}$ where one model has $O(p^{\omega_{\bs}})$ 
	number of covariates more than the other, where $\omega_{\bs} \in (0,1]$. 
\end{remark}

\section{Illustrations}
\label{sec:illustrations}
This section provides illustrations of our main result in two different regression contexts, linear regression and Gaussian process regression with squared exponential covariance function.
\subsection{Linear regression}
\label{sec:linear_regression}
For illustration of our Bayes factor theory let us first consider the linear regression.
Let $y_i=\bbeta_{\bs}^T \bx_{i,\bs}+\epsilon_i$, where
$\epsilon_i\stackrel{iid}{\sim}N\left(0,\sigma^2_{\epsilon}\right)$, for $i=1,\ldots,n$. Let
$\bbeta_{\bs}\sim N\left(\bbeta_{0,\bs},g_n \sigma^2_{\bbeta}\left( X_{\bs}^T X_{\bs}\right)^{-1} \right)$.
Let $\bs_0~(\subseteq\bS=\{1,2,\ldots,p\})$ be the set of
indices of the true set of covariates, and $p=O(n^r) ~(r>0)$.
Zellner's $g$-prior assigns  $\bbeta_{0,\bs}={\bf 0}$. We instead make our prior more flexible by assuming that for all $\bs$, 
$\|\bbeta_{0,\bs}\|_{L_1}=\sum_{j=1}^{|\bs|}|\beta_{0,j}|=O(|\bs|)$. We further assume that $g_n=O(p^{\omega_{\bs}})$. 

We assume that the space of covariates is compact, which, as we show, is sufficient to ensure ($A1$)--($A3$).
Observe that Zellner's $g_n$-prior induces a Gaussian process prior on the function $f(\bx_{i,\bs})=\bx_{i,\bs}^T \bbeta_\bs$ with mean function
\begin{equation*}
\mu\left(\bx_{i,\bs}\right)=\bbeta_{0,\bs}^T\bx_{i,\bs}=\bmu_{\bs},
\end{equation*}
and the covariance between $\bbeta_{\bs}^T\bx_{i,\bs}$ and $\bbeta_{\bs}^T\bx_{j,\bs}$ is given by
\begin{align*}
Cov\left(\bbeta_{\bs}^T\bx_{i,\bs},\bbeta_{\bs}^T\bx_{j,\bs}\right)
&=\sigma^2_{\bbeta}g_n \bx^T_{i,\bs}\left(X_{\bs}^TX_{\bs}\right)^{-1}\bx_{j,\bs}.
\end{align*}
Therefore, $\Sigma_{\bs}=\sigma^2_{\bbeta}g_n X^T_{\bs}\left(X_{\bs}^TX_{\bs}\right)^{-1}X_{\bs}=\sigma^2_{\bbeta}g_nP_{n,\bs}$, where $P_{n,\bs}$ is the projection matrix on the space of $X_{\bs}$.  

We verify assumptions (A1)--(A3) under this setup. To see that assumption (A1) holds, we first calculate the Kullback-Leibler divergence between the marginal density of $\yn$ under $\bs$ and that under $\bs_0$, $\mathcal{KL}^n(\bs,\bs_0)$, which is
\begin{align*}
\mathcal{KL}^n(\bs,\bs_0) &\propto tr\left(A_{n,\bs}\right)-\log|A_{n,\bs}| -n\\
& \hspace{.5 in} +\left(\bmu_{\bs}- \bmu_{0,\bs_0}^t \right)^T \left(\sigma_{\epsilon}^2 I_n + \sigma^2_{\bbeta}g_n P_{n,\bs}  \right)^{-1}\left(\bmu_{\bs}- \bmu_{0,\bs_0}^t \right)\\
&=tr\left(A_{n,\bs}\right)-\log|A_{n,\bs}| -n+\Delta_{n,\bs}. 
\end{align*}
As the eigenvalues of a projection matrix
can only be zero or one, and the traces of $P_{n,\bs_0}$ and $P_{n,\bs}$ are $|\bs_0|$ and $|\bs|$, respectively, we have
\begin{align}
tr\left(A_{n,\bs}\right)&=tr\left[\left(\sigma^2_{\epsilon}I_n+\sigma^2_{\bbeta}g_nP_{n,\bs}\right)^{-1}\left(\sigma^2_{\epsilon}I_n+\sigma^2_{\bbeta}g_nP_{n,\bs_0}\right)\right]\notag\\
&=tr\left[I_n+\sigma^2_{\bbeta}g_n\left(\sigma^2_{\epsilon}I_n+\sigma^2_{\bbeta}g_nP_{n,\bs}\right)^{-1}\left(P_{n,\bs_0}-P_{n,\bs}\right)\right]\notag\\
&=n+\sigma^2_{\bbeta}g_ntr\left(D_{n,\bs}\right),
\label{eq:tr_A}
\end{align}
where $D_{n,\bs}=\left(\sigma^2_{\epsilon}I_n+\sigma^2_{\bbeta}g_nP_{n,\bs}\right)^{-1}\left(P_{n,\bs_0}-P_{n,\bs}\right)$.
By Result \ref{eqn:symmetric2} we have

{\centering $\left(\sigma^2_{\epsilon}\right)^{-1}I_n\geq\left(\sigma^2_{\epsilon}I_n+\sigma^2_{\bbeta}g_nP_{n,\bs}\right)^{-1}\geq\left(\sigma^2_{\epsilon}+g_n\sigma^2_{\bbeta}\right)^{-1}I_n,$\par}

\noindent so that
\begin{equation}
\frac{|\bs_0|-|\bs|}{\sigma^2_{\epsilon}+g_n\sigma^2_{\bbeta}}\leq tr\left(D_{n,\bs}\right)\leq\frac{|\bs_0|-|\bs|}{\sigma^2_{\epsilon}}.
\notag
\end{equation}
Substituting the above in (\ref{eq:tr_A}) yields
\begin{equation*}
n+\sigma^2_{\bbeta}g_n\frac{|\bs_0|-|\bs|}{\sigma^2_{\epsilon}+g_n\sigma^2_{\bbeta}}\leq tr\left(A_{n,\bs}\right)\leq n+\sigma^2_{\bbeta}g_n\frac{|\bs_0|-|\bs|}{\sigma^2_{\epsilon}}.
\end{equation*}
As $|\bs_{0}|-|\bs|\leq |\bs \Delta \bs_{0}|$, assuming that $|\bs \Delta \bs_{0}| \leq O(p^{\omega_{\bs}})=O(n^{r\omega_{\bs}})$, 
in conjunction with the assumption that $g_n=O(p^{\omega_{\bs}})$, as $n\rightarrow\infty$,
\begin{equation}
	\frac{tr\left(A_{n,\bs}\right)}{n^{1+2r\omega_{\bs}}}\rightarrow \left\{\begin{array}{ccc}1 &\mbox{if} & \omega_{\bs}=0;\\ 0 & \mbox{if} & \omega_{\bs}\in (0,1].\end{array}\right. \notag
\end{equation}
Further
\begin{eqnarray}
|A_{n,\bs}|= \frac{\left| I +\sigma_{\bbeta}^2 g_n\sigma^{-2}_{\epsilon} P_{n,\bs_0} \right|}{ \left| I +\sigma_{\bbeta}^2 g_n\sigma^{-2}_{\epsilon} P_{n,\bs} \right| } = \left(1+\frac{\sigma_{\bbeta}^2 g_n}{\sigma^2_{\epsilon}} \right)^{|\bs_0|-|\bs|}.\notag
\end{eqnarray}
Therefore,
	$\log |A_{n,\bs}|/n^{1+2r\omega_{\bs}} 
	= \left(|\bs_0|-|\bs| \right) \log \left(1+ \sigma_{\bbeta}^2 g_n/\sigma^2_{\epsilon} \right)/n^{1+2r\omega_{\bs}} \rightarrow 0,~\mbox{as}~n \rightarrow \infty.$ 
Combining the above facts, we get, for all $\omega_{\bs}\in[0,1]$, 
\begin{eqnarray*}
	\frac{1}{n^{1+2r\omega_{\bs}}} \left\{ tr\left(A_{n,\bs}\right) -\log|A_{n,\bs}| -n \right\} \rightarrow 0,~\mbox{as}~n\rightarrow\infty. 
\end{eqnarray*}
Thus
\begin{eqnarray*}
	\liminf_n \frac{1}{n^{1+2r\omega_{\bs}}} \mathcal{KL}^n(\bs,\bs_0) = 
	\liminf_n \frac{\Delta_{n,\bs}}{n^{1+2r\omega_{\bs}}}.
\end{eqnarray*}
Thus assumption (A1) is implied by $\liminf_n n^{-(1+2r\omega_{\bs})} \mathcal{KL}^n(\bs,\bs_0)>0$, 
which is a natural assumption.
Bounded, positive eigenvalues of $\sigma^2_{\epsilon}I_n+\Sigma_{\bs}$ and the second part of Result \ref{eqn:symmetric2} (see Section \ref{sec:appendix} of the supplement) imply that $\Delta_{n,\bs}$ is of the same order
as $\|\bmu_{\bs}-\bmu^t_{0,\bs_0}\|^2$, which again, is $O(n^{1+2r\omega_{\bs}})$, as we show below. Viewing the requirement of ($A1$) from this perspective, it seems natural
to demand that the mean functions of the competing and the true models be distinct in the sense that $\liminf_n\|\bmu_{\bs}-\bmu^t_{0,\bs_0}\|^2/n^{1+2r\omega_{\bs}}>0$.

\vskip5pt
To check assumption ($A2$) note that for positive definite Hermitian matrices $A$ and $B$, $\lambda_{\max}(AB) \leq \lambda_{\max}(A) \lambda_{\max}(B)$. Using this fact and as $\sigma_{\bbeta}^2 g_n\sigma^{-2}_{\epsilon}=O(p^{\omega_{\bs}})$, it is easily seen
that $$\lambda_{\max}(A_{n,\bs})\leq \left(1+\sigma_{\bbeta}^2 g_n\sigma^{-2}_{\epsilon} \right) =O(p^{\omega_{\bs}}).$$

\vskip5pt
\noindent Finally we check ($A3$). Note that
\begin{eqnarray*}
	\left\|\bmu_{\bs}-\bmu^t_{0,\bs_0} \right\|^2 \leq \left\|X_{\bs\Delta \bs_{0}}\bbeta_{0,\bs\Delta\bs_{0}} \right\|^2,
\end{eqnarray*}
as the prior mean of the $j$-th covariate $\beta_{0,j}$ remains same accross different models which include the $j$-th covariate.

Further, recall that for any $\bs$, $\|\bbeta_{0,\bs}\|_{L_1}=O(|\bs|)$. 
Since the covariates lie on a compact space, it follows that
$\left\| X_{\bs\Delta \bs_{0}}\bbeta_{0,\bs\Delta \bs_{0}} \right\|^2 = \sum_{i=1}^n\left(\bx^T_{i,\bs\Delta \bs_{0}}\bbeta_{0,\bs\Delta \bs_{0}}\right)^2 = O\left(np^{2\omega_{\bs}}\right)=O(n^{1+2r\omega_{\bs}})$, if $|\bs\Delta \bs_{0}|=O\left(p^{\omega_{\bs}}\right)$.
Thus ($A3$) holds.

Thus Theorem \ref{theorem:as_conv} holds for the linear regression setup.
This result is summarized in the form of the following theorem.

\begin{theorem}
	\label{theorem:linreg}
Consider the linear regression model $y_i=\bbeta_{\bs}^T \bx_{i,\bs}+\epsilon_i$, where
$\epsilon_i\stackrel{iid}{\sim}N\left(0,\sigma^2_{\epsilon}\right)$, for $i=1,\ldots,n$. Let
	$\bbeta_{\bs}\sim N\left(\bbeta_{0,\bs},g_n \sigma^2_{\bbeta}\left( X_{\bs}^T X_{\bs}\right)^{-1} \right)$, where $1\leq |\bs|\leq p$ and $p=O\left(n^r\right)$, $r>0$.
Assume that the space of covariates is compact, and 
	$\|\bbeta_{0,\bs}\|_{L_1}=\sum_{j=1}^{|\bs|}|\beta_{0,j}|=O(|\bs|)$. Further, if there exists some $\omega_{\bs}\in [0,1]$ such that 
	$|\bs\Delta \bs_{0}|=O\left(p^{\omega_{\bs}}\right)$, and  $ \mathcal{KL}^n(\bs,\bs_0)/(n^{1+2r\omega_{\bs}})>0$, then for $g_n=O(p^{\omega_{\bs}})$ 
the statement of Theorem \ref{theorem:as_conv} holds.
\end{theorem}

\subsection{Gaussian process with squared exponential kernel}
\label{sec:gp_illustration}

We now consider the problem of variable selection in nonparametric model of the form $y=\bx^T_{\bs}\bbeta_{\bs}+
f(\bx_\bs)+\epsilon$, where $f$ belongs to a Hilbert space $\mathcal H$. 
Let $f(\bx_\bs)$ be modeled by a zero-mean Gaussian process with 
squared exponential covariance kernel of the form
\begin{eqnarray}
 Cov\left(f(\bx_\bs), f(\bx_\bs^{\prime}) \right) = \sigma_f^2 \exp \left\{-\frac{1}{2} \left(\bx_\bs-\bx^{\prime}_\bs\right)^T D_\bs \left(\bx_\bs-\bx^{\prime}_\bs\right)  \right\}.
\label{var:rkhs}
 \end{eqnarray}
Here $\sigma_f^2$ can be interpreted as the process variance,
and the diagonal elements of $D_{\bs}$
can be interpreted as the smoothness parameters.
As in the case of linear regression, we consider the Zellner's $g$-prior for $\bbeta_{\bs}$.
Thus, the mean function $\mu_{\bs}$ here is of the same form as in the linear regression case.

We denote the 
covariance matrix by $\Sigma_{\bs}$, as before. Note that the $(i,j)$-th element of $\bSigma_{\bs}$ here is 
$$\sigma^2_{\bbeta}g_n \bx^T_{i,\bs}\left(X_{\bs}^TX_{\bs}\right)^{-1}\bx_{j,\bs}
+\sigma_f^2 \exp \left\{-\frac{1}{2} \left(\bx_{i,\bs}-\bx_{j,\bs}\right)^T D_\bs \left(\bx_{i,\bs}-\bx_{j,\bs}\right)  \right\}.$$

\paragraph{True model} As before we indicate a particular subset of $\bS=\{ 1, \ldots, p\}$ as the true set of regressors $\bs_0$. The corresponding mean vector and variance matrices are denoted by $\bmu_{\bs_0}^{t}$ and $\Sigma_{\bs_0}^{t}$, respectively.

\paragraph{Assumption} Before verifying assumption (A1)-(A3), we state the following assumption on the design matrix. 
\begin{enumerate}[(A4)]
 \item We assume that $\left\{\bx_{j,\bs}:j=1,2,\ldots\right\}$ are such that for all $i\geq 1$,
	 $$\sum_{j\neq i=1}^n\exp\left\{-\frac{1}{2}\left(\bx_{i,\bs}-\bx_{j,\bs}\right)^T D_\bs \left(\bx_{i,\bs}-\bx_{j,\bs}\right)/2\right\}=K_{\bs}=O(1), $$ 
\end{enumerate}
where $K_{\bs}~(>0)$ may depend upon $\bs$.


\subsection*{Verification of the assumptions}
We verify assumptions (A1)--(A3) under this setup and assuming (A4) holds.

\vskip5pt
First note that assumption (A3) is satisfied in the same way as in the linear regression case.

Before verifying (A1), note that by Gerschgorin's circle theorem,
every eigenvalue $\lambda$ of any $n\times n$ matrix $A$ with $(i,j)$-th element $a_{ij}$ satisfies $|\lambda-a_{ii}|\leq\sum_{j\neq i}|a_{ij}|$, for at least one
$i\in\{1,\ldots,n\}$ (see, for example, \ctn{Lange10}).
In our case it then follows by (A4) that the maximum eigenvalue of the covariance matrix associated with 
$f(\cdot)$ is bounded above by $K_{\bs}$.
Also, the covariance matrix associated with the linear part $\bx^T_{\bs}\bbeta_{\bs}$ is essentially the projection matrix, with maximum eigenvalue $1$.
Hence, using the second part of Result \ref{eqn:symmetric2}, we conclude that the maximum eigenvalue of $\bSigma_{\bs}$ is bounded above by finite $\tilde K_{\bs}>0$.

To verify ($A1$),
note that $\left(\sigma^2_{\epsilon}I_n+\Sigma_{\bs}\right)^{-1} \succ \left(\sigma^2_{\epsilon}+ \tilde K_{\bs}\right)^{-1}I_n$ by 
the first part of Result \ref{eqn:symmetric2}.
Hence,
\begin{equation}
	n^{-1-2r\omega_{\bs}}\Delta_{n,\bs}>\left(\sigma^2_{\epsilon}+\tilde K_\bs\right)^{-1}n^{-1-2r\omega_{\bs}}\|\bmu_{n,\bs}-\bmu_{n,\bs_0}^{t}\|^2.
\label{eq:rkhs_A1}
\end{equation}
Now, if we wish to enforce distinguishability of only the mean functions of the competing models in the sense that
\begin{equation}
	\underset{n}{\lim\inf}~n^{-1-2r\omega_{\bs}}\|\bmu_{\bs}-\bmu_{\bs_0}^{t}\|^2>0,
\label{eq:rkhs_A1_2}	
\end{equation}
then it is clear from (\ref{eq:rkhs_A1}) that ($A1$) holds.

\noindent Next we check (A2).
By (A4) the maximum eigenvalue of $\Sigma_{\bs_0}$, $\lambda_{\max}(\Sigma_{\bs_0}^{t})\leq \tilde K_{\bs_0}$.
Then
\begin{eqnarray*}
\lambda_1(A_{n,\bs})&=&\lambda_{\max}\left[\left( \sigma_\epsilon^2 I_n + \Sigma_{\bs}\right)^{-1} \left(\sigma_\epsilon^2 I_n + \Sigma_{\bs_0}^{t}\right)\right]\\
&\leq&\frac{\lambda_{\max}\left(\sigma_\epsilon^2 I_n + \Sigma_{\bs_0}^{t}\right)}{\lambda_{\min}\left( \sigma_\epsilon^2 I_n + \Sigma_{\bs}\right)}
\leq\frac{\sigma^2_{\epsilon}+ \tilde K_{\bs_0}}{\sigma^2_{\epsilon}}=O(1),
\end{eqnarray*}
showing that ($A2$) holds.

Therefore, we have established the following theorem:

\begin{theorem}
	\label{theorem:gpreg}
	Consider the regression model $y_i=\bbeta_{\bs}^T \bx_{i,\bs}+f(\bx_{i,\bs})+\epsilon_i$, where
	$\epsilon_i\stackrel{iid}{\sim}N\left(0,\sigma^2_{\epsilon}\right)$, for $i=1,\ldots,n$. Let
	$\bbeta_{\bs}\sim N\left(\bbeta_{0,\bs},g_n \sigma^2_{\bbeta}\left( X_{\bs}^T X_{\bs}\right)^{-1} \right)$, where $1\leq |\bs|\leq p$ and $p=O\left(n^r\right)$, $r>0$.
	Let $f(\cdot)$ be a zero-mean Gaussian process with a squared exponential covariance kernel of the form (\ref{var:rkhs}). Assume that the space of covariates is compact,
and	$\|\bbeta_{0,\bs}\|_{L_1}=O(|\bs|)$.
	If there exists some $\omega_{\bs}\in[0,1]$ such that $|\bs\Delta \bs_{0}|=O\left(p^{\omega_{\bs}}\right)$, and $\underset{n}{\lim\inf}~n^{-1-2r\omega_{\bs}}\|\bmu_{\bs}-\bmu_{\bs_0}\|^2>0$, and further if ($A4$) holds, then for $g_n=O(p^{\omega_{\bs}})$
	the statement of Theorem \ref{theorem:as_conv} 
	holds. 
\end{theorem}

Additionally, consider the following remarks.

\begin{remark}
The condition in (\ref{eq:rkhs_A1})
also implies that 
	${\lim\inf}_n~n^{-1-2r\omega_{\bs}}\mathcal{KL}^n(\bs,\bs_0)$

	\noindent $={\lim\inf}_n~n^{-1-2r\omega_{\bs}} \left\{tr(A_{n,\bs})-\log|A_{n,\bs}|-n+\Delta_{n,\bs} \right\}>0$,
since $tr(A_{n,\bs})-\log|A_{n,\bs}|-n\geq 0$.
	Recall that even in the linear regression setup we had replaced the KL-divergence ${\lim\inf}_n~n^{-1-2r\omega_{\bs}}\mathcal{KL}^n(\bs,\bs_0)>0$
with the above mean divergence condition (\ref{eq:rkhs_A1}) to verify ($A1$), since the eigenvalues of $\Sigma_{\bs}$ in that setup
are also bounded.
\end{remark}

\begin{remark}The linear regression term in the mean function can be replaced by any function $\bmu_{\bs}$ subject to the condition 
	$\left\| \bmu_{\bs} \right\|_{L_1}=O(np^{2\omega_{\bs}})=O(n^{1+2r\omega_{\bs}})$, where $0\leq \omega_{\bs} \leq 1$. 
	It is easy to verify assumptions (A1) and (A3) under the aforementioned restriction on $\bmu_{\bs}$.
\end{remark}

\section{The case with unknown error variance}
\label{sec:unknown_error_variance}
So far we have assumed that the error variance $\sigma^2_{\epsilon}$ is known. In reality, this may also be unknown and we need to assign a prior on the same. 
For our purpose, for any $\bx_{i},\bx_{i}\in\mathfrak X$, we now set $Cov(f(\bx_{i}),f(\bx_{j}))=\sigma^2_{\epsilon}c(\bx_{i},\bx_{j})$, where $c({\bf x},{\bf y})$ is some appropriate
correlation function, $i,j=1,\ldots,n$. Thus, we set the process variance of $f(\cdot)$ to be the same as the error variance. Although this might seem somewhat restrictive
from the inference perspective, for Bayes factor based variable selection this is quite appropriate, as we establish almost sure exponential convergence
of the resultant Bayes factor associated with this prior, in favour of the true set of covariates.

With the aforementioned modification, we assign the conjugate inverse-gamma prior on $\sigma^2_{\epsilon}$ with parameters $\alpha,~\beta$ as follows:
\begin{equation}
 \pi \left(\sigma^2_\epsilon\right)=\frac{\beta^\alpha}{\Gamma(\alpha)}\sigma_\epsilon^{-2(\alpha+1)}\exp\left(-\frac{\beta}{\sigma^2_\epsilon }\right), ~~~\alpha>2,~\beta>0. 
\label{eq:invgamma}
\end{equation}
Under the same prior setup on $f$, the marginal of $\by_n=(y_1,\ldots,y_n)^T$ given $\sigma^2_\epsilon$ is the $n$-variate normal, given by
\begin{equation}
\by_n\sim N_n\left(\bmu_{\bs},\sigma^2_{\epsilon}\left( I_n+\bSigma_{\bs}\right)\right),
\label{eq:marginal_y_modified}
\end{equation}
where $\bSigma_{\bs}$ is as given in (\ref{eq:cov_matrix}).
After marginalizing $\sigma^2_\epsilon$ the marginal of $\by_n$ is
\begin{eqnarray}
 m_{\bs}\left( \by_n \right)\propto \left| I+\Sigma_{\bs}\right|^{-1/2} \left\{ \left(\by_n-\bmu_{\bs}\right)^T\left( I_n+\bSigma_{\bs}\right)^{-1}
\left(\by_n-\bmu_{\bs}\right) +2\beta \right\}^{-(\alpha+n/2)+1},    \notag
\end{eqnarray}
which is proportional to the density of multivariate $t$ distribution with location parameter $\bmu_{\bs}$, covariance matrix $\beta\left( I_n+\bSigma_{\bs}\right)/(\alpha-1)$, and degrees of freedom $2(\alpha-1)$. Thus, $E(\by_n)=\bmu_{\bs}$, and $Var(\by_n)=\beta\left( I_n+\bSigma_{\bs}\right)/(\alpha-2)$, under $\mathcal{M}_{\bs}$.

Here the Bayes factor of any model $\bs$ to the true model $\bs_0$ is
\begin{align}
 BF^{n}_{\bs,\bs_0}=&\frac{\left| I_n+\Sigma_{\bs_0}^{t}\right|^{1/2}}{\left| I_n+\Sigma_{\bs}\right|^{1/2}} \notag \\
 &\times \left[ \frac{\left(\by_n-\bmu_{\bs}\right)^T\left( I_n+\bSigma_{\bs}\right)^{-1}
\left(\by_n-\bmu_{\bs}\right) +2\beta }{\left(\by_n-\bmu_{\bs_0}^{t}\right)^T\left( I_n+\bSigma_{\bs_0}^{t}\right)^{-1}
\left(\by_n-\bmu_{\bs_0}^{t}\right) +2\beta } \right]^{-(\alpha+n/2)+1}.
\notag
\end{align}
%
As before, define 
$\bz_n\sim N({\bf 0}, I_n)$ such that 

$\bz_n^T\bz_n =\left(\by_n-\bmu_{\bs_0}^{t}\right)^T\left( I_n+\bSigma_{\bs_0}^{t}\right)^{-1}
\left(\by_n-\bmu_{\bs_0}^{t}\right)$, 

 $\Delta_{n,\bs}=(\bmu_{\bs}-\bmu^t_{\bs_0})^T \left( I_n+\bSigma_{\bs}\right)^{-1}
(\bmu_{\bs}-\bmu^t_{\bs_0})$ and

$A_{n,\bs}= \left( I_n+\bSigma_{\bs_0}^{t}\right) \left( I_n+\bSigma_{\bs}\right)^{-1} $,

$C_{n,\bs}= \left( I_n+\bSigma_{n,\bs_0}^{t}\right)^{1/2} \left( I_n+\bSigma_{n,\bs}\right)^{-1} \left( I_n+\bSigma_{n,\bs_0}^{t}\right)^{1/2}$. 
Therefore, 

{\centering
$\left(\by_n-\bmu_{\bs}\right)^T \left( I_n+\bSigma_{\bs}\right)^{-1}
\left(\by_n-\bmu_{\bs}\right) 
= \bz_n^T C_{n,\bs} \bz_n + \Delta_{n,\bs} \hspace{1 in}$

$\hspace{2.2 in}-2\left(\bmu_{\bs}-\bmu_{\bs_0}^{t}\right)^T\left(I_n+\Sigma_{\bs}\right)^{-1}\left(\by_n-\bmu_{\bs_0}^{t}\right).$
\par}

It follows that
\begin{eqnarray}
	&&\frac{1}{n\log n}\log BF^{n}_{\bs,\bs_0} \notag
	\\&&~= \frac{\log\left|C_{n,\bs}\right|}{2n\log n}- \frac{1}{\log n}\left(\frac{1-\alpha}{n}-\frac{1}{2}\right) 
	\left[ \log\left(\frac{\bz_n^T \bz_n+2\beta}{n^{1+2r\omega_{\bs}}}  \right) \right. \notag\\
	&& \left. - \log\left\{ \frac{ \Delta_{n,\bs}+2\beta+\bz_n^T C_{n,\bs} \bz_n}{n^{1+2r\omega_{\bs}}}   - 2\frac{\left(\bmu_{\bs}-\bmu_{\bs_0}^{t}\right)^T\left(I_n+\Sigma_{\bs}\right)^{-1}\left(\by_n-\bmu_{\bs_0}^{t}\right)}{n^{1+2r\omega_{\bs}}}  \right\} \right] \notag \\
	&& ~\leq  \frac{1}{2\log n}\log \left\{\frac{tr (C_{n,\bs})}{n} \right\}  - \frac{1}{\log n}\left(\frac{1-\alpha}{n}-\frac{1}{2}\right) 
	\left[ \log\left(\frac{ \bz_n^T \bz_n+2\beta}{n} \right)   \right. \notag\\
	&&\qquad \qquad  \left.-2r\omega_{\bs}\log n -\log\left\{ \frac{ \Delta_{n,\bs}+tr(C_{n,\bs})+2\beta}{n^{1+2r\omega_{\bs}}}  + \frac{ \bz_n^T C_{n,\bs} \bz_n  - tr(C_{n,\bs}) }{n^{1+2r\omega_{\bs}}} \right. \right. \notag\\
	&& \hspace{.75 in} \left. \left. 	+\frac{2}{n^{1+2r\omega_{\bs}}}\left(\bmu_{\bs}-\bmu_{\bs_0}^{t}\right)^T\left(I_n+\Sigma_{\bs}\right)^{-1}\left(\by_n-\bmu_{\bs_0}^{t}\right)  \right\} \right] , \label{eq:lnBF2}
\end{eqnarray}
where the last inequality is due to the log-sum inequality. 
We modify assumption (A1)--(A3) by replacing $\sigma_\epsilon^2$ by $1$, and term them $(A1^\prime)$--$(A3^{\prime})$.

Next observe the following facts:
\begin{enumerate}[(i)]
	\item $E\left[ \left( \bz_n^T C_{n,\bs} \bz_n  - tr(C_{n,\bs})\right)\right]^4= O(n^{2+8r\omega_{\bs}})$ implying that 

		$\left[\bz_n^T C_{n,\bs} \bz_n  - tr(C_{n,\bs}) \right]/n^{1+2r\omega_{\bs}} \xrightarrow{a.s.}0$. 
One can prove this in exactly similar way as done in Result \ref{theorem:power4}, using assumptions $(A1^{\prime})$--$(A3^{\prime})$.
 \item Similarly, it can be shown that $E\left[ \left( \bz_n^T \bz_n  - n\right)\right]^4= O(n^2)$ implying 
 
 $\bz_n^T \bz_n/n \xrightarrow{a.s.} 1$.
\item From the above fact, it follows that $\log \left( \bz_n^\prime \bz_n /n \right) \xrightarrow{a.s.} 0$ by continuous mapping theorem.
\item Applying $(A3^{\prime})$, it can be shown that 
	
	{\centering 
		$E\left[\left(\bmu_{\bs}-\bmu_{\bs_0}^{t}\right)^T\left(I_n+\Sigma_{\bs}\right)^{-1}\left(\by_n-\bmu_{\bs_0}^{t}\right)\right]^4= O(n^{2+8r\omega_{\bs}+2b})$,\par}

 which in turn implies
	 
		{\centering 	$\left(\bmu_{\bs}-\bmu_{\bs_0}^{t}\right)^T\left(I_n+\Sigma_{\bs}\right)^{-1}\left(\by_n-\bmu_{\bs_0}^{t}\right)/n^{1+2r\omega_{\bs}} \xrightarrow{a.s.} 0$.\par}

	\item Finally, $tr(C_{n,\bs})=tr(A_{n,\bs})\leq n\lambda_{\max}(A_{n,\bs})$, and $\lambda_{\max}(A_{n,\bs})=O(p^{2\omega_{\bs}})$.


 \end{enumerate}
Using the above facts, it is easy to see that the right hand side of (\ref{eq:lnBF2}) has $\lim\sup$ $-2r\omega_{\bs}$, which is negative for $\omega_{\bs}\in (0,1]$.

When $\omega_{\bs}=0$, similar steps as above would lead to the result 

$
\limsup_{n} \frac{1}{n}\log BF^{n}_{\bs,\bs_0} \stackrel{a.s.}{=}-\delta_{\bs}. 
$
%
%
%
%
%
Consequently, the following result holds:
\begin{theorem}
\label{theorem:as_conv2}
Consider the setup of Theorem \ref{theorem:as_conv} except that the error variance $\sigma^2_\epsilon$ is now unknown. 
Let an inverse gamma prior with parameters $\alpha$ and $\beta$ be applied to $\sigma_\epsilon^2$. Assume that 
	($A1^\prime$)--($A3^\prime$) hold for some $\omega_{\bs}\in (0,1]$, and
some positive constant $\delta_{\bs}$ depending upon $\bs$ ($\neq\bs_0$). Then
\begin{equation}
	\limsup_n \frac{1}{n\log n}\log\left(BF^{n}_{\bs,\bs_0}\right)\stackrel{a.s.}{=}-\delta_{\bs}.
\notag 
\end{equation}
	For $\omega_{\bs}=0$, the following holds:
\begin{equation}
	\limsup_n \frac{1}{n}\log\left(BF^{n}_{\bs,\bs_0}\right)\stackrel{a.s.}{=}-\delta_{\bs}.
\notag 
\end{equation}
	Moreover, if the number of models is finite then there exists $\delta>0$ such that
\begin{equation}
	\max_{\bs\neq\bs_0} \limsup_n \frac{1}{n}\log\left(BF^{n}_{\bs,\bs_0}\right)\stackrel{a.s.}{=}-\delta.
\notag 
\end{equation}
\end{theorem}

\section{Convergence of integrated Bayes factor}
\label{sec:bf_int}
Let us suppose, as is usual, that the Bayes factor $BF^n_{\bs,\bs_0}$ depends on a set of parameters and hyperparameters, denoted by $\bta$.
We denote the Bayes factor by $BF^n_{\bs,\bs_0}(\bta)$ instead of $BF^n_{\bs,\bs_0}$ to indicate it's dependence on $\bta$. If $\pi(\bta)$ is the prior for $\bta$, supported on $\bTheta$, then the integrated Bayes factor is given by
\begin{equation}
IBF^n_{\bs,\bs_0}=\int_{\bTheta}BF^n_{\bs,\bs_0}(\bta)\pi(\bta)d\bta.\notag
\end{equation}
The following convergence result provides conditions under which the integrated Bayes factor converges to zero almost surely.
\begin{theorem}
\label{theorem:ibf_conv}
	Assume ($A1$)--($A3$) (or, ($A1^\prime$)--($A3^\prime$)) hold for some $\omega_{\bs} \in [0,1]$ and $\delta_{\bs}>0$, and for each $\bta\in\bTheta$, and that $\bTheta$ is compact.
	Let $g(n)=n^{1+2r\omega_{\bs}}$ (in the case of ($A1$)--($A3$), for Theorem \ref{theorem:as_conv});
	or $g(n)=n\log n$ and $n$ for $\omega_{\bs}=0$ and $\omega_{\bs}\in (0,1]$, respectively, 
	(in the case of ($A1^\prime$)--($A3^\prime$), for Theorem \ref{theorem:as_conv2}). 
Also assume the following:
\begin{enumerate}[(i)]
	\item $\log\left(BF^{n}_{\bs,\bs_0}(\bta)\right)/g(n)$ is stochastically equicontinuous,
	\item $E\left[\log\left(BF^{n}_{\bs,\bs_0}(\bta)\right) /g(n) \right]$ is equicontinuous with respect to $\bta$ as $n\rightarrow\infty$, and
	\item The $\limsup$ and $\liminf$ of $E\left[\log\left(BF^{n}_{\bs,\bs_0}(\bta)\right)/g(n)\right]$ are upper and lower semicontinuous in $\bta$, respectively.
\end{enumerate}
	Then, there exists $\delta_{\bs}>0$ such that
\begin{equation}
	\limsup_n \frac{1}{g(n)}\log\left(IBF^{n}_{\bs,\bs_0}\right)\stackrel{a.s.}{=} -\delta_{\bs}.
\label{eq:ibf_conv2}
\end{equation}
\end{theorem}
\begin{proof}
As assumptions ($A1$)--($A3$) (or ($A1^\prime$)--($A3^\prime$)) hold by hypothesis, Theorem \ref{theorem:power4} (or Theorem \ref{theorem:as_conv2}) holds. 
While proving the theorem we have shown 
$$\frac{1}{g(n)}\log\left(BF^{n}_{\bs,\bs_0}(\bta)\right)-E\left[\frac{1}{g(n)}\log\left(BF^{n}_{\bs,\bs_0}(\bta)\right)\right]\stackrel{a.s.}{\longrightarrow}0 \quad \mbox{pointwise in } \bta\in\bTheta.$$
By conditions (i) and (ii) of Theorem \ref{theorem:ibf_conv}, the difference of the above two functions is stochastically equicontinuous.
Further, as $\bTheta$ is compact, by the stochastic Ascoli lemma
(see, e.g., \ctn{Newey91}),
	$$\underset{\bta\in\bTheta}{\sup}~\left|\frac{1}{g(n)}\log\left(BF^{n}_{\bs,\bs_0}(\bta)\right)
	-E\left[\frac{1}{g(n)}\log\left(BF^{n}_{\bs,\bs_0}(\bta)\right)\right]\right|
	\stackrel{a.s.}{\longrightarrow} 0, ~\mbox{as}~ n\rightarrow\infty.$$
In other words, given any data sequence, for any $\epsilon>0$, there exists $n_0(\epsilon)$ such that for $n\geq n_0(\epsilon)$,
\begin{equation}
\left|\frac{1}{g(n)}\log\left(BF^{n}_{\bs,\bs_0}(\bta)\right)
-E\left[\frac{1}{g(n)}\log\left(BF^{n}_{\bs,\bs_0}(\bta)\right)\right]\right|<\epsilon/2, \label{eq:ibf_conv_limsup1}
\end{equation}
for all $\bta\in\bTheta$.

	Let us now  define $\overline{\delta_{\bs}(\bta)}$ and $\underline{\delta_{\bs}(\bta)}$ such that
	
	 $-\overline{\delta_{\bs}(\bta)}=\limsup_n~E_{\bs_0}\left\{\log\left(BF^{n}_{\bs,\bs_0}(\bta)\right)/g(n)\right\}$ and
	 
	$-\underline{\delta_{\bs}(\bta)}=\liminf_n~E_{\bs_0}\left\{\log\left(BF^{n}_{\bs,\bs_0}(\bta)/g(n)\right)\right\}$, where
	$\overline{\delta_{\bs}(\bta)},\underline{\delta_{\bs}(\bta)}> 0$ for all $\bta\in\bTheta$. By our assumption,
	$\overline{\delta_{\bs}(\bta)}$ is upper semicontinuous in $\bta$ and $\underline{\delta_{\bs}(\bta)}$ is lower semicontinuous in $\bta$.

	Now, by compactness of $\bTheta$, we have $\bTheta\subset\cup_{i=1}^{m}\tilde\bTheta_i$, for some finite $m>0$, where $\tilde\bTheta_i$ are such that
	$\sup_{\bta_1,\bta_2\in\tilde\bTheta_i}\|\bta_1-\bta_2\|<\delta$. Here $\delta~(>0)$ is such that
	\begin{equation}
		\left|E_{\bs_0}\left\{\frac{1}{g(n)}\log\left(BF^{n}_{\bs,\bs_0}(\bta_1)\right)\right\}-E_{\bs_0}\left\{\frac{1}{g(n)}\log\left(BF^{n}_{\bs,\bs_0}(\bta_2)\right)\right\}\right|
		<\frac{\epsilon}{6},
		\label{eq:equicon1}
	\end{equation}
	for large $n$, due to equicontinuity.
	 Now, for any $\bta\in\bTheta$, $\bta$ must lie in $\tilde\bTheta_i$ for some $i=1,2,\ldots,m$.
	Let $\bta_i\in\tilde\bTheta_i$, for $i=1,\ldots,m$.
	Then, let us write
\begin{align}
&	E_{\bs_0}\left\{\frac{1}{g(n)}\log\left(BF^{n}_{\bs,\bs_0}(\bta)\right)\right\}+\overline{\delta_{\bs}(\bta)}\notag \\
	&=\left[E_{\bs_0}\left\{\frac{1}{g(n)}\log\left(BF^{n}_{\bs,\bs_0}(\bta)\right)\right\}-E_{\bs_0}\left\{\frac{1}{g(n)}\log\left(BF^{n}_{\bs,\bs_0}(\bta_i)\right)\right\}\right]\notag \\	
	&\quad + \left[E_{\bs_0}\left\{\frac{1}{g(n)}\log\left(BF^{n}_{\bs,\bs_0}(\bta_i)\right)\right\}+\overline{\delta_{\bs}(\bta_i)}\right]-\left(\overline{\delta_{\bs}(\bta_i)}-\overline{\delta_{\bs}(\bta)}\right).
\label{eq:ibf_conv_limsup2}
\end{align}
	The first term on the right hand side of of (\ref{eq:ibf_conv_limsup2}) is less than $\epsilon/6$ due to (\ref{eq:equicon1}),
	since both $\bta,\bta_i\in\tilde\bTheta_i$.
	The second term on the right hand side of of (\ref{eq:ibf_conv_limsup2}) is less than $\epsilon/6$ for large enough $n$ by definition of $\limsup$.
	Since $m$ is finite, the requisite $n_1(\epsilon)$ that $n$ needs to exceed, remains finite for all values of $\bta$.
	The third term is less than $\epsilon/6$ by definition of upper semicontinuity, given that $\bta,\bta_i\in\tilde\bTheta_i$. In other words, for all $\bta\in\bTheta$,
	there exists $n_1(\epsilon)$, such that $n\geq n_1(\epsilon)$,
	\begin{equation}
		E_{\bs_0}\left\{\frac{1}{g(n)}\log\left(BF^{n}_{\bs,\bs_0}(\bta)\right)\right\}+\overline{\delta_{\bs}(\bta)}<\frac{\epsilon}{2}.
\notag	
	\end{equation}
	Similarly, using the definition of equicontinuity, $\liminf$ and lower semicontinuity, it follows that there exists $n_2(\epsilon)\geq 1$ for all $\bta\in\bTheta$
	such that for $n\geq n_2(\epsilon)$,
	\begin{equation}
		E_{\bs_0}\left\{\frac{1}{g(n)}\log\left(BF^{n}_{\bs,\bs_0}(\bta)\right)\right\}+\underline{\delta_{\bs}(\bta)}>-\frac{\epsilon}{2}.
\notag 
	\end{equation}
From (\ref{eq:ibf_conv_limsup1}) and the above facts, we see that for $n\geq n_3(\epsilon)=\max\{ n_1(\epsilon),n_2(\epsilon)\}$, and all $\bta\in\bTheta$,
	\begin{eqnarray*}
&		-\underline{\delta_{\bs}(\bta)}-\epsilon
		<\frac{1}{g(n)}\log\left(BF^{n}_{\bs,\bs_0}(\bta)\right)<-\overline{\delta_{\bs}(\bta)}+\epsilon,\\
\implies &		\exp\left\{-g(n)(\epsilon+\underline{\delta_{\bs}(\bta)})\right\}<BF^{n}_{\bs,\bs_0}(\bta)<\exp\left\{g(n)(\epsilon-\overline{\delta_{\bs}(\bta)})\right\}. \notag 
	\end{eqnarray*}
	Integrating the above with respect to $\pi(\bta)d\bta$, and taking $g(n)^{-1}\log$ we obtain,
	\begin{equation}
		-\epsilon+\frac{1}{g(n)}\log\underline{I}_n<\frac{1}{g(n)}\log\left(IBF^{n}_{\bs,\bs_0}\right)<\epsilon+\frac{1}{g(n)}\log\overline{I}_n,
\notag 
	\end{equation}
	where $\overline I_n=\int_{\bTheta}\exp\left(-g(n)\overline{\delta_{\bs}(\bta)}\right)\pi(\bta)d\bta$,
	$\underline I_n=\int_{\bTheta}\exp\left(-g(n)\underline{\delta_{\bs}(\bta)}\right)\pi(\bta)d\bta$. Since both $\overline I_n$ and $\underline I_n$ are less than one, the statement in (\ref{eq:ibf_conv2}) holds. \qed
\end{proof}
\begin{remark}
	Note that a sufficient condition for stochastic equicontinuity of $\log\left(BF^{n}_{\bs,\bs_0}(\bta)\right)/g(n)$ is almost sure Lipschitz continuity of the same, with a bounded Lipschitz constant,
	as $n\rightarrow\infty$. Similarly, a sufficient condition of equicontinuity of $E_{\bs_0}\left[\log\left(BF^{n}_{\bs,\bs_0}(\bta)\right)/g(n)\right]$ is Lipschitz continuity. Again, Lipschitz continuity is ensured by boundedness of the partial derivatives. Hence, if
	the partial derivatives of $\log\left(BF^{n}_{\bs,\bs_0}(\bta)\right)/g(n)$ and its expectation with respect to the components of $\bta$
	exist and are almost surely bounded for large $n$, then Lipschitz continuity would follow. This would also imply the semicontinuity assumptions on $E_{\bs_0}\left\{\log\left(BF^{n}_{\bs,\bs_0}(\bta)\right)/g(n)\right\}$. In our applications, we shall often make use of this sufficient condition.
\end{remark}

\begin{remark}
	Note that Theorem \ref{theorem:ibf_conv} is applicable to Gaussian process regression setup where the error variance $\sigma^2_{\epsilon}$, the 
	process variance $\sigma^2_f$, or the diagonal elements of $D_{\bs}$ are unknown. The relevant priors, however, need to have compact supports.
	Although for $\sigma^2_{\epsilon}$ and $\sigma^2_f$ compactly supported prior is not necessary for proving convergence of Bayes factor 
	(as we have shown consistency under an inverse-gamma prior setup with $\sigma^2_{\epsilon}=\sigma^2_f$), but very general priors, 
	albeit with compact supports, can be envisaged for these unknown quantities, without any loss of generality of convergence
	result for the corresponding integrated Bayes factor. In real problems, some other parameters may be assigned compactly supported priors, while
	the inverse-gamma prior may be allotted to the variance parameters.
	
	For illustration of the method for verifying the conditions of Theorem \ref{theorem:ibf_conv},
	in Section \ref{subsec:time_series1} we consider the case of variable selection in an autoregressive regression model with unknown autoregressive parameter.
\end{remark}

\section{Bayes factor asymptotics for correlated errors}
\label{sec:correlated_errors}
So far we assumed $\epsilon_i\stackrel{iid}{\sim}N(0,\sigma^2_{\epsilon})$. However, correlated errors play significant roles in time series models.
Indeed, except some simple cases, \emph{i.i.d.} errors will not be appropriate for such models. For instance, the problem of time-varying covariate selection
in the AR(1) model $y_t=\rho_0y_{t-1}+\sum_{i=0}^{|\bs|}\beta_ix_{it}+\epsilon_t$, $t=1,2,\ldots$, where $\epsilon_t\stackrel{iid}{\sim}N(0,\sigma^2_{\epsilon})$
and $\rho_0$ is known, admits the same treatment as in linear regression considered in Section \ref{sec:linear_regression} by treating
$z_t=y_t-\rho_0y_{t-1}$ as the response. However if $\rho_0$ is unknown, such simple method is untenable.

In general, we must allow correlated errors, that is, for $\bepsilon_n=(\epsilon_1,\ldots,\epsilon_n)^T\sim N_n\left(\bzero,\sigma^2_{\epsilon}\tilde\Sigma_n\right)$,
the zero-mean normal distribution with covariance matrix $\sigma^2_{\epsilon}\tilde\Sigma_n$. Let the correlation matrix under the true model be
$\tilde\Sigma^t_n$. With these, we then replace the previous notions $\sigma^2_{\epsilon}I_n+\Sigma_{\bs}$ and $\sigma^2_{\epsilon}I_n+\Sigma^t_{\bs_0}$
by $\sigma^2_{\epsilon}\tilde\Sigma_n+\Sigma_{\bs}$ and $\sigma^2_{\epsilon}\tilde\Sigma^t_n+\Sigma^t_{\bs_0}$, respectively, and prove similar results with the 
assumptions on $A_{n,\bs}$ and $\Delta_{n,\bs}$, where
 $A_{n,\bs}=\left(\sigma^2_{\epsilon}\tilde\Sigma^t_n+\Sigma^t_{\bs_0}\right)\left(\sigma^2_{\epsilon}\tilde\Sigma_n+\Sigma_{\bs}\right)^{-1}$, and $\Delta_{n,\bs}=(\bmu_{\bs}-\bmu^t_{\bs_0})^T
\left(\sigma^2_{\epsilon}\tilde\Sigma_n+\Sigma_{\bs}\right)^{-1}(\bmu_{\bs}-\bmu^t_{\bs_0})$.

\subsection{Illustration 3: Autoregressive model}
\label{subsec:time_series1}

Let us consider the time-varying covariate selection problem in the following $AR(1)$ model.
Let
\begin{align}
y_t=\rho y_{t-1}+\bbeta_{\bs}^{\prime} \bx_{t,\bs}+\epsilon_t,\quad \mbox{and} ~~
\epsilon_t \stackrel{iid}{\sim}N\left(0,\sigma^2_{\epsilon}\right),\quad \mbox{for}~~ t=1,\ldots,n.
\label{eq:ar1_model}
\end{align}
where $y_0\equiv 0$ and $|\rho|<1$.
The above model admits the following representation
\begin{equation*}
y_t=\bbeta_{\bs}^{\prime} \bz_{t,\bs}+\tilde\epsilon_t,
\quad \mbox{where}~~ \bz_{t,\bs}=\sum_{k=1}^t\rho^{t-k}\bx_{k,\bs} \quad \mbox{ and}~
~\tilde\epsilon_t=\sum_{k=1}^t\rho^{t-k}\epsilon_k.
\end{equation*}
Thus, $\tilde\epsilon_t$ is an asymptotically stationary zero mean Gaussian process with covariance 
\begin{equation}
Cov\left(\tilde\epsilon_{t+h},\tilde\epsilon_t\right)\sim\frac{\sigma^2_{\epsilon}\rho^h}{1-\rho^2},~\mbox{where}~h\geq 0.
\label{eq:cov_ar1}
\end{equation}
Let the true model be of the same form as above but with $\rho$ and $\bs$ replaced by $\rho_0$ and $\bs_0$, respectively, where $|\rho_0|<1$. As in the linear regression case we allow $p=O(n^r)$ covariates, with $r>0$, and $\bs_{0}\subseteq\bS=\{1,\ldots,p\}$.
 
Let
$\bbeta_{\bs}\sim N\left(\bbeta_{0,\bs},g_n \sigma^2_{\bbeta}\left( Z_{\bs}^{\prime} Z_{\bs}\right)^{-1} \right)$, where $Z_{\bs}$ is the design matrix
associated with $\bz_{t,\bs}$; $t=1,\ldots,n$, and $g_n=O\left(1\right)$.
This is again Zellner's $g$ prior, but modified to suit the $AR(1)$ setup. 
As before, $\bbeta_{0,\bs}$ is so chosen that $\|\bbeta_{0,\bs}\|_{L_1}=\sum_{=1}^{|\bs|}|\beta_{0,j}|=O(|\bs|)$.
We also assume compactness of the covariate space and that
the set of covariates $\left\{x_j:j\in\bS\right\}$ is non-zero. Let $\pi(\rho)$ be any prior for $\rho$ supported on $[-1+\gamma,1-\gamma]$ for some small enough $\gamma>0$. The reason for choosing this
support will become clear as we proceed.

The conditional expectation of $\bbeta_{\bs}^{\prime}\bz_{t,\bs}$ given $\rho$, for $t=1,\ldots,n$, is
\begin{equation*}
\mu\left(\bz_{t,\bs}\right)=\bbeta_{0,\bs}^{\prime}\bz_{t,\bs},
\end{equation*}
and the covariance between $\bbeta_{\bs}^T\bz_{i,\bs}$ and $\bbeta_{\bs}^T\bz_{j,\bs}$ given $\rho$ is
\begin{align*}
Cov\left(\bbeta_{\bs}^{\prime}\bz_{i,\bs},\bbeta_{\bs}^{\prime}\bz_{j,\bs}\right)
&=\sigma^2_{\bbeta}g_n \bz^{\prime}_{i,\bs}\left(Z_{\bs}^{\prime}Z_{\bs}\right)^{-1}\bz_{j,\bs}.
\end{align*}
Let $\Sigma_{\epsilon}$ be the AR(1) correlation matrix $((\rho^{h}))$, $\sigma^2_{\epsilon}\tilde\Sigma_n$ be the covariance matrix of $\tilde{\boldsymbol{\epsilon}}$ as given in (\ref{eq:cov_ar1}), i.e., $\tilde\Sigma_n=(1-\rho^2)^{-1}\Sigma_\epsilon$, $H_{n,\bs}:=\left(\sigma^2_{\epsilon}\tilde\Sigma_n+\sigma^2_{\bbeta}g_nP_{n,\bs}\right)$ and $H_{n,\bs_0}:=\sigma^2_{\epsilon}\tilde\Sigma_n+\sigma^2_{\bbeta}g_nP_{n,\bs_0}$, where $P_{n,\bs}$ is the projection matrix onto the column space of $Z_{\bs}$. Then $A_{n,\bs}=H_{n,\bs_0}H_{n,\bs}^{-1}$.

We first verify (A1)--(A3) in this setup. 
For verification of (A3), note that
\begin{eqnarray*}
	\left\|\bmu_{0,\bs}-\bmu^t_{0,\bs_0} \right\|^2 =\left\|Z_{\bs}\bbeta_{0,\bs}- Z_{\bs_0}\bbeta_{0,\bs_0} \right\|^2 
	\leq 2 \left( \left\| Z_{\bs}\bbeta_{0,\bs} \right\|^2+ \left\| Z_{\bs_0}\bbeta_{0,\bs_0} \right\|^2 \right).
\end{eqnarray*}
Now, by our assumptions, for any $\bs$, $\|\bbeta_{0,\bs}\|_{L_1}=O(|\bs|)$. We further assume that
$\max\{|\bs|,|\bs_{0}|\}=O(p^{2\omega_{\bs}})$, for $0\leq \omega_{\bs}\leq 1$. 
Also, since the covariates lie on a compact space and  
$|\rho|$ is less than one, it follows that
$\left\| Z_{\bs}\bbeta_{0,\bs} \right\|^2=\sum_{t=1}^n\left(\bz^T_{t,\bs}\bbeta_{0,\bs}\right)^2=O(np^{2\omega_{\bs}})=O(n^{1+2r\omega_{\bs}})$.
Similarly, since $|\rho_0|<1$, $\left\| Z_{\bs_0}\bbeta_{0,\bs} \right\|^2=O(n^{1+2r\omega_{\bs}})$.
Thus ($A3$) holds.

Next we verify ($A2$). Note that by Lemma \ref{lm_3} (in Section \ref{sec:appendix} of the supplement) the eigenvalues of $\Sigma_{\epsilon}/(1-\rho^2)$ have strictly positive lower and upper bounds, independent of
$n$ if $\rho\in [-1+\gamma,1-\gamma]$. Further, the eigenvalues of $P_{n,\bs}$ are either 0 or 1. Thus by Result \ref{eqn:symmetric2}, 
$\lambda_{\max}(A_{n,\bs})=O(p^{\omega_{\bs}})$.


Assuming, as before, that $\liminf_n n^{-1-2r\omega_{\bs}}\|\bmu_{\bs}-\bmu_{\bs_0}^{t}\|^2>0$, it is seen that ($A1$) also holds.
%
%
Thus, ($A1$)--($A3$) holds.

Next we verify conditions (i)--(iii) of Theorem \ref{theorem:ibf_conv}.
Note that
\begin{align}
	&\frac{\partial}{\partial\rho}\left(\frac{1}{n^{1+2r\omega_{\bs}}}\log BF^n_{\bs,\bs_0}(\rho)\right)\notag \\
	&= -\frac{1}{2n^{1+2r\omega_{\bs}}}tr\left[H_{n,\bs}^{-1}
	\frac{\partial}{\partial\rho}\left(H_{n,\bs}\right)\right] +\frac{1}{n^{1+2r\omega_{\bs}}}\left(\frac{\partial\bmu_{\bs}}{\partial\rho}\right)^TH_{n,\bs}^{-1}
	(\by_n-\bmu_{\bs})	\notag\\
	& \qquad
	+\frac{1}{2n^{1+2r\omega_{\bs}}}(\by_n-\bmu_{\bs})^TH_{n,\bs}^{-1} \frac{\partial}{\partial\rho}\left(H_{n,\bs}\right)H_{n,\bs}^{-1}(\by_n-\bmu_{\bs})	.
	\label{eq:ar1_ibf1}
\end{align}
Consider the first term of (\ref{eq:ar1_ibf1}).  
By Lemma \ref{lm_3} $H_{n,\bs}$ has positive and bounded eigenvalues.
Define $D_{n,\bs}=\displaystyle\frac{\partial}{\partial\rho}H_{n,\bs}$, and
note that
\begin{equation}
D_{n,\bs}=\frac{\sigma^2_{\epsilon}}{1-\rho^2}A_n(\rho)+\frac{2\sigma^2_{\epsilon}\rho}{(1-\rho^2)^2}{\Sigma}_{\epsilon}+\sigma^2_{\bbeta}g_n\frac{\partial}{\partial\rho}P_{n,\bs}, \label{eqn_11} 
\end{equation}
 where $A_n(\rho)=((a_{i,j}))$ is defined by $a_{i,j}=|i-j|\rho^{|i-j|-1}$.
We will show that $D_{n,\bs}$ has finite eigenvalues. 
From Lemma \ref{lm_3} and Lemma \ref{lm_2}, and the fact that $\rho\in [-1+\gamma,1-\gamma]$, it is evident that the 2nd and 3rd matrices in the RHS of (\ref{eqn_11}) have bounded eigenvalues if $g_n$ is bounded. 
As both the matrices are symmetric, it follows from Result \ref{eqn:symmetric2} that the sum of these two matrices have finite eigenvalues.
From Gerschgorin's circle theorem, $\lambda_{\max}\left(A_n(\rho)\right)\leq \max_{j} R_{[j]}$, and $\lambda_{\min}\left(A_n(\rho)\right)\geq -\max_{j} R_{[j]}$ where $R_{[j]}$ is the sum of the absolute values of the non-diagonal entries in the $[j]$-th row of $A_{n}(\rho)$ and $[j]$ is the highest integer less than or equal to $j$. Little algebra shows that $\max_{j} R_{[j]}=R_{[n/2]}=2\left\{1-|\rho|^{[n/2]} -[n/2] |\rho|^{[n/2]}(1-|\rho|)\right\}(1-|\rho|)^{-2}$. As $n$ is large $(1-|\rho|)^{-2}<R_{[n/2]}<2(1-|\rho|)^{-2}$, which implies that the eigenvalues of the 1st matrix of RHS of (\ref{eqn_11}) are bounded. Thus, $D_{n,\bs}$ has bounded eigenvalues by Result \ref{eqn:symmetric2}. 

Let $\alpha_0>0$ be such that $\lambda_{\min}(D_{n,\bs})>-\alpha_0$. Then $D_{n,\bs}+\alpha_0 I$ is a symmetric positive definite matrix.
Hence, the absolute value on first term of (\ref{eq:ar1_ibf1}) is
\begin{eqnarray}
	\left|\frac{1}{2n^{1+2r\omega_{\bs}}}tr\left(H_{n,\bs}^{-1}
		D_{n,\bs}\right)\right|&=& \left|\frac{1}{2n^{1+2r\omega_{\bs}}}tr\left[H_{n,\bs}^{-1}
		(D_{n,\bs}+\alpha_0I)-\alpha_0 H_{n,\bs}^{-1} \right]\right|\notag\\
	&\leq& \frac{1}{2n^{1+2r\omega_{\bs}}}\left[\left|trH_{n,\bs}^{-1}
(D_{n,\bs}+\alpha_0I)\right|+\alpha_0 \left|trH_{n,\bs}^{-1} \right|\right]\notag\\
		&\leq&\frac{1}{2n^{1+2r\omega_{\bs}}}\lambda_1\left(H_{n,\bs}^{-1}\right)
	tr\left(D_{n,\bs}+\alpha_0 I\right) \notag \\
	&& \qquad \qquad +\frac{\alpha_0}{2n^{1+2r\omega_{\bs}}} tr\left(H_{n,\bs}^{-1}\right) =O(1). \notag
\end{eqnarray}
The last equality holds as the eigenvalues of $H_{n,\bs}$ are positive and bounded, that of $D_{n,\bs}$ are bounded, and $\alpha_0$ is finite.


Next consider the third term of (\ref{eq:ar1_ibf1}). Let $H_{n,\bs}^{-1}
	(\by_n-\bmu_{\bs})={\bf u}_{\bs}$, then this term is ${\bf u}_{n,\bs}^T D_{n,\bs}{\bf u}_{n,\bs}/2n^{1+2r\omega_{\bs}}$. 
Using Result \ref{eqn:symmetric2} we argue that the third term of (\ref{eq:ar1_ibf1}) is lower bounded by $\lambda_{\min}(D_{n,\bs})\|{\bf u}_{n,\bs}\|^2/2n^{1+2r\omega_{\bs}}$ 
and upper bounded by $\lambda_{\max}(D_{n,\bs})\|{\bf u}_{n,\bs}\|^2/2n^{1+2r\omega_{\bs}}$.	
Using Result \ref{eqn:symmetric2}, it can also be shown that $\|{\bf u}_{n,\bs}\|^2$ is bounded by 
$\lambda_{\max}^{-2}( H_{n,\bs})\|\by_n-\bmu_{\bs} \|^2$ and $\lambda_{\min}^{-2}( H_{n,\bs})\|\by_n-\bmu_{\bs} \|^2$.

\noindent Next, we write $\by_n-\bmu_{\bs_0}=H_{n,\bs_0}^{1/2}\tilde\bz_n$,
where $\tilde\bz_n\sim N\left(\bzero,I_n\right)$. It then follows that
\begin{eqnarray*}
&&\|\by_n-\bmu_{\bs}\|^2\\
&&=\|\by_n-\bmu_{\bs_0}^{t}\|^2+\|\bmu_{\bs_0}^{t}-\bmu_{\bs}\|^2+2(\by_n-\bmu_{\bs})^T (\bmu_{\bs_0}^{t}-\bmu_{\bs}) \\
&&\leq \tilde\bz_n^T H_{n,\bs_0} \tilde\bz_n+2\|\bmu_{\bs_0}^{t}-\bmu_{\bs}\|\sqrt{\tilde\bz_n^T H_{n,\bs_0} \tilde\bz_n}+\|\bmu_{\bs_0}^{t}-\bmu_{\bs}\|^2\\
&& \leq \lambda_{\max} (H_{n,\bs_0}) \left\|\tilde\bz_n\right\|^2+2\|\bmu_{\bs_0}^{t}-\bmu_{\bs}\| \lambda^{1/2}_{\max} (H_{n,\bs_0}) \left\|\tilde\bz_n\right\|+\|\bmu_{\bs_0}^{t}-\bmu_{\bs}\|^2.
\end{eqnarray*}
Combining the facts that $\lambda_{\max}(H_{n,\bs_0})$ is bounded, $\tilde\bz_n^T\tilde\bz_n/n\rightarrow 1$ almost surely as $n\rightarrow\infty$,
$\|\bmu_{\bs}-\bmu_{\bs_0}\|^2/n^{1+2r\omega_{\bs}}=O(1)$, and since $|\rho_0|,|\rho|<1-\gamma$, almost surely,
it follows that $\left\|\by_n-\bmu_{\bs}\right\|^2/n^{1+2r\omega_{\bs}}=O(1)$, almost surely. 
In other words, the third term of (\ref{eq:ar1_ibf1}) is $O(1)$ almost surely, as $n\rightarrow\infty$.

\vskip10pt
For the second term of (\ref{eq:ar1_ibf1}), note that
\begin{eqnarray}
	\left|\left(\frac{d\bmu_{\bs}}{d\rho}\right)^TH_{n,\bs}^{-1}~
	(\by_n-\bmu_{\bs})\right|
	 \leq \sqrt{\left(\frac{d\bmu_{\bs}}{d\rho}\right)^T 
	\left(\frac{d\bmu_{\bs}}{d\rho}\right)}\times \left\| {\bf u}_{\bs}\right\|. \notag
\end{eqnarray}
Note that
\begin{eqnarray*}
 \frac{\partial}{\partial\rho}\bmu_{\bs}=\frac{\partial}{\partial\rho} \left( \bbeta_{0,\bs}^T \bz_{1,\bs}, \ldots , \bbeta_{0,\bs}^T\bz_{n,\bs} \right)^T 
\end{eqnarray*}
Thus,
\begin{eqnarray*}
 \left(\frac{\partial}{\partial\rho}\bmu_{\bs}\right)^T \left( \frac{\partial}{\partial\rho}\bmu_{\bs}\right)&=&\sum_{t=1}^{n}\left(\frac{\partial}{\partial\rho}\bbeta_{0,\bs}^T \bz_{t,\bs}\right)^2=\sum_{t=1}^{n}\left(\frac{\partial}{\partial\rho}\bbeta_{0,\bs}^T \sum_{k=1}^{t}\rho^{t-k}\bx_{k,\bs}\right)^2\\
 &=&\sum_{t=1}^{n}\left\{\sum_{k=1}^{t-1}(t-k)\rho^{t-k-1}\bbeta_{0,\bs}^T\bx_{k,\bs}\right\}^2\\ 
 &\leq& M_{n} \sum_{t=1}^{n}\left\{\sum_{k=1}^{t-1}(t-k)\rho^{t-k-1}\right\}^2,\\
 &\leq&  M_{n} \sum_{t=1}^{n}\left\{ \frac{1-\rho^{t-1}(t-t\rho+\rho)}{(1-\rho)^2}\right\}^2.
\end{eqnarray*}
for some appropriate $M_{n}=O(p^{2\omega_{\bs}})$ as $\bx_{k,\bs}$ is uniformly bounded for all $k$, and $\|\bbeta_{0,\bs}\|^2=O(\|\bs\|^{2})=O(p^{2\omega_{\bs}})$. 
As $|\rho|<1-\gamma$, the last expression is $O(n)$.
Moreover, as $\lambda_{\max}\left[H_{n,\bs}^{-2}\right]$ is bounded and $\|\by_n-\bmu_{\bs}\|^2/n^{1+2r\omega_{\bs}}$ is $O(1)$ almost surely, 
it follows that the second term of (\ref{eq:ar1_ibf1}) is $O(1)$ almost surely.

In other words, all the three terms of (\ref{eq:ar1_ibf1}) are $O(1)$ almost surely, as $n\rightarrow\infty$. That is,
almost surely, as $n\rightarrow\infty$,
\begin{equation}
	\frac{d}{d\rho}\left(\frac{1}{n^{1+2r\omega_{\bs}}}\log BF^n_{\bs,\bs_0}(\rho)\right)=O(1).
	\label{eq:ar1_ibf9}
\end{equation}
Thus, for any given data sequence in the relevant non-null set, the function $\log BF^n_{\bs,\bs_0}(\rho)/n^{1+2r\omega_{\bs}}$ is Lipschitz continuous in $\rho$.
Importantly, (\ref{eq:ar1_ibf9}) shows that there exists $n_0\geq 1$, such that for $n\geq n_0$, the Lipschitz constant for $\log BF^n_{\bs,\bs_0}(\rho)/n^{1+2r\omega_{\bs}}$
remains the same.
In the same way, it can be shown that $E_{\bs_0}\left[\log BF^n_{\bs,\bs_0}(\rho)/n^{1+2r\omega_{\bs}}\right]$ is also Lipschitz in $\rho$, with bounded Lipschitz constant,
as $n\rightarrow\infty$.


Further, assuming that the $\limsup$ and $\liminf$ of $E_{\bs_0}\left[\log BF^n_{\bs,\bs_0}(\rho)/n^{1+2r\omega_{\bs}}\right]$
are upper and lower semicontinuous, respectively, and appealing to Theorem \ref{theorem:ibf_conv}, we see that
(\ref{eq:ibf_conv2}) holds.

We summarize this in the form of the following theorem.

\begin{theorem}
\label{theorem:ar1}
Consider the model selection problem in the $AR(1)$ model (\ref{eq:ar1_model}) with  $p=O\left(n^r\right)$, with $r>0$. Suppose a prior
$\pi$ supported on $[-1+\gamma,1-\gamma]$ is assigned on $\rho$, and 
 $\rho_0$ is the true value of $\rho$, where $|\rho_0|<1-\gamma$, for some $\gamma>0$.
Let $\bs_0, ~\bs~(\subseteq\bS=\{1,2,\ldots,p\})$ be the set of
	indices of the true set of covariates, and a competing model. Assume that $\max\left\{|\bs_{0}|,|\bs|\right\}=O(p^{\omega_{\bs}})$, for some $0\leq \omega_{\bs}\leq 1$. 
Let
	$\bbeta_{\bs}\sim N\left(\bbeta_{0,\bs},g_n \sigma^2_{\bbeta}\left( Z_{\bs}^{\prime} Z_{\bs}\right)^{-1} \right)$, where $g_n=O\left(1\right)$, 
	and $\|\bbeta_{0,\bs}\|_{L_1}=O(p^{\omega_{\bs}})$.
If the space of covariates is compact, and the set of covariates $\left\{x_j:j\in\bS\right\}$ is non-zero,
	then provided that the $\limsup$ and $\liminf$ of $E_{\bs_0}\left[\log BF^n_{\bs,\bs_0}(\rho)/n^{1+2r\omega_{\bs}}\right]$ 
	are upper and lower semicontinuous, respectively,
(\ref{eq:ibf_conv2}) holds.
\end{theorem}

Note that for simplicity we have assumed $\sigma^2_{\epsilon}$ to be known in the proof of Theorem \ref{theorem:ar1}. However, as the following corollary
shows, this is not necessary.
\begin{corollary}
\label{cor:ar1}
	Due to Theorem \ref{theorem:as_conv2}, the result of Theorem \ref{theorem:ar1} continues to hold with $n^{1+2r\omega_{\bs}}$ replaced with $n\log n$ if we set
$\sigma^2_{\beta}=\sigma^2_{\epsilon}$ and assign the conjugate inverse-gamma prior (\ref{eq:invgamma}) to $\sigma^2_{\epsilon}$.
\end{corollary}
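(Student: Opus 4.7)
The plan is to essentially replay the argument underlying Theorem \ref{theorem:ar1}, but replacing the known-variance Bayes factor convergence result with Theorem \ref{theorem:as_conv2}, and then appealing to Theorem \ref{theorem:ibf_conv} to handle integration over $\rho$. Since $\sigma^2_\bbeta$ is now tied to $\sigma^2_\epsilon$ and the latter is marginalized out under the inverse-gamma prior (\ref{eq:invgamma}), the Bayes factor for fixed $\rho$ takes the form (\ref{eq_6}) with $I_n$ replaced by $\tilde\Sigma_n$ per the correlated-errors modification of Section \ref{sec:correlated_errors}.

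First I would verify the modified assumptions $(A1')$--$(A3')$ pointwise for each $\rho\in[-1+\gamma,1-\gamma]$. Assumption $(A3')$ follows exactly as in the proof of Theorem \ref{theorem:ar1}, using compactness of the covariate space and $|\rho_0|,|\rho|<1-\gamma$. For $(A2')$, Lemma \ref{lm_3} gives bounds on the eigenvalues of $\tilde\Sigma_n=(1-\rho^2)^{-1}\Sigma_\epsilon$ that are strictly positive and finite uniformly in $n$ for $\rho$ in the compact support, so by Result \ref{eqn:symmetric2} together with the $0/1$ eigenvalue structure of $P_{n,\bs}$ we get $\lambda_{\max}(A_{n,\bs})=O(1)$. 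For $(A1')$, the same mean-separation hypothesis $\liminf_n n^{-1}\|\bmu_\bs-\bmu_{\bs_0}\|^2>0$ used in Theorem \ref{theorem:ar1} suffices, via (\ref{eq:rkhs_A1}). Theorem \ref{theorem:as_conv2} then gives, for each fixed $\rho$, almost sure exponential decay of $BF^n_{\bs,\bs_0}(\rho)$ at some positive rate $\delta_\bs(\rho)$.

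Next I would invoke Theorem \ref{theorem:ibf_conv} with $\bta=\rho$ and $\bTheta=[-1+\gamma,1-\gamma]$. The space is compact, so it remains to check the three equicontinuity/semicontinuity conditions. Following Remark \ref{remark:1} (the remark after Theorem \ref{theorem:ibf_conv}), it suffices to bound $\partial/\partial\rho$ of $n^{-1}\log BF^n_{\bs,\bs_0}(\rho)$ and of its expectation almost surely (resp.\ uniformly) for large $n$. Writing the log Bayes factor from (\ref{eq_6}), its $\rho$-derivative splits into (a) a derivative of the log-determinant factor, handled by Jacobi's formula as $\tfrac12\mathrm{tr}(H_{n,\bs}^{-1}D_{n,\bs})$ minus the $\bs_0$-counterpart; and (b) a derivative of a log ratio of quadratic forms plus $2\beta$. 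Part (a) is $O(1)$ by exactly the eigenvalue arguments used in Theorem \ref{theorem:ar1}'s proof (applied here with $\sigma^2_\epsilon$ absorbed): $H_{n,\bs}$ and $D_{n,\bs}$ have bounded spectra by Lemmas \ref{lm_3} and \ref{lm_2} together with Gerschgorin's bound on $A_n(\rho)$.

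The main obstacle will be part (b): the logarithmic derivatives of the quadratic forms $(\by_n-\bmu_\bs)^T(\tilde\Sigma_n+\Sigma_\bs)^{-1}(\by_n-\bmu_\bs)+2\beta$. These take the form of a ratio whose denominator I must bound below away from zero almost surely for large $n$. The denominator divided by $n$ is bounded below by $\lambda_{\max}(\tilde\Sigma_n+\Sigma_\bs)^{-1}\cdot n^{-1}\|\by_n-\bmu_\bs\|^2$ plus a positive constant, and $n^{-1}\|\by_n-\bmu_\bs\|^2=O(1)$ almost surely by the same Gaussian concentration argument used in the proof of Theorem \ref{theorem:ar1} (since $\by_n-\bmu_{\bs_0}=H_{n,\bs_0}^{1/2}\tilde\bz_n$ with $\tilde\bz_n\sim N(\bzero,I_n)$, $\lambda_{\max}(H_{n,\bs_0})$ bounded, and $\|\bmu_{\bs_0}-\bmu_\bs\|^2/n=O(1)$). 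The numerator of the derivative is controlled by the same eigenvalue bounds as part (a), combined with the bound $\|\by_n-\bmu_\bs\|^2=O(n)$. Hence $n^{-1}\partial\log BF^n_{\bs,\bs_0}(\rho)/\partial\rho=O(1)$ almost surely and in expectation, giving Lipschitz continuity with a bounded Lipschitz constant for large $n$. This yields conditions (i)--(ii) of Theorem \ref{theorem:ibf_conv}; condition (iii) is assumed as in Theorem \ref{theorem:ar1}. Applying Theorem \ref{theorem:ibf_conv} then delivers (\ref{eq:ibf_conv2}).
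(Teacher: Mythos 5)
Your proposal is correct and follows the route the paper intends: the paper offers no explicit proof of this corollary beyond the citation of Theorem \ref{theorem:as_conv2}, the idea being precisely that the pointwise-in-$\rho$ almost sure convergence is replaced by the unknown-variance result and the equicontinuity machinery of Theorem \ref{theorem:ibf_conv} is reused. Your plan fills in the details the paper leaves implicit — in particular the re-derivation of the Lipschitz bound for the marginalized Bayes factor of the form (\ref{eq_6}), where the denominator of the ratio of quadratic forms must be bounded away from zero almost surely — and these details check out against the eigenvalue bounds of Lemmas \ref{lm_3} and \ref{lm_2}.
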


\begin{remark}\label{omega}
	Before proceeding further, it is important to understand the role of $\omega_{\bs}$ in the results obtained so far. It is evident that $\omega_{\bs}$ is related to the effective dimensionality of $\mathcal{M}_{\bs}$ and $\mathcal{M}_{\bs_0}^{t}$. When the mean function of the Gaussian process, $\bmu_{\bs}$, is linear (or, a smooth function of the linear combination of covariates in $|\bs|$), and the coefficient of the $j$-th covariate is associated with same prior across different models $\mathcal{M}_{\bs}$ involving it, then $|\bs\Delta \bs_{0}|=p^{\omega_{\bs}}$. We observed this in the linear regression and Gaussian process regression with squared exponential kernel. However, if this simplification is not available, and $\bmu_{\bs}$ is any function satisfying $\|\bmu_{\bs}\|^{2}=O\left(n|\bs|^{2}\right)$, then $\max\{ |\bs|,|\bs_{0}|\}=p^{\omega_{\bs}}$, which is observed in the AR(1) illustration. Finally, if the dimensions of the competing models do not grow with $n$, then $\omega_{\bs}=0$. Although the role of $\omega_{\bs}$ varies
	with the problems setups, existence of an $\omega_{\bs}$ for which (A1)--(A3) hold, is certain. Consequently, strong Bayes factor consistency is achieved at the rate $n^{1+2r\omega_{\bs}}$.  
	\end{remark}

\section{Variable selection using Bayes factors in misspecified situations}
\label{sec:misspecification}
So far we have investigated consistency of the Bayes factor for variable selection when the true model $\mathcal M_{\bs_0}$ is present in the space of models
being compared. However, for a large number of covariates such an assumption need not always be realistic. Indeed, in practice, for a large number available covariates, 
it is usually not feasible to compare all possible models. As the true subset $\bs_0$ is unknown, it is not unlikely to exclude   
it from the set of models being considered for comparison. In such cases of omissions, it makes sense to select the best subset $\bs$ from the available class of subsets
using Bayes factors. Result \ref{theorem:ibf_conv2}, which may be viewed as an adaptation of Theorem \ref{theorem:ibf_conv} for comparing models that are not necessarily
correct, establishes the usefulness of Bayes factors even in the face of such misspecifications.

First consider a simple case. Let $\bs_{1},\bs_{2}\subseteq \bS$ be two competing models of similar order, in the sense that either $\omega_{\bs_{1}},\omega_{\bs_{2}}>0$, or $\omega_{\bs_{1}}=\omega_{\bs_{2}}=0$. The following result holds in this setup.
\begin{result}
\label{theorem:ibf_conv2}
 Consider the setup of Section \ref{sec:bf_int} with the error variance $\sigma^2_{\epsilon}$ unknown. Let there exist $\omega_{\bs_{1}},\omega_{\bs_{2}}$, such that ($A1^\prime$)--($A3^\prime$) hold for models $\mathcal{M}_{\bs_{1}}, \mathcal{M}_{\bs_{2}}$, respectively,  for each $\bta\in\bTheta$, where $\bTheta$ is compact.
	Assume that, $g(n)=n$ if $\omega_{\bs_{1}}=\omega_{\bs_{2}}=0$, and $g(n)=n\log(n)$ if $\omega_{\bs_{1}},\omega_{\bs_{2}}\in(0,1]$. Also assume the following:
\begin{enumerate}[(i)]
	\item $\log\left(BF^{n}_{\bs,\bs_0}(\bta)\right)/g(n)$ is stochastically equicontinuous,
	\item $E_{\bs_0}\left[\log\left(BF^{n}_{\bs,\bs_0}(\bta)\right) /g(n) \right]$ is equicontinuous with respect to $\bta$ as $n\rightarrow\infty$, and
	\item The limit of $E_{\bs_0}\left[\log\left(BF^{n}_{\bs,\bs_0}(\bta)\right)/g(n)\right]$ exists and is continuous in $\bta$.
\end{enumerate}
If $\bs_1$ and $\bs_2$ are not equal to $\bs_0$, then there exist $\delta_{\bs_1},\delta_{\bs_2}>0$ associated with models 
$\mathcal M_{\bs_1}$ and $\mathcal M_{\bs_2}$ such that
\begin{equation}
	\lim_n \frac{1}{g(n)}\log\left(IBF^{n}_{\bs_1,\bs_2}\right)\stackrel{a.s.}{=} -(\delta_{\bs_1}-\delta_{\bs_2}).
\label{eq:ibf_conv3}
\end{equation}
\end{result}
\begin{proof}
Using similar arguments as in the proof of Theorem \ref{theorem:ibf_conv}, under the assumptions (i)--(iii), one can show that for any $\bs$,
\begin{equation}
	\lim_n \frac{1}{g(n)}\log\left(IBF^{n}_{\bs,\bs_0}\right)\stackrel{a.s.}{=} -\delta_{\bs}(\tilde\bta_{\bs}),
\label{eq:ibf_conv4}
\end{equation}
where, due to (iii), $\overline{\delta_{\bs}(\bta)}=\underline{\delta_{\bs}(\bta)}=\delta_{\bs}(\bta)
	=\lim_n E_{\bs_0}\left[\log\left(BF^{n}_{\bs,\bs_0}(\bta)\right)/g(n)\right]$, is continuous for all $\bta\in\bTheta$, and 
	$\tilde\bta_{\bs}\in\bTheta$ such that by the mean value theorem for integrals, 
	\begin{eqnarray*}
		\overline I_n=\int_{\bTheta}\exp\left(-g(n)\overline{\delta_{\bs}(\bta)}\right)\pi(\bta)d\bta=\exp\left(-g(n)\delta_{\bs}(\tilde\bta_{\bs})\right) \\
		=\int_{\bTheta}\exp\left(-g(n)\underline{\delta_{\bs}(\bta)}\right)\pi(\bta)d\bta=\underline I_n.
		\end{eqnarray*}
Noting that
\begin{equation}
	\frac{1}{g(n)}\log\left(IBF^{n}_{\bs_1,\bs_2}\right)=\frac{1}{g(n)}\log\left(IBF^{n}_{\bs_1,\bs_0}\right)-\frac{1}{g(n)}\log\left(IBF^{n}_{\bs_2,\bs_0}\right),
\label{eq:ibf_conv5}
\end{equation}
the proof is completed by taking limits of both sides of (\ref{eq:ibf_conv5}), applying (\ref{eq:ibf_conv4}) on the two terms on the right hand side, and
denoting $\delta_{\bs}(\tilde\bta_{\bs})$ by $\delta_{\bs}$ for all $\bs$.\qed
\end{proof}

\begin{remark}
	From Result \ref{theorem:ibf_conv2} it follows that $\mathcal M_{\bs_1}$ is the better model than $\mathcal M_{\bs_2}$ if $\delta_{\bs_1}<\delta_{\bs_2}$
	and $\mathcal M_{\bs_2}$ is to be preferred over $\mathcal M_{\bs_1}$ if $\delta_{\bs_1}>\delta_{\bs_2}$. The Bayes factor converges exponentially fast
	to infinity and zero, respectively, in these cases.
	Hence, asymptotically with respect to the Bayes factor, the best subset $\bs$ is the one that minimizes $\delta_{\bs}$.
\end{remark}

\begin{remark}
Let $\omega_{\bs_{1}}=0$ and $\omega_{\bs_{2}}>0$. In this case, it is evident that $\mathcal{M}_{\bs_{1}}$ is closer to $\mathcal{M}_{\bs_{0}}$ than $\mathcal{M}_{\bs_{2}}$, in the sense that, either $|\bs_{0}\Delta\bs_{1}|/|\bs_{0}\Delta\bs_{2}|\rightarrow 0$, or  
	$\max\{|\bs_{1}|,|\bs_{0}|\}/\max\{|\bs_{2}|,|\bs_{0}|\}\rightarrow 0$	
	(see Remark \ref{omega}), i.e., $\mathcal{M}_{\bs_{2}}$ has significantly large number of covariates than $\mathcal{M}_{\bs_{0}}$, 
	compared to $\mathcal{M}_{\bs_{1}}$. 
	Taking $g(n)=n\log(n)$, and following the steps of Result \ref{theorem:ibf_conv2}, one can show that  
$$ \lim_n \frac{1}{g(n)}\log\left(IBF^{n}_{\bs_2,\bs_1}\right)\stackrel{a.s.}{=}-\delta_{\bs_2}.$$
Thus, the Bayes factor favors $\mathcal{M}_{\bs_{1}}$ over $\mathcal{M}_{\bs_{2}}$, and  Bayes factor converges to $0$ at an exponentially fast rate.
	\end{remark}



\section{An overview of our simulation and real data experiments}
\label{sec:overview_sim}
We consider two sets of simulation experiments. In the first set, we provide direct validation of our theoretical results by fixing a true set of covariates
and comparing it with specifically chosen incorrect sets of covariates
using Bayes factor as the sample size is increased. We demonstrate the validity
of our results in the linear regression, Gaussian process regression, as well as in the AR(1) regression context.

In the second simulation scenario, our goal is to identify, using Bayes factors, the true set of data-generating covariates from amongst the set of $2^p-1$ available
subsets of covariates, given any value of $p$ and $n$. 
To this end, we devise a novel and efficient variable-dimensional MCMC algorithm for general-purpose variable selection using Bayes factors, in the framework of 
Transdimensional Transformation based Markov Chain Monte Carlo (TTMCMC) introduced by \ctn{Das19}. 

Not only do we demonstrate the effectiveness of our strategy
with simulation studies involving linear, Gaussian process and AR(1) regressions, but also very successfully apply our procedure to the variable selection problem
in a real riboflavin data consisting of $p=4088$ covariates and $n=71$ data points, using both linear and Gaussian process regression.

\subsection{A briefing on our simulation studies for direct theory validation}
\label{subsec:theorey_validation}
In this section $\sigma^2$ is assumed to be unknown, and is assigned an Inverse-Gamma$(1,1)$ prior. The covariates are generated from scaled 
$t_{(3)}$ distribution, with an AR(1) structured scale matrix $\Sigma_{0}$, where $\rho$ varies from $0.1$--$0.25$.  
The total number of covariates $p$ is fixed at $100$, where $n$ varies from $150$ to $600$. 
Three choices of $|\bs_{0}|$ are taken, viz. $|\bs_{0}|=10,40,70$.

As per our result, we expect the Bayes factor of the true model against any other model to converge to zero as $n\rightarrow\infty$. 
We pre-select two competing models which are closest to the true model, in appropriate sense. First, a supermodel having $k$ additional covariates, is considered. 
Second, we choose a model which has the same cardinality as the true model, and exactly $k$ variables are different from the true model. 
For illustration 1 (linear model) and 3 (AR(1) model) we choose $k=1$, and for illustration 2 (GP with squared exponential kernel), 
we choose $k=5$. We fix the true $\sigma^2$ at $1$. 

We also consider the case for misspecified models in linear regression and GP regression framework. In both the cases we consider two supermodels of the true model, $\mathcal{M}_{\bs_{1}}$ and $\mathcal{M}_{\bs_{2}}$, having $k_{1}$ and $k_{2}$ extra covariates, and $\bs_{1}\subset \bs_{2}$. Clearly, $\mathcal{M}_{\bs_{1}}$ is closer to the true model than $\mathcal{M}_{\bs_{2}}$.
The simulation set up is kept the same as before. For linear regression we choose $k_{1}=1, ~k_{2}=5$, and for GP regression we choose $k_{1}=5,~k_{2}=15$.

Finally, for each pair $(p,n)$ and each example, data-generation procedure is repeated 100 times to reduce randomness, and the mean Bayes factor is reported.
Very encouraging results are obtained with our strategies in each of the regression scenarios considered.
For misspecified models, it is clearly observed that Bayes factor chooses the better model, i.e., $\mathcal{M}_{\bs_{1}}$, at a growing rate with $n$. 
The complete details are provided in Section \ref{sec:9.1}.

\subsection{Simulation experiments with Bayes factor oriented TTMCMC}
\label{subsec:ttmcmc}

Although a plethora of methods are available for Bayesian variable selection (see, for example, \ctn{OHara_2009} for a review),
including variable-dimensional solutions in the linear and generalized 
linear regression contexts
(see, for example, \ctn{Sill98}, \ctn{Lunn06}, \ctn{Sill04}, \ctn{Chevalier20}),
 implementation of variable selection in the nonparametric Gaussian process regression setup, to the best of our knowledge, is nonexistent in the literature. Therefore, it is imperative to develop new methodologies for practical variable selection implementation in this framework.

 Note that when the available number of covariates is even reasonably large, evaluation of the marginal density of the data needed for Bayes factor, even if available in closed form, is infeasible to compute for all possible covariate subsets. Thus, direct comparison of all possible covariate subsets with respect to the marginal density of the data is generally infeasible, and hence suitable MCMC approaches are necessary.
 
 The traditional MCMC approaches are not valid in the model selection scenario. Indeed, different competing models may consist of sets of parameters with varying
 cardinalities, which would render the fixed-dimensional MCMC methods invalid. In the variable selection setup, at least the regression coefficients of the 
 competing models associated with different subsets of covariates, are variable-dimensional. 
 Thus, variable-dimensional MCMC methods are necessary to handle the Bayesian model selection paradigm.
 Although reversible jump MCMC (RJMCMC) \citep{Green95} is a valid model-jumping MCMC method, its effectiveness with respect to practical implementation is often very doubtful,
 with poor mixing properties being the integral part. Thus, considerably more innovative and effective variable-dimensional MCMC procedures are necessary to meet the
 challenges of complex variable-dimensional problems, such as model selection, among many others.

 As such, we shall offer a generic and effective variable-dimensional, Bayes factor oriented solution to any variable selection problem. 
 We employ the novel TTMCMC methodology of \ctn{Das19}
 for general variable-dimensional problems, which is a generalization of the fixed-dimensional
 Transformation based Markov Chain Monte Carlo (TMCMC) of \ctn{Dutta14}. The most important feature of TMCMC is facilitation of updating all the variables
 in question simultaneously using appropriate deterministic transformations of even a singleton random variable. 
 This general strategy leads to remarkable improvement of acceptance rates and mixing properties, even in high dimensions.
 These key features are inherited by TTMCMC in the transdimensional context. 
 
 Here we devise a novel TTMCMC algorithm for generic variable selection problems using mixtures of additive and multiplicative transformations of singleton variables,
 further supplemented with another deterministic transformation step to enhance mixing. The algorithm is available as Algorithm \ref{algo:ttmcmc} 
 in Section \ref{sec:generic_ttmcmc} of the 
 supplement. 
 An important aspect of the algorithm is to propose a new covariate in the
 ``birth move" by Bayes Information Criterion (BIC), given a set of existing covariates. 
 The method of computation of Bayes factors using TTMCMC samples is detailed in Section \ref{sec:bf_comp} of the supplement.
 In Section \ref{sec:proof_conv} of the supplement we provide the proof of its convergence.
 
 The proposed TTMCMC strategy leads to quite effective variable selection, while exhibiting good mixing properties.   
 We demonstrate this with simulation experiments in linear regression, Gaussian process regression and time series
 regression setups (see Section \ref{sec:simstudy} of the supplement).

\subsection{Overview of our real data experiment}

For real data application of our Bayes factor oriented variable selection procedure, we consider a dataset on riboflavin (vitamin $B_2$) production rate, where
the response variable is the log-transformed riboflavin production rate and the covariates are the logarithms of $4088$ gene expression levels. There are
only $n=71$ data points in the data  (thus, a {\it bona fide} real example of the ``large $p$, small $n$" setup). This data, made publicly available by \ctn{Buhl14},
has been analyzed by various research groups using traditional classical methods in the linear regression framework.  
We model this data as linear regression, as well as Gaussian process regression, and using our Bayes factor based covariate selection, obtain
very interesting and insightful results as compared to the existing results (see Section \ref{sec:realdata} of the supplement).

\section{Summary, conclusion and future direction}
\label{sec:conclusion}
This work is an effort to establish an asymptotic theory of variable selection using Bayes factor in a general Gaussian process framework that encompasses
linear, nonlinear, parametric, nonparametric, independent, as well as dependent setups involving a set of covariates, the size of which is allowed to increase
even at much faster rates than the sample size. The setup also includes the special case where the available number of covariates is considered fixed.   
That even in such a general setup it has been possible to establish almost sure exponential convergence of the Bayes factor in favour of the correct subset of covariates,
seems to be quite encouraging. The illustrations in the case of linear regression, Gaussian process model with squared exponential covariance function containing the
covariates, and a first order autoregressive model with time-varying covariates, vindicate the wide applicability of our asymptotic theory. Besides, it has been possible
to adapt our main results on Bayes factor consistency to misspecified cases, where the true set of covariates is not included in the subsets of covariates to be compared
using Bayes factor. As already explained, misspecification has high likelihood in practice, and from this perspective, the result on almost sure exponential convergence
even for misspecifications, seems to be a pleasant one. Recalling  the predominance of linear or additive model based Bayes factor asymptotics, 
and ``in probability" convergence of the Bayes factor, our efforts in this work attempt to provide a significant advancement. 

Furthermore, we have conducted ample simulation experiments to supplement our theoretical investigations. Indeed, not only have we provided direct validation of our theoretical
results; with an eye to variable selection in practical problems, we have devised a generic Bayes factor oriented TTMCMC algorithm for such purpose, demonstrating its efficacy
in detecting the true set of covariates from among a very large pool (size $2^p-1$) of available subsets of covariates, in linear, Gaussian process and AR(1) regression
setups. Our TTMCMC strategy also yielded very interesting (and perhaps quite important) variable selection results in the case of a real riboflavin dataset
consisting of $4088$ covariates and $71$ samples, exemplifying an authentic ``large $p$, small $n$" real-life scenario.

It is easy to discern that our results and the methods of our proofs can be generalized without substantial modifications to situations where parts of the models
are also necessary to select from among a set of possibilities, besides the best set of covariates. For example, in our linear regression example, choice might be necessary
between linear and some specified nonlinear regression functions which also encapsulate the covariates in appropriate forms. In our Gaussian process example with
squared exponential covariance function, the form of the covariance function may itself be questionable, and needs to be chosen from a set of plausible covariance forms,
associated with various stationary and nonstationary Gaussian processes. In the first order autoregressive model example, the order of the autoregression may itself need
to be selected. Our primary calculations confirm that our Bayes factor asymptotics admit extension to simultaneous selection of these model parts and the covariates,
with additional mild assumptions. These findings, with details, will be communicated elsewhere.

\section*{Acknowledgment}
We are sincerely grateful to the Associate Editor and the two referees whose comments have led to significant improvement of our article.

\newpage

\renewcommand\thefigure{S-\arabic{figure}}
\renewcommand\thetable{S-\arabic{table}}
\renewcommand\thesection{S-\arabic{section}}
\renewcommand\theequation{S-\arabic{equation}}
\renewcommand\theresult{S-\arabic{result}}
\renewcommand\thelemma{S-\arabic{lemma}}

\setcounter{figure}{0}
\setcounter{table}{0}
\setcounter{section}{0}
\setcounter{equation}{0}
\setcounter{result}{0}
\setcounter{lemma}{0}

\begin{center}
	        {\bf{\LARGE{Supplementary Material}}}
\end{center}

This document is an addendum to the theory developed in the main manuscript (MB). 
This supplementary material is organized as follows.

In Section \ref{sec:9.1}, we numerically validate the results of MB, in the contexts of linear regression (LR), Gaussian process regression (GPR)
and AR(1) process regression (AR-1). 

Next we consider the problem of Bayes factor based variable selection from among $2^p-1$ available covariates. 
In this regard, in Section \ref{sec:generic_ttmcmc} we introduce our TTMCMC sampler 
for general Bayesian variable selection problems. The method of computation of Bayes factors using TTMCMC samples is detailed in Section \ref{sec:bf_comp}.
In Section \ref{sec:proof_conv} we provide the proof of convergence of our TTMCMC sampler.

In Section \ref{sec:simstudy} we provide the details of our TTMCMC based variable selection experiments in the contexts of LR, GPR, and AR-1.  

In Section \ref{sec:realdata} we address variable selection among a set of $4088$ covariates in a real, riboflavin dataset, using our Bayes factor oriented TTMCMC 
methodology, considering both linear and Gaussian process regression, and obtain interesting insights with respect to existing results on variable selection
in the same dataset obtained using linear regression and classical methods.

Finally, in Section \ref{sec:appendix}, we provide the proof of the lemmas and results stated in the MB.

\section{Direct validation of the theoretical results using simulation experiments}\label{sec:9.1}


\subsection{Linear regression} \label{sec:9.1.1}
Here we assume $y_{i}=\bbeta_{\bs_{0}}^{T} \bx_{i,\bs_{0}}+\epsilon_{i}$ where $\epsilon_{i}\stackrel{iid}{\sim} N(0,1)$. As stated above the covariates are generated from scaled $t_{3}$ distribution, where scale matrix $\Sigma_{0}$ is AR(1) structured, with $\rho=0.25$. We assign Zellner's $g$-prior on the regression coefficients $\bbeta_{\bs}$, with $\bbeta_{0,\bs}=\left(1/p, \ldots, 1/p\right)$, $\sigma^{2}_{\bbeta}=1$ and $g=10$. As set of $|\bs_{0}|$ covariates are chosen at random, and the values of the corresponding coefficients are chosen from an $\mathrm{Uniform}(0,1)$ distribution. 

The results are summarized in Figure \ref{fig1}. Note that, the \emph{supermodel} has exactly one extra variable and the \emph{altered model} has exactly one variable different from the true model. Even for such small changes, the Bayes factor identifies the true model efficiently. Further, as the size of the true model increases, Bayes factor becomes more efficient.     
\afterpage{
\begin{figure}
	\begin{center}
		\includegraphics[height=6cm,width=14cm]{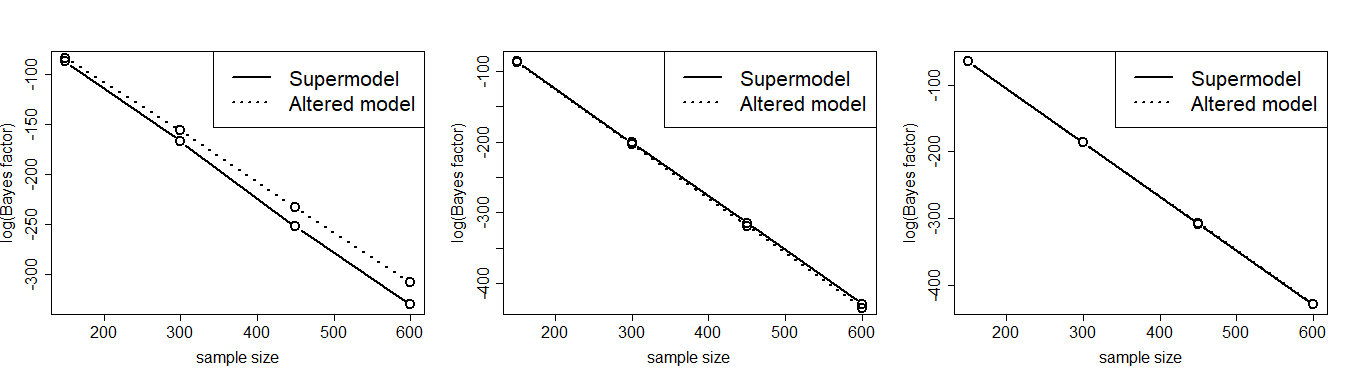}
	\end{center}
	\caption{Line diagram showing how the Bayes factors decrease as sample sizes increase in linear regression when $|\bs_{0}|=10$ (left panel), $|\bs_{0}|=40$ (middle panel), and $|\bs_{0}|=40$ (right panel)}
	\label{fig1}
\end{figure}}

\subsection{Gaussian process with squared exponential kernel}\label{sec:9.1.1.2}
Next we generate data from Gaussian process with squared exponential kernel as given in (\ref{var:rkhs}). We choose 
$D_{\bs}=\mathrm{diag}\{10,\ldots,10\}$ for all $\bs$, and $\sigma_f^{2}=1$. We choose a constant mean function, $\bmu_{\bs}=\mathrm{logistic}\left({\bf x}_{\bs}^{\prime} \bbeta_{\bs}\right)$ for all $\bs$. Note that, the assumptions (A1)-(A3) are satisfied by these choices of the parameters. The coefficients, $\bbeta_{\bs,j}$, are generated randomly from independent Uniform$(-0.5,0.5)$ distributions. 

As before the covariates are generated from scaled $t_{3}$ distribution, where scale matrix $\Sigma_{0}$ is AR(1) structured, with $\rho=0.1$. In this case the \emph{supermodel} has $k=5$ more covariates, and the \emph{altered model} has $k=5$ different covariates than the true model. These covariates are randomly selected from the pool of $p-|\bs_{0}|$ covariates. Figure \ref{fig2} shows the performance of the Bayes factor as $n$ grows. Observe that, unlike both linear regression and AR(1) regression, the Bayes factor detects the true model much faster when some covariates are altered, than a supermodel. 

\afterpage{
\begin{figure}
	\begin{center}
		\includegraphics[height=6cm,width=14cm]{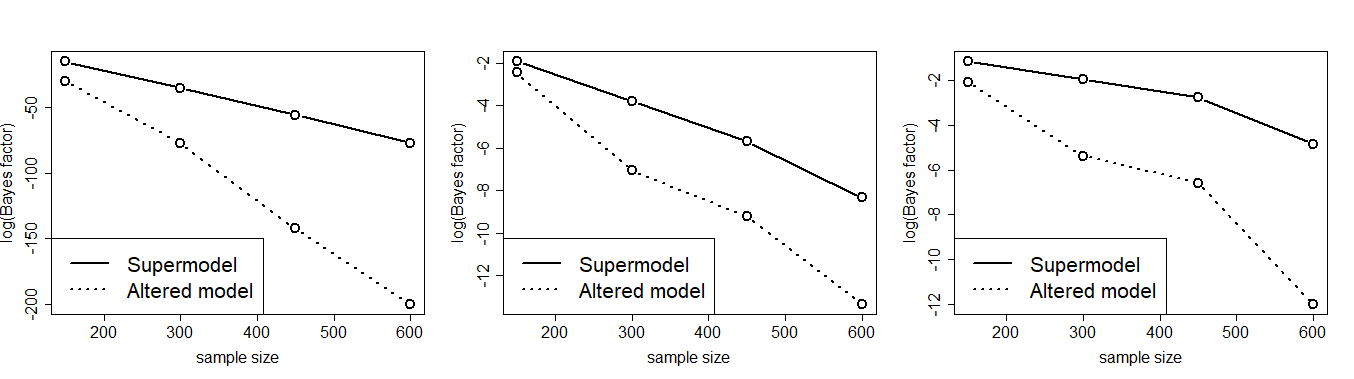}
	\end{center}
	\caption{Line diagram showing how the Bayes factors decrease as sample sizes increase in GP with squared exponential kernel when $|\bs_{0}|=10$ (left panel), $|\bs_{0}|=40$ (middle panel), and $|\bs_{0}|=40$ (right panel)}
	\label{fig2}
\end{figure}}

\subsection{Autoregressive model} The response is now generated from AR(1) model (\ref{eq:ar1_model}) with $\rho=0.25$. The distribution of the covariates, choice of prior on $\bbeta_{\bs}$, and the definition of \emph{supermodel} and \emph{altered model} are same as that in Section \ref{sec:9.1.1}. 

As the true value of $\rho$ is not known, we numerically find the integrated marginal likelihood of the true model and the competing model, considering an $\mathrm{Uniform}(-1,1)$ prior on $\rho$. The integrated Bayes factor is the ratio of the integrated likelihood of the competing and the true model. 

The results are summarized in Figure \ref{fig3}. As in the case of linear model, Bayes factor efficiently captures the true model even when the competing model is the closest one to the truth. 

\afterpage{
\begin{figure}
	\begin{center}
		\includegraphics[height=6cm,width=14cm]{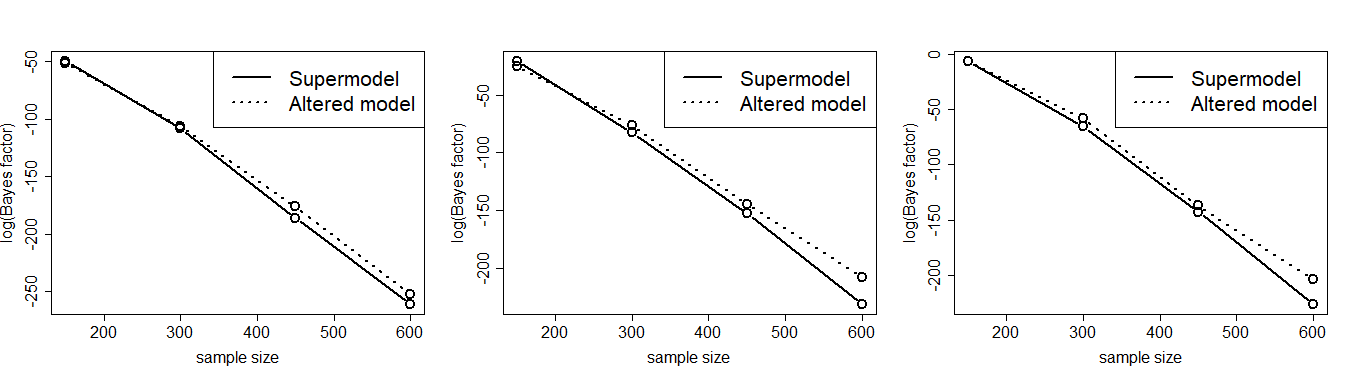}
	\end{center}
	\caption{Line diagram showing how the Bayes factors decrease as sample sizes increase in autoregressive model when $|\bs_{0}|=10$ (left panel), $|\bs_{0}|=40$ (middle panel), and $|\bs_{0}|=40$ (right panel)}
	\label{fig3}
\end{figure}}

\subsection{Misspecified models}
\label{subsec:misspecification}
Now we compare two nested supermodels of the true model, $\mathcal{M}_{\bs_{1}}\subset \mathcal{M}_{\bs_{2}}$, with dimensions $|\bs_{1}|=k_{1}$ and $|\bs_{2}|=k_{2}$, respectively. Clearly, the supermodel with lower dimension, $\mathcal{M}_{\bs_{1}}$, is closer to the true model, and the theory suggests that the Bayes factor $\log BF_{\bs_{2},\bs_{1}}$ decays with growing $n$. 
The linear regression and Gaussian process regression with squared exponential kernel is considered. 

In the linear regression, we choose $k_{1}=1$ and $k_{2}=5$. Everything else is kept same as in Section \ref{sec:9.1.1}, expect here we choose $\bbeta_{0,\bs}=\left(1/2, \ldots, 1/2\right)$.  
In the Gaussian process regression, we choose $k_{1}=5$ and $k_{2}=15$. Everything is kept same as in Section \ref{sec:9.1.1.2}. The results are summarized in Figure \ref{fig4}.

Observe that for both the cases we observe a sharp linear decrease of log Bayes factors as $n$ increases, which validates our theoretical results. 

\begin{figure}
	\begin{center}
		\includegraphics[height=12cm,width=15cm]{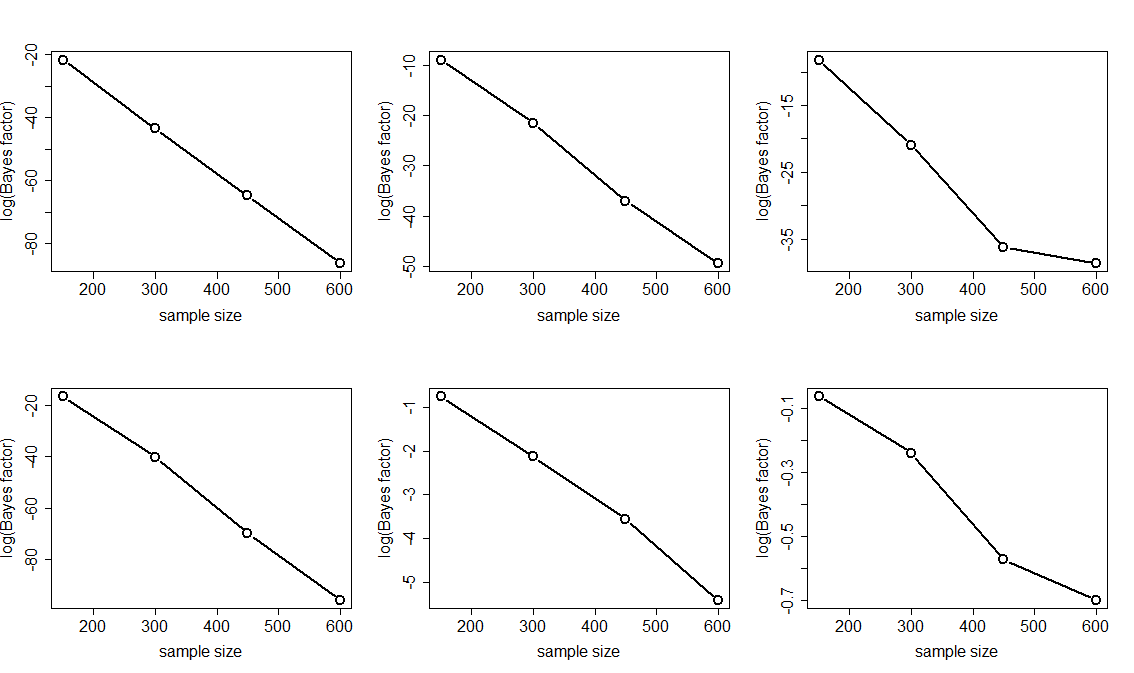}
	\end{center}
	\caption{Line diagram showing how the Bayes factor favors the better model as sample sizes increase in the misspecified models setup, in linear regression (top row), and Gaussian process regression (bottom row), when $|\bs_{0}|=10$ (left panel), $|\bs_{0}|=40$ (middle panel), and $|\bs_{0}|=70$ (right panel).}
	\label{fig4}
\end{figure}

\section{A generic TTMCMC sampler for variable selection}
\label{sec:generic_ttmcmc}

Here we devise a novel TTMCMC algorithm for generic variable selection problems using mixtures of additive and multiplicative transformations of singleton variables,
further supplementing with a deterministic transformation step to enhance mixing. Given a set of existing covariates, we propose a new covariate in the
``birth move" by Bayes Information Criterion (BIC). We compute Bayes factors from the available TTMCMC realizations to compare subsets of the covariates. 
Interestingly, the acceptance ratios of neither TMCMC, nor TTMCMC,
depend upon the proposal distributions, even if they are not symmetric, and even for dimension-changing moves. Thus, these approaches are novel compared to the traditional
fixed-dimensional Metropolis-Hastings and the variable-dimensional RJMCMC approach.

We provide our general TTMCMC sampler for variable selection in the form of Algorithm \ref{algo:ttmcmc}.
We assume that $\bta=(\bbeta,\bvartheta)$ is the set of parameters associated with the model, $\bbeta$ being the $k$-dimensional  
regression coefficients associated with the chosen covariates, where $k$ is a random variable.
The parameter vector $\bvartheta$ consists of other sets of parameters, and may even contain several other parameter vectors associated with the covariates, having the
same (variable) dimension $k$ as $\bbeta$. For instance, in a Gaussian process regression, the mean function may be modeled by a linear regression
with regression coefficients $\bbeta$ and the covariance function may be modeled by a squared exponential kernel consisting of smoothness parameters having the
same random dimension $k$ as $\bbeta$. We shall denote by $\pi(\bta,\bs,k)$ as proportional to the product of the prior and the likelihood, where 
$\bs$ and $k$, the random subset of covariate indices and its cardinality, are also considered unknown and suitable priors are envisaged for the same. 
Thus, with abuse of notation for convenience and simplicity, we 
write the posterior $\pi(\bta,k)$ as
\begin{equation}
	\pi(\bta,\bs,k)\propto L(\bta|\bs,k)\pi(\bta|\bs,k)\pi(\bs|k)\pi(k), 
	\label{eq:post2}
\end{equation}
where $\pi(k)$ denotes the prior for $k$, $\pi(\bta|\bs,k)$ stands for the prior for $\bta$ given $\bs$ and $k$, $\pi(\bs|k)$ is the prior for
$\bs $ given $k$ and $L(\bta|\bs, k)$ is the likelihood for $\bta$ given $\bs$ and $k$. 
Given $k$, we set the uniform prior for $\bs$: 
\begin{equation}
	\pi(\bs|k)=\frac{1}{{p\choose k}},~\mbox{for}~k=1,\ldots,p.
	\label{eq:prior_s}
\end{equation}

\begin{algo}\label{algo:ttmcmc} \topline General TTMCMC algorithm for variable selection.
\botline \normalfont \ttfamily
\begin{itemize}
	\item Let the initial value be $\bta^{(0)}=(\bbeta^{(0)},\bvartheta^{(0)})$, where $\bbeta^{(0)}\in \mathbb R^{k^{(0)}}$, are the coefficients of the $k^{(0)}$ covariates
		in the current regression model, and $\bvartheta^{(0)}$ consists of the initial values of the other model parameters, which
		may even include other $k^{(0)}$-dimensional parameters associated with the covariates in the model. 
		Also let $\bs^{(0)}$ denote the initial choice for the subset of indices for the covariates associated with the model. 
 \item For $t=0,1,2,\ldots$
\begin{enumerate}
	\item Generate $u=(u_1,u_2,u_3)\sim Multinomial (1;w_{b,k^{(t)}},w_{d,k^{(t)}},w_{nc,k^{(t)}})$, where $w_{b,k^{(t)}},w_{d,k^{(t)}},w_{nc,k^{(t)}}$ are birth, death
		and no-change probabilities, given $k^{(t)}$. Hence, 
		$w_{b,k^{(t)}},w_{d,k^{(t)}},w_{nc,k^{(t)}}$ are non-negative and
		$w_{b,k^{(t)}}+w_{d,k^{(t)}}+w_{nc,k^{(t)}}=1$. Also, $w_{b,k^{(t)}}=0$ if $k^{(t)}=|\bS|$ and $w_{d,k^{(t)}}=0$ if $k^{(t)}=1$. 
 \item If $u_1=1$ (increase dimension by selecting a new covariate), generate $U\sim U(0,1)$ and do the following: 
	 \begin{enumerate}
		 \item  If $U\leq \tilde p$, where $\tilde p\in[0,1]$ (use additive transformation for dimension change),
 \begin{enumerate}
 \item Given $\bs^{(t)}$, the current subset of covariates and the current set of parameters $\bta^{(t)}$, select a new covariate $\{x_{ir}:i=1,\ldots,n\}$, where 
 $r\in\bS\backslash\bs^{(t)}$, by minimizing $BIC(u)$, for $u\in\bS\backslash\bs^{(t)}$. Here $BIC(u)$ stands for the BIC when the model consists
		 of the covariates indexed by $\{\bs^{(t)},u\}$. Let $\bs'=\{\bs^{(t)},r\}$.
 \item Randomly select a co-ordinate from $\bbeta^{(t)}=(\beta^{(t)}_1,\ldots,\beta^{(t)}_{k^{(t)}})$ assuming uniform 
	 probability $1/k^{(t)}$ for each co-ordinate.
 Let $j$ denote the chosen co-ordinate.
 \item Generate $\e_1 \sim N(0,1)$ and 
 propose the following birth move: 
 $$ \bbeta'=(\beta^{(t)}_1,\ldots,\beta^{(t)}_{j-1},\beta^{(t)}_j+a_{\beta,j}|\e_1|,\beta^{(t)}_{j}-a_{\beta,j}|\e_1|,\beta^{(t)}_{j+1},\ldots,\beta^{(t)}_{k^{(t)}}).$$	 
 Here $a_{\beta,j}$ is the appropriate positive scaling constant associated with the $j$-th co-ordinate of $\bbeta$. In general, $a_{\theta,j}$ will
 stand for the appropriate positive scaling constant associated with the $j$-th co-ordinate of $\bta$.
	 \item Re-label the elements of $\bbeta'$ as $(\beta'_1,\beta'_2,\ldots,\beta'_{k^{(t)}+1})$.
 \begin{enumerate}
 \item If there is another set of real-valued variable-dimensional parameters, say, $\bgamma$, associated with the covariates, then also generate 
 $\e_2 \sim N(0,1)$ and propose
 $$ \bgamma'=(\gamma^{(t)}_1,\ldots,\gamma^{(t)}_{j-1},\gamma^{(t)}_j+a_{\gamma,j}|\e_2|,\gamma^{(t)}_{j}-a_{\gamma,j}|\e_2|,\gamma^{(t)}_{j+1},\ldots,\gamma^{(t)}_{k^{(t)}}).$$	 
 \item Re-label the elements of $\bgamma'$ as $(\gamma'_1,\gamma'_2,\ldots,\gamma'_{k^{(t)}+1})$.
 \item Repeat the procedure for further sets of variable-dimensional parameters related to the covariates.
 \item Keep all other elements of $\bta$ unchanged, and refer to the entire set of proposed parameter values as $\bta'$.
 \end{enumerate}
\item If $\bbeta$ is the only variable-dimensional parameter related to the covariates, then the acceptance probability of the birth move is:
 \begin{align}
 a_b &= 
	 \min\left\{1, \frac{1}{k^{(t)}+1}\times\frac{w_{d,k^{(t)}+1}}{w_{b,k^{(t)}}} 
	 ~\dfrac{\pi\left(\bta',\bs',k^{(t)}+1\right)}{\pi\left(\bta^{(t)},\bs^{(t)},k^{(t)}\right)}\times 2a_{\beta,j}\right\}.\notag 
 \end{align}
\begin{enumerate}
	\item If $\bgamma$ is another real-valued variable-dimensional parameter related to the covariates, then the acceptance probability of the birth move is:
 \begin{align}
 a_b &= 
	 \min\left\{1, \frac{1}{k^{(t)}+1}\times\frac{w_{d,k^{(t)}+1}}{w_{b,k^{(t)}}} 
	 ~\dfrac{\pi\left(\bta',\bs',k^{(t)}+1\right)}{\pi\left(\bta^{(t)},\bs^{(t)},k^{(t)}\right)}\times 2a_{\beta,j}\times 2a_{\gamma,j}\right\},\notag 
 \end{align}
that is, $2a_{\gamma,j}$ must also be multiplied to the acceptance ratio.
\item For further real-valued variable-dimensional parameter associated with the covariates, the process must be continued by further multiplying 
	twice the scaling constant of the relevant parameter to the acceptance ratio.
\end{enumerate}

\item Set \[ (\bta^{(t+1)},\bs^{(t+1)},k^{(t+1)})= \left\{\begin{array}{ccc}
		(\bta',\bs',k^{(t)}+1) & \mbox{ with probability } & a_b \\
	(\bta^{(t)},\bs^{(t)},k^{(t)}) & \mbox{ with probability } & 1 - a_b.
\end{array}\right.\]
\end{enumerate}
	 \end{enumerate}

	 \begin{enumerate}
		 \item[(b)]  If $U> \tilde p$ (use multiplicative transformation for dimension change),
 \begin{enumerate}
 \item Given $\bs^{(t)}$, the current subset of covariates and the current set of parameters $\bta^{(t)}$, select a new covariate $\{x_{ir}:i=1,\ldots,n\}$, where 
 $r\in\bS\backslash\bs^{(t)}$, by minimizing $BIC(u)$, for $u\in\bS\backslash\bs^{(t)}$. 
		 Let $\bs'=\{\bs^{(t)},r\}$.
 \item Randomly select a co-ordinate from $\bbeta^{(t)}=(\beta^{(t)}_1,\ldots,\beta^{(t)}_{k^{(t)}})$ assuming uniform 
	 probability $1/k^{(t)}$ for each co-ordinate.
 Let $j$ denote the chosen co-ordinate.
 \item Generate $\e_1 \sim U(-1,1)$ and 
	 propose the following birth move: 
		 $$ \bbeta'=(\beta^{(t)}_1,\ldots,\beta^{(t)}_{j-1},\beta^{(t)}_j\e_1,\beta^{(t)}_{j}/\e_1,\beta^{(t)}_{j+1},\ldots,\beta^{(t)}_{k^{(t)}}).$$	 
	 \item Re-label the elements of $\bbeta'$ as $(\beta'_1,\beta'_2,\ldots,\beta'_{k^{(t)}+1})$.
 \begin{enumerate}
	 \item If there is another set of real-valued variable-dimensional parameters, say, $\bgamma$, associated with the covariates, 
		 then also generate $\e_2 \sim U(-1,1)$ and propose
		 $$ \bgamma'=(\gamma^{(t)}_1,\ldots,\gamma^{(t)}_{j-1},\gamma^{(t)}_j\e_2,\gamma^{(t)}_{j}/\e_2,\gamma^{(t)}_{j+1},\ldots,\gamma^{(t)}_{k^{(t)}}).$$	 
	 \item Re-label the elements of $\bgamma'$ as $(\gamma'_1,\gamma'_2,\ldots,\gamma'_{k^{(t)}+1})$.
 \item Repeat the procedure for further sets of variable-dimensional parameters related to the covariates.
 \item Keep all other elements of $\bta$ unchanged, and refer to the entire set of proposed parameter values as $\bta'$.
 \end{enumerate}
\item If $\bbeta$ is the only variable-dimensional parameter related to the covariates, then the acceptance probability of the birth move is:
 \begin{align}
 a_b &= 
	 \min\left\{1, \frac{1}{k^{(t)}+1}\times\frac{w_{d,k^{(t)}+1}}{w_{b,k^{(t)}}}
	 \times \dfrac{\pi\left(\bta',\bs',k^{(t)}+1\right)}{\pi\left(\bta^{(t)},\bs^{(t)},k^{(t)}\right)}\times \frac{|\beta^{(t)}_j|}{|\e_1|}\right\}.\notag 
 \end{align}
\begin{enumerate}
	\item If $\bgamma$ is another real-valued variable-dimensional parameter related to the covariates, then the acceptance probability of the birth move is:
 \begin{align}
 a_b &= 
	 \min\left\{1, \frac{1}{k^{(t)}+1}\times\frac{w_{d,k^{(t)}+1}}{w_{b,k^{(t)}}}
	 \dfrac{\pi\left(\bta',\bs',k^{(t)}+1\right)}{\pi\left(\bta^{(t)},\bs^{(t)},k^{(t)}\right)}\times \frac{|\beta^{(t)}_j|}{|\e_1|}\times\frac{|\gamma^{(t)}_j|}{|\e_2|}\right\}.\notag 
 \end{align}
\item For further variable-dimensional parameter associated with the covariates, noting that the process must be continued by further multiplying 
	the ratio of the absolute value of the current parameter value and the relevant $\e$, to the acceptance ratio.
\end{enumerate}

\item Set \[ (\bta^{(t+1)},\bs^{(t+1)},k^{(t+1)})= \left\{\begin{array}{ccc}
		(\bta',\bs',k^{(t)}+1) & \mbox{ with probability } & a_b \\
	(\bta^{(t)},\bs^{(t)},k^{(t)}) & \mbox{ with probability } & 1 - a_b.
\end{array}\right.\]
\end{enumerate}
	 \end{enumerate}

 \item If $u_2=1$ (decrease dimension by deleting an existing covariate), generate $U\sim U(0,1)$ and do the following: 
	 \begin{enumerate}
		 \item  If $U\leq \tilde p$ (use additive transformation for dimension change),
 \begin{enumerate}
	 \item Randomly select a co-ordinate $j$ from $\{1,\ldots,k^{(t)}\}$ 
		 assuming uniform probability $1/k^{(t)}$ for each co-ordinate, and randomly select a co-ordinate $j'$ from $\{1,\ldots,k^{(t)}\}\backslash\{j\}$ 
		 with probability $1/(k^{(t)}-1)$.
		 Assuming $j<j'$, let $\beta^*_j=(\beta^{(t)}_j+\beta^{(t)}_{j'})/2$. Replace $\beta^{(t)}_j$ with $\beta^*_j$ and delete $\beta^{(t)}_{j'}$.
	 \item Delete $\{x_{ij'}:i=1,\ldots,n\}$. Let $\bs'=\bs^{(t)}\backslash\{j'\}$. 
 \item Propose the following death move: 
	 $$ \bbeta'=(\beta^{(t)}_1,\ldots,\beta^{(t)}_{j-1},\beta^*_j,\beta^{(t)}_{j+1},\ldots,\beta^{(t)}_{j'-1},\beta^{(t)}_{j'+1},\ldots,\beta^{(t)}_{k^{(t)}}).$$	 
 \item Re-label the elements of $\bbeta'$ as $(\beta'_1,\beta'_2,\ldots,\beta'_{k^{(t)}-1})$.
 \begin{enumerate}
 \item If there is another set of real-valued variable-dimensional parameters, say, $\bgamma$, associated with the covariates, then propose 
	 $$ \bgamma'=(\gamma^{(t)}_1,\ldots,\gamma^{(t)}_{j-1},\gamma^*_j,\gamma^{(t)}_{j+1},\ldots,\gamma^{(t)}_{j'-1},\gamma^{(t)}_{j'+1},\ldots,\gamma^{(t)}_{k^{(t)}}),$$	 
 where $\gamma^*_j=(\gamma^{(t)}_j+\gamma^{(t)}_{j'})/2$.
 \item Re-label the elements of $\bgamma'$ as $(\gamma'_1,\gamma'_2,\ldots,\gamma'_{k^{(t)}-1})$.
 \item Repeat the procedure for further sets of variable-dimensional parameters related to the covariates.
 \item Keep all other elements of $\bta$ unchanged, and refer to the entire set of proposed parameter values as $\bta'$.
 \end{enumerate}
\item If $\bbeta$ is the only variable-dimensional parameter related to the covariates, then the acceptance probability of the death move is:
 \begin{align}
 a_d &= 
	 \min\left\{1, k^{(t)}\times\frac{w_{b,k^{(t)}-1}}{w_{d,k^{(t)}}} 
	 ~\dfrac{\pi\left(\bta',\bs',k^{(t)}-1\right)}{\pi\left(\bta^{(t)},\bs^{(t)},k^{(t)}\right)}\times\frac{1}{2a_{\beta,j}}\right\}.\notag 
 \end{align}
\begin{enumerate}
	\item If $\bgamma$ is another real-valued variable-dimensional parameter related to the covariates, then the acceptance probability of the death move is:
 \begin{align}
 a_d &= 
	 \min\left\{1, k^{(t)}\times\frac{w_{b,k^{(t)}-1}}{w_{d,k^{(t)}}} 
	 ~\dfrac{\pi\left(\bta',\bs',k^{(t)}-1\right)}{\pi\left(\bta^{(t)},\bs^{(t)},k^{(t)}\right)}\times \frac{1}{2a_{\beta,j}}\times \frac{1}{2a_{\gamma,j}}\right\},\notag 
 \end{align}
that is, $1/(2a_{\gamma,j})$ must also be multiplied to the acceptance ratio.
\item For further real-valued variable-dimensional parameter associated with the covariates, the process must be continued in the above manner.
\end{enumerate}

\item Set \[ (\bta^{(t+1)},\bs^{(t+1)},k^{(t+1)})= \left\{\begin{array}{ccc}
		(\bta',\bs',k^{(t)}-1) & \mbox{ with probability } & a_d \\
	(\bta^{(t)},\bs^{(t)},k^{(t)}) & \mbox{ with probability } & 1 - a_d.
\end{array}\right.\]
\end{enumerate}
	 \end{enumerate}

	 \begin{enumerate}
		 \item[(b)]  If $U>\tilde p$ (use multiplicative transformation for dimension change),
 \begin{enumerate}
	 \item Randomly select a co-ordinate $j$ from $\{1,\ldots,k^{(t)}\}$ 
		 assuming uniform probability $1/k^{(t)}$ for each co-ordinate, and randomly select a co-ordinate $j'$ from $\{1,\ldots,k^{(t)}\}\backslash\{j\}$ 
		 with probability $1/(k^{(t)}-1)$.
 Assuming $j<j'$, let $\beta^*_j=\sqrt{|\beta^{(t)}_j\beta^{(t)}_{j'}|}$ with probability $1/2$ and set 
 $\beta^*_j=-\sqrt{|\beta^{(t)}_j\beta^{(t)}_{j'}|}$ with the remaining probability. Replace $\beta^{(t)}_j$ with $\beta^*_j$ and delete $\beta^{(t)}_{j'}$.
 \item Delete $\{x_{ij'}:i=1,\ldots,n\}$. Let $\bs'=\bs^{(t)}\backslash\{j'\}$. 
 \item Propose the following death move: 
	 $$ \bbeta'=(\beta^{(t)}_1,\ldots,\beta^{(t)}_{j-1},\beta^*_j,\beta^{(t)}_{j+1},\ldots,\beta^{(t)}_{j'-1},\beta^{(t)}_{j'+1},\ldots,\beta^{(t)}_{k^{(t)}}).$$	 
 \item Re-label the elements of $\bbeta'$ as $(\beta'_1,\beta'_2,\ldots,\beta'_{k^{(t)}-1})$.
 \begin{enumerate}
 \item If there is another set of real-valued variable-dimensional parameters, say, $\bgamma$, associated with the covariates, then propose 
	 $$ \bgamma'=(\gamma^{(t)}_1,\ldots,\gamma^{(t)}_{j-1},\gamma^*_j,\gamma^{(t)}_{j+1},\ldots,\gamma^{(t)}_{j'-1},\gamma^{(t)}_{j'+1},\ldots,\gamma^{(t)}_{k^{(t)}}),$$	 
 where $\gamma^*_j=\sqrt{|\gamma^{(t)}_j\gamma^{(t)}_{j'}|}$ or $-\sqrt{|\gamma^{(t)}_j\gamma^{(t)}_{j'}|}$ with equal probabilities.
 \item Re-label the elements of $\bgamma'$ as $(\gamma'_1,\gamma'_2,\ldots,\gamma'_{k^{(t)}-1})$.
 \item Repeat the procedure for further sets of variable-dimensional parameters related to the covariates.
 \item Keep all other elements of $\bta$ unchanged, and refer to the entire set of proposed parameter values as $\bta'$.
 \end{enumerate}
\item If $\bbeta$ is the only variable-dimensional parameter related to the covariates, then the acceptance probability of the death move is:
 \begin{align}
 a_d &= 
	 \min\left\{1, k^{(t)}\times\frac{w_{b,k^{(t)}-1}}{w_{d,k^{(t)}}} 
	 ~\dfrac{\pi\left(\bta',\bs',k^{(t)}-1\right)}{\pi\left(\bta^{(t)},\bs^{(t)},k^{(t)}\right)}\times \frac{1}{|\beta^{(t)}_{j'}|}\right\}.\notag 
 \end{align}
\begin{enumerate}
	\item If $\bgamma$ is another real-valued variable-dimensional parameter related to the covariates, then the acceptance probability of the death move is:
 \begin{align}
 a_d &= 
	 \min\left\{1, k^{(t)}\times\frac{w_{b,k^{(t)}-1}}{w_{d,k^{(t)}}} 
	 ~\dfrac{\pi\left(\bta',\bs',k^{(t)}-1\right)}{\pi\left(\bta^{(t)},\bs^{(t)},k^{(t)}\right)}\times \frac{1}{|\beta^{(t)}_{j'}|}\times \frac{1}{|\gamma^{(t)}_{j'}|}\right\},\notag 
 \end{align}
that is, $1/|\gamma^{(t)}_{j'}|$ must also be multiplied to the acceptance ratio.
\item For further real-valued variable-dimensional parameter associated with the covariates, the process must be continued in the above manner.
\end{enumerate}

\item Set \[ (\bta^{(t+1)},\bs^{(t+1)},k^{(t+1)})= \left\{\begin{array}{ccc}
		(\bta',\bs',k^{(t)}-1) & \mbox{ with probability } & a_d \\
	(\bta^{(t)},\bs^{(t)},k^{(t)}) & \mbox{ with probability } & 1 - a_d.
\end{array}\right.\]
\end{enumerate}
	 \end{enumerate}

\item If $u_3=1$ (dimension remains unchanged), then given that there are $d$ dimensions in the current iteration, 
		generate $U\sim U(0,1)$. 
	\begin{enumerate}
	\item If $U\leq \tilde p$, then do the following:
	\begin{enumerate}
		\item[(i)] For parameters $\bbeta$, $\bgamma$, etc. associated with the covariates, for $j=1,\ldots,k^{(t)}$, set 
	$\tilde a_{\beta,j}=ca_{\beta,j}$, $\tilde a_{\gamma,j}=ca_{\gamma,j}$, etc. 
	where $c\in (0,1)$ is some appropriate constant. 
        For all other parameter co-ordinates $\theta_j$, let $\tilde a_{\theta,j}=a_{\theta,j}$.
		\item[(ii)] Generate $\varepsilon\sim N(0,1)$, $b_j\stackrel{iid}{\sim}U(\{-1,1\})$ for $j=1,\ldots,d$, and set
			$\theta'_j=\theta^{(t)}_j+b_j\tilde a_{\theta,j}|\varepsilon|$, for $j=1,\ldots,d$.  
		\item[(iii)] Evaluate 
                        \begin{equation*}
				\alpha_1=\min\left\{1,\frac{\pi(\bta',\bs^{(t)},k^{(t)})}{\pi\left(\bta^{(t)},\bs^{(t)},k^{(t)}\right)}\right\}.
                        \end{equation*}
		\item[(iv)] Set $(\bta^{(t+1)},\bs^{(t+1)},k^{(t+1)})=(\bta',\bs^{(t)},k^{(t)})$ with probability $\alpha_1$, 
			else set $(\bta^{(t+1)},\bs^{(t+1)},k^{(t+1)})=(\bta^{(t)},\bs^{(t)},k^{(t)})$. 
	\end{enumerate}
	\end{enumerate}

      \begin{enumerate}
	      \item[(b)] If $U>\tilde p$, then do the following:
	\begin{enumerate}
		\item[(i)] Generate $\varepsilon\sim U(-1,1)$, $b_j\stackrel{iid}{\sim}U(\{-1,0,1\})$ for $j=1,\ldots,d$, and set
			$\theta'_j=\theta^{(t)}_j\varepsilon$ if $b_j=1$, $\theta'_j=\theta^{(t)}_j/\varepsilon$ if $b_j=-1$ and 
			$\theta'_j=\theta^{(t)}_j$ if $b_j=0$, for $j=1,\ldots,d$. 
			Calculate $|J|=|\varepsilon|^{\sum_{j=1}^db_j}$.
		\item[(ii)] Evaluate 
                        \begin{equation*}
				\alpha_2=\min\left\{1,\frac{\pi(\bta',\bs^{(t)},k^{(t)})}{\pi(\bta^{(t)},\bs^{(t)},k^{(t)})}\times|J|\right\}.
                        \end{equation*}
		\item[(iii)] Set $(\bta^{(t+1)},\bs^{(t+1)},k^{(t+1)})=(\bta',\bs^{(t)},k^{(t)})$ with probability $\alpha_2$, 
			else set $(\bta^{(t+1)},\bs^{(t+1)},k^{(t+1)})=(\bta^{(t)},\bs^{(t)},k^{(t)})$. 
	\end{enumerate}
	\end{enumerate}
		
\item ({\it Mixing-enhancement step}) 
	Assume that there are $d$ dimensions in the current iteration after implementing either of the birth, death and no-change steps. Generate $U\sim U(0,1)$. 
	\begin{enumerate}
		\item If $U\leq \tilde q$, where $\tilde q\in (0,1)$, then do the following
	\begin{enumerate}
		\item[(i)] For parameters $\bbeta$, $\bgamma$, etc. associated with the covariates, for $j=1,\ldots,k^{(t+1)}$, set 
	$\tilde a_{\beta,j}=ca_{\beta,j}$, $\tilde a_{\gamma,j}=ca_{\gamma,j}$, etc. 
	where $c\in (0,1)$ is some appropriate constant. 
        For all other parameter co-ordinates $\theta_j$, let $\tilde a_{\theta,j}=a_{\theta,j}$.
		\item[(ii)] Generate $\tilde U\sim U(0,1)$ and $\varepsilon\sim N(0,1)$. If $\tilde U<1/2$, set
			$\theta''_j=\theta^{(t+1)}_j+\tilde a_{\theta,j}|\varepsilon|$, for $j=1,\ldots,d$; else, 
			set $\theta''_j=\theta^{(t+1)}_j-\tilde a_{\theta,j}|\varepsilon|$, for $j=1,\ldots,d$. 
		\item[(iii)] Letting $\bta''=(\theta''_1,\ldots,\theta''_d)$, evaluate 
                        \begin{equation*}
				\alpha_3=\min\left\{1,\frac{\pi(\bta'',\bs^{(t+1)},k^{(t+1)})}{\pi(\bta^{(t+1)},\bs^{(t+1)},k^{(t+1)})}\right\}.
                        \end{equation*}
		\item[(iv)] Set $(\tilde\bta^{(t+1)},\bs^{(t+1)},k^{(t+1)})=(\bta'',\bs^{(t+1)},k^{(t+1)})$ with probability $\alpha_3$, 
			else set $(\tilde\bta^{(t+1)},\bs^{(t+1)},k^{(t+1)})=(\bta^{(t+1)},\bs^{(t+1)},k^{(t+1)})$. 
	\end{enumerate}
	\end{enumerate}

	\begin{enumerate}
        \item[(b)] If $U> \tilde q$, then
	\begin{enumerate}
		\item[(i)] Generate $\varepsilon\sim U(-1,1)$ and $\tilde U\sim U(0,1)$. If $\tilde U<1/2$, set
			$\theta''_j=\theta^{(t+1)}_j\varepsilon$ for $j=1,\ldots,d$ and $|J|=|\varepsilon|^d$, else set 
			$\theta''_j=\theta^{(t+1)}_j/\varepsilon$ for $j=1,\ldots,d$ and 
			$|J|=|\varepsilon|^{-d}$.
		\item[(ii)] Evaluate 
               \begin{equation*}
		       \alpha_4=\min\left\{1,\frac{\pi(\bta'',\bs^{(t+1)},k^{(t+1)})}{\pi(\bta^{(t+1)},\bs^{(t+1)},k^{(t+1)})}\times|J|\right\}.
               \end{equation*}
       \item[(iii)] Set $(\tilde\bta^{(t+1)},\bs^{(t+1)},k^{(t+1)})=(\bta'',\bs^{(t+1)},k^{(t+1)})$ with probability $\alpha_4$, 
	       else set $(\tilde\bta^{(t+1)},\bs^{(t+1)},k^{(t+1)})=(\bta^{(t+1)},\bs^{(t+1)},k^{(t+1)})$. 
	\end{enumerate}
	\end{enumerate}
\end{enumerate}
\item End for
\item Store $\{(\tilde\bta^{(0)},\bs^{(0)},k^{(0)}),(\tilde\bta^{(1)},\bs^{(1)},k^{(1)}),\ldots\}$ for Bayesian inference.
\end{itemize}
\botline \rmfamily
\end{algo}

The main strategies proposed in the general TTMCMC Algorithm \ref{algo:ttmcmc} for variable selection require some elucidation. In this regard,
a few remarks are in order.

First, we propose a mixture of additive and multiplicative transformations in all the steps of the algorithm, since it has been observed in \ctn{Dey16}
that such mixture proposal induces better mixing that either additive or multiplicative transformations using the localised moves of the additive transformation
and the non-localised (``random dive") moves of the multiplicative transformation (see also \ctn{Dutta12} for some theoretical details on random dive).

In the dimension-changing steps 2. and 3. of Algorithm \ref{algo:ttmcmc}, except for the parameters associated with increase or decrease of the dimension, 
we have proposed to keep all the remaining parameters fixed. Fixing the other parameters is not necessary for the validity of TTMCMC; indeed, \ctn{Das19}
proposed to update all the parameters even in the dimension-changing steps. However, in our variable selection experiments, fixing the remaining parameters
led to significantly improved acceptance rates of the birth and death steps compared to the strategy of updating all the unknowns simultaneously.
The choice of the positive scales $a_{\theta,j}$ in the additive transformation part plays important role here. To elucidate, note that it is natural to
expect high acceptance rates with sufficiently small scales in fixed-dimensional problems, but in our variable-dimensional setup, observe that the acceptance
ratios for the birth and death steps depend upon the scales of the parameters selected for birth and death. If the scales are generally chosen to be small, then
the acceptance rate for the birth move would be small as well. On the other hand, if the scales are generally chosen to be relatively large, then the acceptance rate
for the entire dimension-changing move would be small, for a relatively large number of parameters. With these small or large scale choices, the acceptance ratios
in the no-change (fixed-dimensional) step 4. and the mixing-enhancement step 5. would also be small.

We attempt to solve all the above problems with the strategy of choosing somewhat large scales $a_{\theta,j}$ 
and by fixing the parameters in the birth and death steps that are not involved in dimension-change. The relatively large scales would ensure adequate acceptance
rate for the birth move; note that the scales should not be so large as to reduce the death rate significantly. Now, these large scales would also diminish the
acceptance rates in the no-change and the mixing-enhancement steps. To counter this, we multiply the scales of the parameters associated with the covariates 
by $c\in (0,1)$ in those steps, which is a valid
mathematical strategy in the sense of satisfying detailed balance. Further discussion regarding these will be provided in course of the applications of  
Algorithm \ref{algo:ttmcmc}.  

The fixed-dimensional mixing-enhancement step has parallels with \ctn{Liu00} 
(see also the supplement of \ctn{Dutta14} and Algorithm 2 of \ctn{Roy20}). Indeed, it has been observed that 
the strategy can often drastically improve the mixing properties in fixed-dimensional setups.

Finally, note that $\bs$ and $k$ are not updated in the no-change and mixing enhancing steps, so that $\pi(\bs|k)\pi(k)$ gets cancelled in the corresponding acceptance ratios.

\section{Bayes factor computation using TTMCMC realizations}
\label{sec:bf_comp}
Assuming that there are $N$ realizations of TTMCMC 
stored for Bayesian inference after discarding a suitable burn-in period, the Bayes factors associated with the distinct subsets of the covariates featuring in the TTMCMC
samples can be calculated as follows.

Let there be $\tilde N~(<N)$ distinct subsets $\left\{\bs^*_1,\bs^*_2,\ldots,\bs^*_{\tilde N}\right\}$ in the TTMCMC sample, each subset consisting of distinct indices
of a set of covariates which is a subset of the entire pool of covariates indexed by $\bS$. Thus, the TTMCMC sample consists of $\tilde N$ distinct subsets of covariates
out of a total $2^p-1$ possibilities, $p=|\bS|$ being the total available number of covariates. The subsets of covariates that did not feature in the TTMCMC sample
will be interpreted as having negligible posterior probabilities and will be not be considered any further for our Bayesian analyses.

For $i=1,\ldots,\tilde N$, assuming that $\bs^*_i$ is repeated $N_i$ times in the TTMCMC sample, so that $\sum_{i=1}^{\tilde N}N_i=N$, we estimate its posterior probability by
$\tilde\pi(\bs^*_i)=N_i/N$. Let $k^*_i=|\bs^*_i|$ be the cardinality of $\bs^*_i$. Note that the prior for the model associated with any subset $\bs$ consisting
of $k$ covariates is uniform over all ${p\choose k}$ possibilities, given by (\ref{eq:prior_s}). 
Hence, the marginal prior probability of $\bs$ with $|\bs|=k$ is
\begin{equation}
\pi(\bs)=\sum_{j=1}^p\pi(\bs|j)\pi(j)=\pi(\bs|k)\pi(k)=\frac{\pi(k)}{{p\choose k}},
\label{eq:marginal_modelprob1}
\end{equation}
since $\pi(\bs|j)=0$ if $j\neq k$. In the above, $\pi(k)$ denotes the prior for $k$.

Using (\ref{eq:marginal_modelprob1}), we compute for each $i=1,\ldots,\tilde N$,
\begin{equation}
B_i=\frac{\tilde\pi(\bs^*_i)}{\pi(\bs^*_i)}=\frac{N_i}{N}\times\frac{{p\choose k^*_i}}{\pi(k^*_i)}.
\label{eq:B_i}
\end{equation}
For any $i,j\in\{1,\ldots,\tilde N\}$, the (approximate) Bayes factor of the model associated with $\bs^*_i$ against that associated with $\bs^*_j$ is given by
\begin{equation}
BF_{ij}=B_i/B_j.
\label{eq:bf_ij}
\end{equation}
Thus, the best model is the one with the largest $B_i$; $i=1,\ldots,\tilde N$.
Note that $B_i$ is proportional to the marginal density of the data, given the $i$-th model, where the proportionality constant is the same for all the competing models.

\section{Proof of convergence of the TTMCMC algorithm}
\label{sec:proof_conv}

To prove convergence of Algorithm \ref{algo:ttmcmc} it is sufficient to establish detailed balance, irreducibility and aperiodicity of the algorithm, which we 
undertake step-by-step in this section.
For simplicity, let us assume that $\bbeta$ is the only parameter vector associated with the covariates. The extension is trivial for other parameter vectors
associated with the covariates.

\subsection{Proof of detailed balance}

\subsubsection{Additive transformation}

Let us first consider the case of the additive transformation, which we select with probability $\tilde p$.
To see that detailed balance is satisfied for the birth and death moves, note that associated with the birth move, 
the probability (essentially) of transition $(\bbeta^{(t)},\bs^{(t)},k)\mapsto (\bbeta',\bs',k+1)$, with $k=|\bs^{(t)}|$ and
$k+1=|\bs'|$ (so that $\bbeta^{(t)}\in\mathbb R^k$ and $\bbeta'\in\mathbb R^{k+1}$), 
while the other elements of $\bta$ are held fixed, is given by:
\begin{align}
	&\pi(\bta^{(t)},\bs^{(t)},k)\times \tilde p\times\frac{1}{k}\times w_{b,k}\times N(\e:0,1)\notag\\
	&\qquad\times\min\left\{1, \frac{1}{k+1}\times\frac{w_{d,k+1}}{w_{b,k}}
	\times\frac{\pi(\bta',\bs',k+1)}{\pi(\bta^{(t)},\bs^{(t)},k)}\times 
\left|\frac{\partial\bbeta'}{\partial(\bbeta, \e)}\right|\right\}\notag\\
	&=\tilde p\times N(\e:0,1)\times\min\left\{\pi(\bta^{(t)},\bs^{(t)},k+1)\times\frac{1}{k}\times w_{b,k},\right.\notag\\
	&\qquad\left.\frac{1}{k(k+1)}\times w_{d,k+1}\times
	\pi(\bta',\bs',k+1)\times \left|\frac{\partial\bbeta'}{\partial(\bbeta^{(t)}, \e)}\right|
\right\},
\label{eq:db_birth}
\end{align}
where $N(\e:0,1)$ is the density of the normal distribution with mean $0$ and variance $1$, evaluated at $\e$.
Assuming that $\beta^{(t)}_j$ was selected, and was split into $\beta^{(t)}_j+a_{\beta,j}\e$ and $\beta^{(t)}_j-a_{\beta,j}\e$, 
$\left|\frac{\partial\bbeta'}{\partial(\bbeta, \e)}\right|=2a_{\beta,j}$.

At the reverse death move we must be able to return to $(\bbeta^{(t)},\bs^{(t)},k)$ from 
$(\bbeta',\bs',k+1)$, while the other elements of $\bta$ are held fixed.
We select $\beta'_j$ with probability $1/(k+1)$, then select $\beta'_{j+1}$ without
replacement with probability $1/k$, and take the resultant average.

Let $\e^*$ be such that $\beta^{(t)}_j+a_{\beta,j}\e^*=\beta'_j$ and $\beta^{(t)}_j-a_{\beta,j}\e^*=\beta'_{j+1}$, so that $\e^*=(\beta'_j-\beta'_{j+1})/2$.
The transition probability of the death move is hence given by:
\begin{align}
	&\pi(\bta',\bs',k+1)\times \tilde p\times w_{d,k+1}\times N(\e,0,1)
\times\frac{1}{k+1}\times\frac{1}{k}
	\times \left|\frac{\partial (\bbeta',\e)}{\partial(\bbeta^{(t)},\e^*,\e)}\right|
\notag\\
&\qquad\qquad\times\min\left\{1,(k+1)\times\frac{w_{b,k}}{w_{d,k+1}}
	\times\frac{\pi(\bta^{(t)},\bs^{(t)},k)}{\pi(\bta',\bs',k+1)}\times
	\left|\frac{\partial(\bbeta^{(t)},\e^*,\e)}{\partial (\bbeta',\e)}\right|\right\}
\notag\\
&=\tilde p\times N(\e:0,1)\times\min\left\{\pi(\bta',\bs',k+1)\times w_{d,k+1}
\times\frac{1}{k(k+1)}
\times \left|\frac{\partial (\bbeta',\e)}{\partial(\bbeta^{(t)},\e^*,\e)}\right|,\right.\notag\\
	&\qquad\qquad\qquad\qquad\left. 
	\frac{1}{k}\times w_{b,k}\times\pi(\bta^{(t)},\bs^{(t)},k) \right\}\notag\\
&=\tilde p\times N(\e:0,1)\times\min\left\{\pi(\bta',\bs',k+1)\times w_{d,k+1}
\times\frac{1}{k(k+1)}
	\times 2a_{\beta,j},\right.\notag\\
	&\qquad\qquad\qquad\qquad\left.\frac{1}{k}\times w_{b,k}\times\pi(\bta^{(t)},\bs^{(t)},k) \right\}.
\label{eq:db_death}
\end{align}
Thus, (\ref{eq:db_birth}) = (\ref{eq:db_death}), showing that detailed balance holds for the birth and the death moves.
The proof of detailed balance for the no-change move type where the dimension remains unchanged is the same as that of
TMCMC, and has been been proved in the supplement of \ctn{Dutta14}.

\subsubsection{Multiplicative transformation}

Now let us consider the multiplicative transformation, which we select with probability $1-\tilde p$.
For the birth move, the probability (essentially) of the transition $(\bbeta^{(t)},\bs^{(t)},k)\mapsto (\bbeta',\bs,k+1))$, 
while the other elements of $\bta$ are held fixed, is given by:
\begin{align}
	&\pi(\bta^{(t)},\bs^{(t)},k)\times (1-\tilde p)\times\frac{1}{k}\times w_{b,k}\times U(\e:-1,1)\notag\\
	&\qquad	\times\min\left\{1, \frac{1}{k+1}\times\frac{w_{d,k+1}}{w_{b,k}}\times\frac{1}{2}
	\times\frac{\pi(\bta',\bs',k+1)}{\pi(\bta^{(t)},\bs^{(t)},k)}\times 
	\left|\frac{\partial\bbeta'}{\partial(\bbeta^{(t)}, \e)}\right|\right\}\notag\\
	&=(1-\tilde p)\times U(\e:-1,1)
	\times\min\left\{\pi(\bta^{(t)},\bs^{(t)},k)\times\frac{1}{k}\times w_{b,k},\right.\notag\\
	&\qquad\left.\frac{1}{k(k+1)}\times w_{d,k+1}\times
	\pi(\bta',\bs',k+1)\times\frac{1}{2}\times \left|\frac{\partial\bbeta'}{\partial(\bbeta^{(t)}, \e)}\right|
\right\},
\label{eq:db_birth2}
\end{align}
where $U(\e:-1,1)$ is the density of the uniform distribution on $[-1,1]$, evaluated at $\e$.
Assuming that $\beta^{(t)}_j$ was selected, and was split into $\beta^{(t)}_j\e$ and $\beta^{(t)}_j/ \e$, 
$\left|\frac{\partial\bbeta'}{\partial(\bbeta^{(t)}, \e)}\right|=2|\beta^{(t)}_j|/|\e|$.

At the reverse death move we must be able to return to $(\bbeta^{(t)},\bs^{(t)},k)$ from 
$(\bbeta',\bs',k+1)$, while the other elements of $\bta$ are held fixed.
We select $\beta'_j$ with probability $1/(k+1)$, then select $\beta'_{j+1}$ without
replacement with probability $1/k$, and take $\sqrt{|\beta'_j\beta'_{j+1}|)}$ or $-\sqrt{|\beta'_j\beta'_{j+1}|)}$ with equal probabilities.

Let $\e^*$ be such that $\beta^{(t)}_j\e^*=\beta'_j$ and $\beta^{(t)}_j/\e^*=\beta'_{j+1}$, so that $\e^*=\pm\sqrt{|\beta'_j\beta'_{j+1}|)}$.
The transition probability of the death move is hence given by:
\begin{align}
	&\pi(\bta',\bs',k+1)\times (1-\tilde p)\times w_{d,k+1}\times U(\e,-1,1)
\times\frac{1}{k+1}\times\frac{1}{k}
	\times \frac{1}{2}\times\left|\frac{\partial (\bbeta',\e)}{\partial(\bbeta^{(t)},\e^*,\e)}\right|
\notag\\
&\qquad\qquad\times\min\left\{1,(k+1)\times\frac{w_{b,k}}{w_{d,k+1}}
	\times\frac{\pi(\bta^{(t)},\bs^{(t)},k)}{\pi(\bta',\bs',k+1)}\times 2\times
	\left|\frac{\partial(\bbeta^{(t)},\e^*,\e)}{\partial (\bbeta',\e)}\right|\right\}
\notag\\
	&=(1-\tilde p)\times U(\e:-1,1)\times\min\left\{\pi(\bta',\bs',k+1)\times w_{d,k+1}
	\times\frac{1}{k(k+1)}\times\frac{1}{2}\right.\notag\\
	&\qquad\qquad\left.\times \left|\frac{\partial (\bbeta',\e)}{\partial(\bbeta^{(t)},\e^*,\e)}\right|,
	\frac{1}{k}\times w_{b,k}\times\pi(\bta^{(t)},\bs^{(t)},k) \right\}\notag\\
	&=(1-\tilde p)\times U(\e:-1,1)\times\min\left\{\pi(\bta',\bs',k+1)\times w_{d,k+1}
\times\frac{1}{k(k+1)}\right.\notag\\
	&\qquad\qquad\left.	\times |\beta'_{j'}|,\frac{1}{k}\times w_{b,k}\times\pi(\bta^{(t)},\bs^{(t)},k) \right\}.
\label{eq:db_death2}
\end{align}
Noting that $|\beta'_{j'}|=|\beta^{(t)}_j|/|\e|$, it is seen that (\ref{eq:db_birth2}) = (\ref{eq:db_death2}); that is, detailed balance holds for the birth and the death moves
with respect to the multiplicative transformation.
Again, the proof of detailed balance for the no-change move type where the dimension remains unchanged is the same as that of
TMCMC.

Also, the proof of detailed balance of the mixing-enhancement step (Step 5. of Algorithm \ref{algo:ttmcmc}) is the same as that of TMCMC.

\subsection{Irreducibility and aperiodicity}
\label{subsec:irr_ap}
The proof of irreducibility and aperiodicity of Algorithm \ref{algo:ttmcmc} follows easily from the general arguments provided in the supplements of \ctn{Das19}
and \ctn{Dutta14}.


\section{Bayes factor based variable selection experiments with TTMCMC}\label{sec:simstudy}



We now provide details of our simulation studies with respect to variable selection. We consider linear regression (Section \ref{sec:linear_regression} of MB), 
Gaussian process regression with squared exponential covariance kernel (Section \ref{sec:gp_illustration} of MB)
as well as autoregressive regression (Section \ref{subsec:time_series1} of MB) for our purpose.

\subsection{Linear regression}
\label{subsec:linreg_simstudy}

\subsubsection{Data generation with random sets of covariates}
As in Section \ref{sec:linear_regression} of MB, we consider the model of the form 
$y_i=\bbeta'_{\bs}\bx_{i,\bs}+\epsilon_i$, where $\epsilon_i\stackrel{iid}{\sim}N(0,\sigma^2_{\e})$.
For the true, data-generating model, we set $\sigma^2_{\epsilon}=0.1$, and set, for $i=1,\ldots,n$ and $j=1,\ldots,p=|\bS|$, 
$x_{ij}=5/j+\eta_{ij}$, where $\eta_{ij}\stackrel{iid}{\sim}N(0,\sigma^2_{\eta})$, with $\sigma^2_{\eta}=0.1$.
For generating the data, we randomly select a subset $\bs$ from the set $\bS$ associated with $p$ covariates, construct $\bx_{i,\bs}$ and simulate the elements of the regression 
coefficient vector $\bbeta_{\bs}$ independently from $N(0,\sigma^2_{b})$, with $\sigma^2_{b}=5$. We also consider an intercept $\alpha$ in our data-generating model,
which we simulate as $\alpha\sim N(\mu_{\alpha},\sigma^2_{\alpha})$, with $\mu_{\alpha}=1$ and $\sigma^2_{\alpha}=0.1$. 
Abusing notation for convenience, we shall assume that $\alpha$ is the first element of $\bbeta_{\bs}$ and that the vector of ones is the first column of 
the design matrix $X_{\bs}$. With this setup, we then generate the data
from the resulting true regression model.

For data generation, we consider three scenarios. Setting $p=10,20,30$, we generate $n=25,25,35$ data-points for the respective values of $p$. We repeat the data-generation
procedure $1000$ times for each pair $(p,n)$, so that for every $(p,n)$, we have $1000$ datasets, each consisting of $n$ data-points and a random subset of covariates
selected from the possible $p$ covariates. For each of the $1000$ simulated datasets, we attempt to select the best subset of covariates using Bayes factor obtained
through TTMCMC. The Bayesian model and prior specifications that we used for the purpose is detailed next. 

\subsubsection{Bayesian linear regression model and prior specification for variable selection using TTMCMC and Bayes factors}
Then assuming that the model for the simulated data $y_i$ is normal linear regression (with intercept) on an unknown subset of covariates of the complete
set of $p$ covariates, and with all parameters unknown, we attempt to select the best subset of covariates, using our TTMCMC algorithm (Algorithm \ref{algo:ttmcmc}) 
and Bayes factors resulting from TTMCMC, as detailed in Section \ref{sec:bf_comp}. 
For the prior on $\bbeta_{\bs}$, we consider the same form of Zellner's $g$ prior considered in Section \ref{sec:linear_regression} of MB; 
here we assume the following equivalent form: 
\begin{equation}
	\bbeta_{\bs}\sim N\left(\bzero,\exp(\phi-g)\left( X_{\bs}^{\prime} X_{\bs}\right)^{-1} \right),
\label{eq:g_prior2}
\end{equation}
where $g$ and $\phi$ are real-valued parameters. 
Rather than fixing $g$ and $\phi$, we consider them as random variables, to be updated in TTMCMC. Thus, priors are needed on these parameters.
As in the case of Zellner-Siow prior (\ctn{Zellner80}; see also \ctn{Liang08} for further discussion), we assume that {\it a priori}, $\exp(g)\sim Gamma(1/2,n/2)$, 
so that the log-prior for $g$ is given, after ignoring an additive constant, by
\begin{equation}
	\log\pi(g)=-\frac{n}{2}\exp(g)+\frac{g}{2}.
	\label{eq:logprior_g}
\end{equation}
We also assume that 
\begin{equation}
	\pi(\phi)\propto 1.
	\label{eq:prior_phi}
\end{equation}
As regards the prior for $\sigma^2_{\e}$, we re-parameterize this as $\exp(-\tau)$, and assume that $\exp(\tau)\sim Gamma(a_{\tau},b_{\tau})$, so that the log-prior,
after ignoring an additive constant, is given by
\begin{equation}
	\log\pi(\tau)=-b_{\tau}\exp(\tau)+a_{\tau}\tau.
	\label{eq:logprior_tau}
\end{equation}
We set $a_{\tau}=b_{\tau}=0.01$.

We put a discrete normal prior on $k=|\bs|$, given by
\begin{equation}
	\pi(k)\propto\exp\left\{-\frac{1}{2\sigma^2_k}(k-\mu_k)^2\right\};~k=1,2,\ldots,p.
	\label{eq:prior_k}
\end{equation}
Note that although the Poisson distribution is commonly used
for specifying priors on the dimension in variable-dimensional problems, the above discrete normal prior is more flexible, since it can control both the mean and variance
of the dimensionality, unlike the Poisson prior which has the same mean and variance.

In (\ref{eq:prior_k}) we set $\mu_k=8,16,24$, respectively, when $p=10,20,30$, and fix $\sigma^2_k=1$ for all the chosen values of $p$. 
These relatively large values of $\mu_k$ with respect to $p$ are chosen to avoid the Lindley's paradox which creates the tendency among Bayes factors 
to select parsimonious models, irrespective of the truth. The variance $\sigma^2_k=1$ is expected to disallow significant drift of the dimension towards small values,
unless the data dictates so.

\subsubsection{TTMCMC implementation for Bayesian linear regression}
For TTMCMC implementation, we set $w_{b,k}=w_{d,k}=w_{nc,k}=1/3$ for all $k=2,\ldots,p-1$; for $k=1$ and $k=p$, we set $w_{d,k}=0$ and $w_{b,k}=0$, respectively.
For the latter two cases, we set $w_{b,k}=w_{nc,k}=1/2$ and $w_{d,k}=w_{nc,k}=1/2$, respectively.

We also set $\tilde p=\tilde q=1/2$, so that we select additive and multiplicative transformations with equal probabilities.
We set the scales $a_{\beta,j}=0.5$, for $j=1,\ldots,p$, and $a_{\theta,j}=0.05$ for the remaining parameters. 
However, when $p$ is as large as $20$ and $30$, we set $a_{\theta,j}=0.005$ for the remaining parameters to make the acceptance rates reasonably large.
For the no-change and 
mixing-enhancement steps, we set $c=0.01$. Recall from the discussion in Section \ref{sec:generic_ttmcmc} that the goal of this strategy 
is to improve acceptance rates of the birth moves as well as of the
no-change and mixing-enhancing moves, induced by the additive transformation. Indeed, note that with the additive transformation, 
the acceptance ratio of the birth move depends significantly
on twice $a_{\beta,j}$, so that relatively large value of $a_{\beta,j}$ would lead to higher acceptance probability. However, too large $a_{\beta,j}$ would of course
lead to increased rejection rate, since $\beta^{(t)}_j+a_{\beta,j}|\e_1|$ and $\beta^{(t)}_j-a_{\beta,j}|\e_1|$ may take the new $\bbeta$-vector too far from
the current $\bbeta^{(t)}$-vector. Thus, relatively large, but adequate choices of $a_{\beta,j}$s are necessary. This also ensures that the acceptance rate of the death
move, which depends upon inverse of $a_{\beta,j}$, is not too small.

Now, relatively large choice of $a_{\beta,j}$s would make the acceptance rates associated with the no-change and the mixing-enhancing steps induced
by the additive transformation too small, since in those steps, all the unknown quantities are updated simultaneously. To avoid this undesirable situation, 
we multiply $a_{\beta,j}$s by $c=0.01$, so that they are rendered adequately small in these steps. Detailed balance is easily seen to hold with respect to this
multiplication by $c$, in the same way as in fixed-dimensional TMCMC.

We standardize all the available covariates so that their empirical means and variances are $0$ and $1$, respectively. 
Now note that minimizing $BIC(u)$ in the linear regression case reduces to minimizing the residual sum of squares 
$\sum_{i=1}^n(y_i-\hat\bbeta'_{\bs^{(t)}}\bx_{i,\bs^{(t)}}-\hat\beta_u x_{i,u})^2$ with respect to $u\in\bS\backslash\bs^{(t)}$, 
where $(\hat\bbeta_{\bs^{(t)}},\hat\beta_u)$ is the least squares estimator associated with the current covariate index subset $\bs^{(t)}$.

For our TTMCMC implementation, we discard the first $10^4\times 150$ iterations as burn-in, and store one in every $150$ iterations in the next $5\times 10^4\times 150$
iterations, to obtain $5\times 10^4$ iterations for our Bayesian inference. We initialise our TTMCMC algorithm with only one covariate, $\{x_{i1};~=1,\ldots,n\}$.

\subsubsection{Parallelization}
Recall that for every pair $(p,n)$, $1000$ datasets are generated and TTMCMC must be implemented for variable selection in each of the $1000$ datasets. Thus,
$1000$ TTMCMC implementations are necessary for each pair $(p,n)$. For Bayesian linear regression, a single typical TTMCMC run in our $C$ code implementation 
on each core (with $2.8$ GHz CPU speed) of our VMWare (about $2$ TB memory) takes about $2$ minutes, $4$ minutes and $11$ minutes, respectively, 
for $(p=10,n=25)$, $(p=20,n=25)$ and $(p=30,n=35)$. 
Hence, for completing our simulation experiments in reasonable times, parallelization of our computations is indispensable.

Although our VMWare that we use for our current research consists of $80$ single-threaded cores, using only the best $50$ of them yields the optimum performance.
As such, using shell scripting language, we parallelise the $1000$ $C$ code based TTMCMC runs for each $(p,n)$ combination into $50$ cores, so that
$50$ TTMCMC runs are simultaneously implemented for each $(p,n)$; each core implementing only $20$ TTMCMC runs. 
This parallelization strategy allowed us to obtain the results for all our simulation experiments in very reasonable times, as is obvious
from the aforementioned timings for the single TTMCMC runs.

A typical TTMCMC run for the $(p=10,n=25)$ case yields the overall acceptance rate $0.193$, birth rate $0.064$, death rate $0.081$ and no-change rate $0.410$.
For $(p=20,n=25)$, these rates are $0.191$, $0.039$, $0.039$ and $0.496$, respectively, and for $(p=30,n=35)$, these are 
$0.234$, $0.101$, $0.101$ and $0.498$, respectively. These rates are computed on the basis of the entire TTMCMC run, not just on the stored samples.
That is, these rate computations are based on $10^4\times 150+5\times 10^4\times150=9\times 10^6$ TTMCMC realizations.

\subsubsection{Results of the linear regression simulation experiments}

After every TTMCMC run in each processor of our VMWare, we implement an $R$ code that computes $B_i$ given by (\ref{eq:B_i}), for $i=1,\ldots\tilde N$.
The $R$ code selects that set of covariates indexed by $\bs_{best}$ which corresponds to $B_{\max}=\max\{B_1,\ldots,B_{\tilde N}\}$. 
We also consider a binary vector $V=(v_1,\ldots,v_p)$, 
where, for $j=1,\ldots,p$, $v_j=1$ or $0$ accordingly as $j\in\bs_{best}$ or $j\notin\bs_{best}$. Also, let $V_0$ denote the binary vector associated with $\bs_0$,
the set of indices of the data-generating covariates. The $R$ code also computes the Hamming distance between the binary vectors $V$ and $V_0$, which, simply put, is
the total number of position-wise mismatches in the two vectors consisting of $p$ positions. Thus, the Hamming distance is zero if and only if $V=V_0$, that is, when
the best model obtained is the same as the true model. The Bayes factor of the best model against the true, data-generating model is also computed in the $R$ code
using the formula (\ref{eq:bf_ij}), provided that the true model appears in the stored TTMCMC sample. Furthermore, we also compute the rank of the true model based
on the $B_i$ values, again provided that the true model features in the stored TTMCMC sample.

For each $(p,n)$, these results for all the $1000$ TTMCMC runs are combined to yield the proportions of times the Hamming distance takes the values 
$0,1,\ldots,p-1$, among the $1000$ runs.
We also compute the average log-Bayes factor of the best model against the true model and the average rank of the true model, 
the averaging done over those TTMCMC samples which consist of the true model and Hamming distance value $r$, for $r=0,1,\ldots,p-1$. 
These results are depicted in Figure \ref{fig:linreg}. Panel (a) of the figure shows that for $p=10,n=25$, the Hamming distance gives the highest probability 
(about $0.527$) to $0$, that is, the true set of covariates is selected with the highest probability, which is also significantly higher compared to the other 
values of the Hamming distance. The average log-Bayes factor, as shown in panel (b), is the highest when the Hamming distance is $5$, while for Hamming distance
$7$, $8$ and $9$, the average log-Bayes factor is not available since the true set of covariates did not appear in the TTMCMC samples in those cases.
Note that the average log-Bayes factor is not increasing with the Hamming distance, which is indeed not to be expected in general.
The average true model rank, displayed in panel (c), is increasing with the Hamming distance, but again, is unavailable for the values $7$, $8$ and $9$
of the Hamming distance since the true set of covariates has probability zero with respect to the respective TTMCMC samples.

The scenario when $p=20,n=25$, is not significantly different from the $p=10,n=25$ case. Panel (d) shows that the Hamming distance gives the highest probability $0.143$ to both
$0$ and $1$, which is again significantly higher than those for the other values. 
The highest probability is of course much less than in the corresponding $(p=10,n=25)$ scenario, which is expected, since the number of covariate subsets
to search for the true covariate subset is far greater than in the previous case. Since $n=25$ is also the same as before, the information about the true covariate set 
is not increased either. But that in spite of these issues the true covariate sets are found with the highest probability, vindicates the efficacy of our variable selection
theory and the TTMCMC based methodology.
The average log-Bayes factor is the highest when the Hamming distance is $9$
and the average true model rank is the highest for Hamming distance $7$. Note that unlike the case of $(p=10,n=25)$, the true model rank is not increasing with
the Hamming distance in this case, and this is to be expected in general

The case of $p=30,n=35$ is the most challenging situation among all the $(p,n)$ pairs considered, as searching for the true set of covariates from among a set of
$2^{30}-1=1073741823$ possible subsets is akin to looking for a needle in a haystack! Yet, as panel (g) of Figure \ref{fig:linreg} shows, 
the Hamming distance gives significant probability to $0$,
while the value $2$ gets the highest probability. This performance of our TTMCMC based Bayes factor should not be considered unsatisfactory at all. Note that the
average log-Bayes factor is the highest when the Hamming distance is $10$, but the average model rank is the worst when the Hamming distance is as small as $5$, relative to
$p-1=29$. In other words, even for Hamming distance $5$, there are many models that perform better than the true model on a average, in terms of Bayes factor.
Since for most of the larger values of the Hamming distance the true sets of covariates have probabilities zero with respect to TTMCMC, it is clear that for most values
of the Hamming distance, the true, data-generating model is outperformed by the other models. It must also be remarked that for a limited TTMCMC sample size, 
reliably measuring the performances of many important models among a set of such a huge number of models, is infeasible.
\begin{figure}
	\centering
	\subfigure [$p=10,n=25$.]{ \label{fig:a10}
	\includegraphics[width=4.5cm,height=4.5cm]{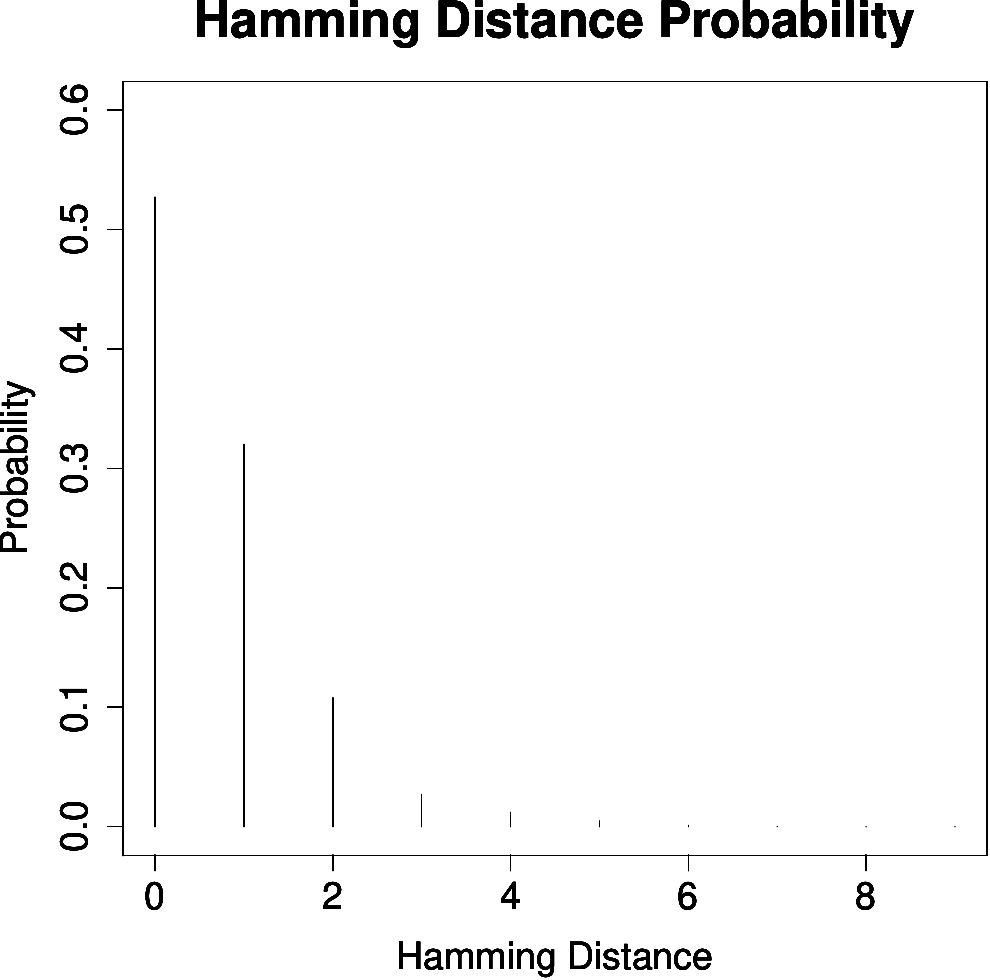}}
	\hspace{2mm}
	\subfigure [$p=10,n=25$.]{ \label{fig:b10}
	\includegraphics[width=4.5cm,height=4.5cm]{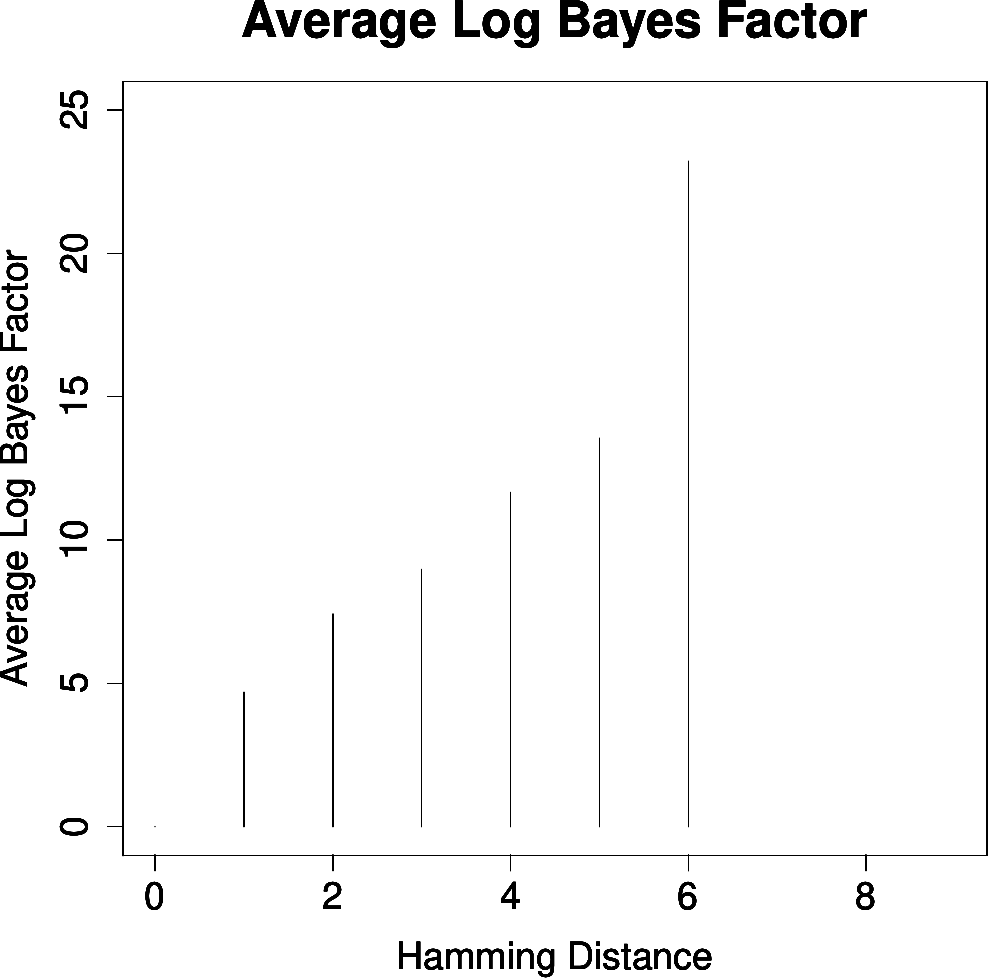}}
	\hspace{2mm}
	\subfigure [$p=10,n=25$.]{ \label{fig:c10}
	\includegraphics[width=4.5cm,height=4.5cm]{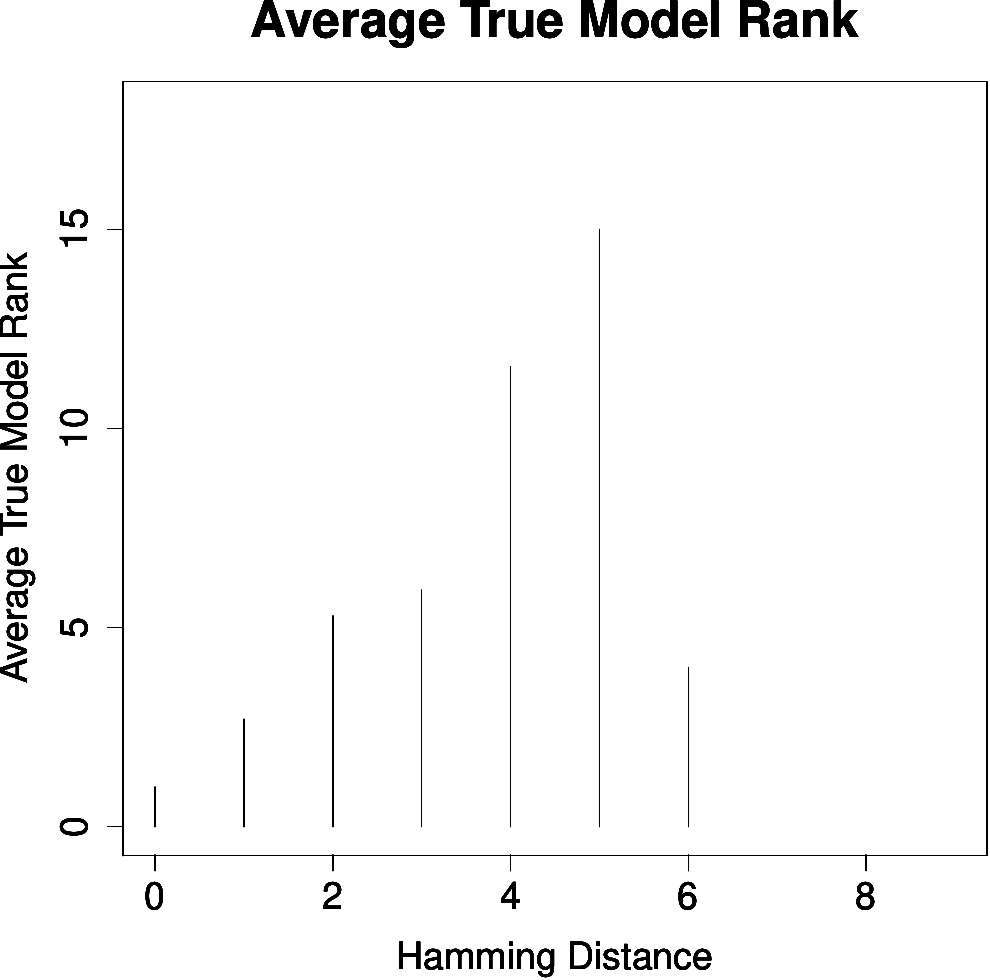}}\\
	\vspace{2mm}
	\subfigure [$p=20,n=25$.]{ \label{fig:a20}
	\includegraphics[width=4.5cm,height=4.5cm]{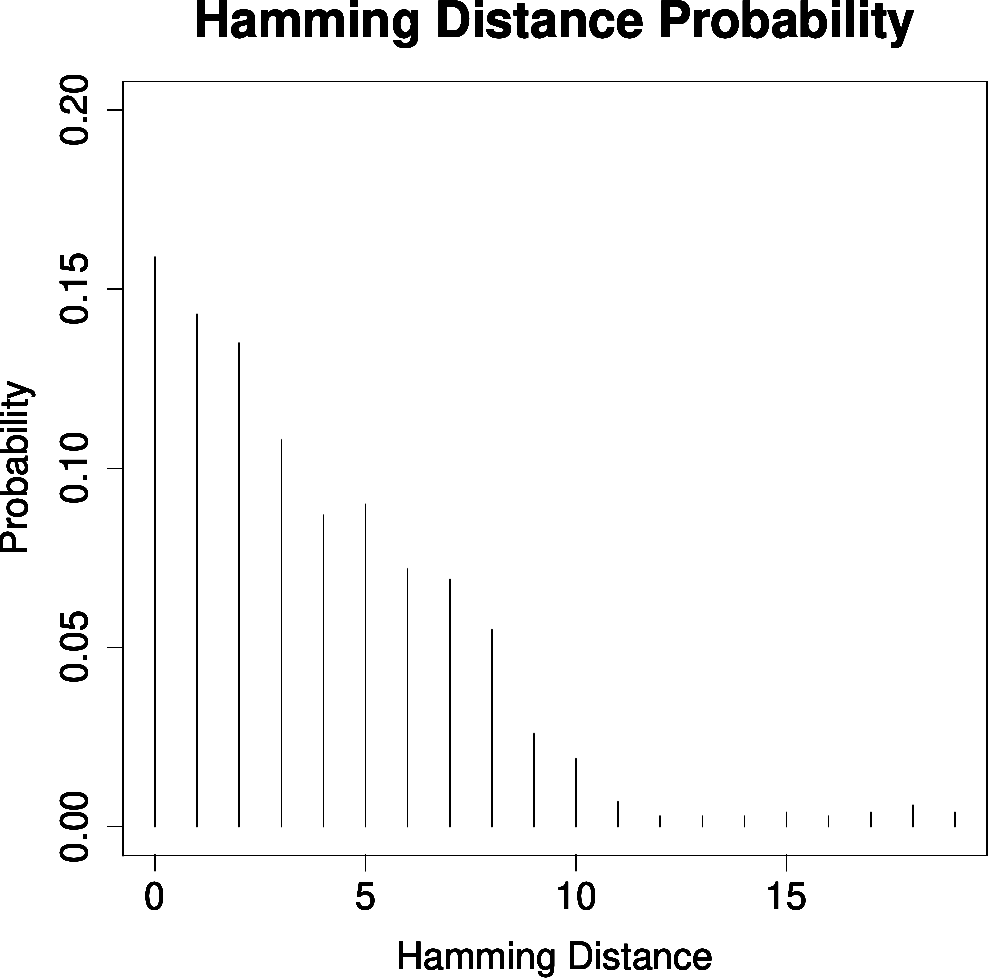}}
	\hspace{2mm}
	\subfigure [$p=20,n=25$.]{ \label{fig:b20}
	\includegraphics[width=4.5cm,height=4.5cm]{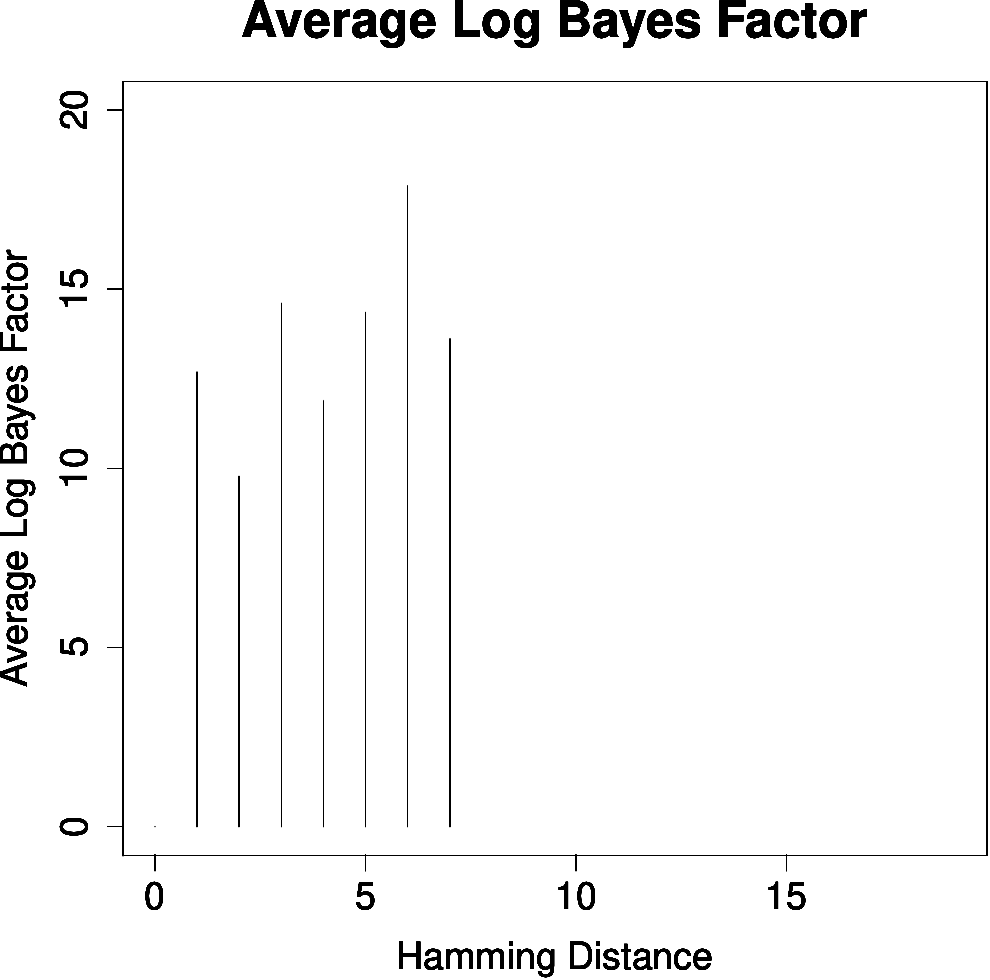}}
	\hspace{2mm}
	\subfigure [$p=20,n=25$.]{ \label{fig:c20}
	\includegraphics[width=4.5cm,height=4.5cm]{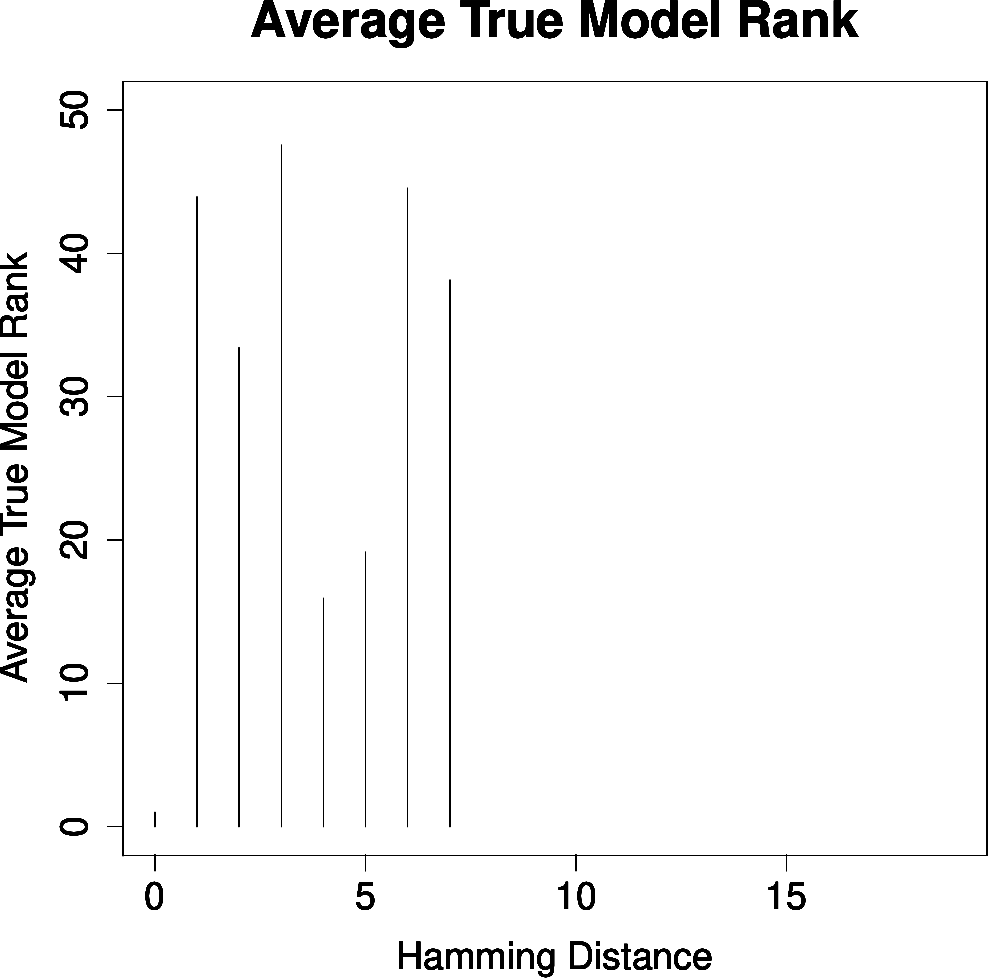}}\\
	\vspace{2mm}
	\subfigure [$p=30,n=35$.]{ \label{fig:a30}
	\includegraphics[width=4.5cm,height=4.5cm]{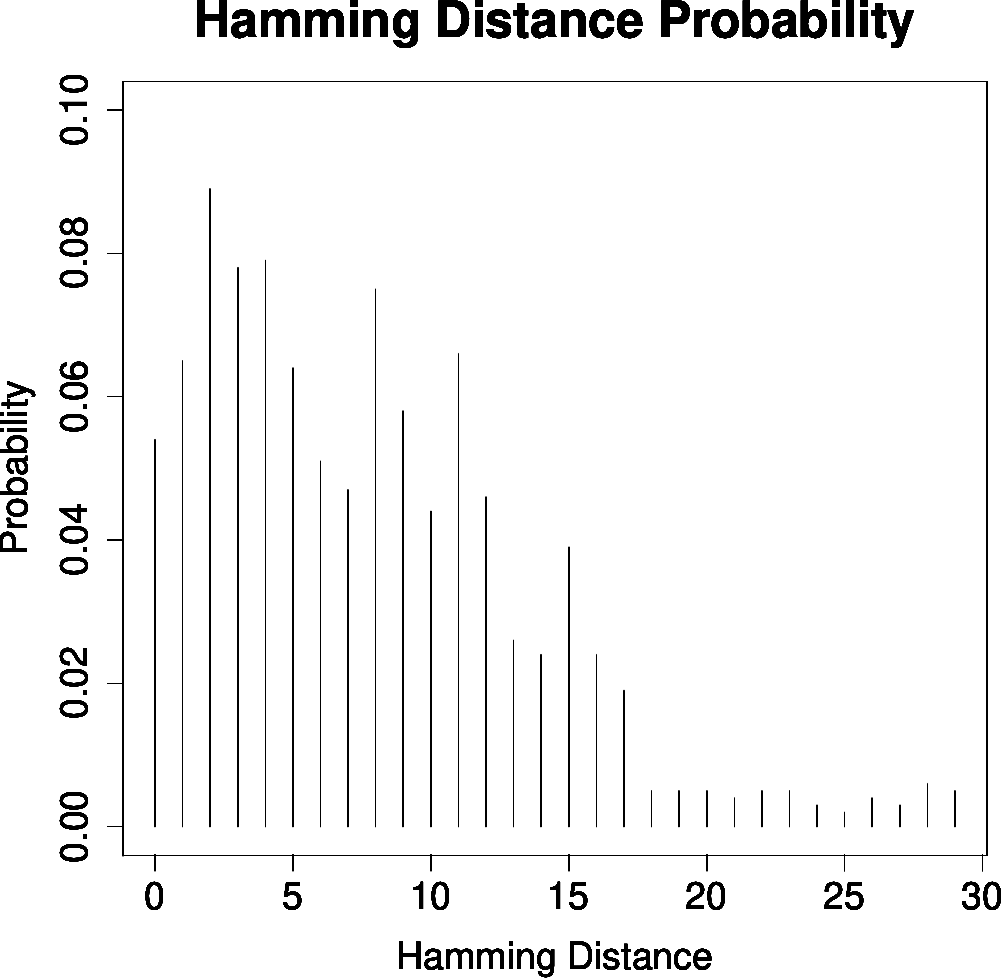}}
	\hspace{2mm}
	\subfigure [$p=30,n=35$.]{ \label{fig:b30}
	\includegraphics[width=4.5cm,height=4.5cm]{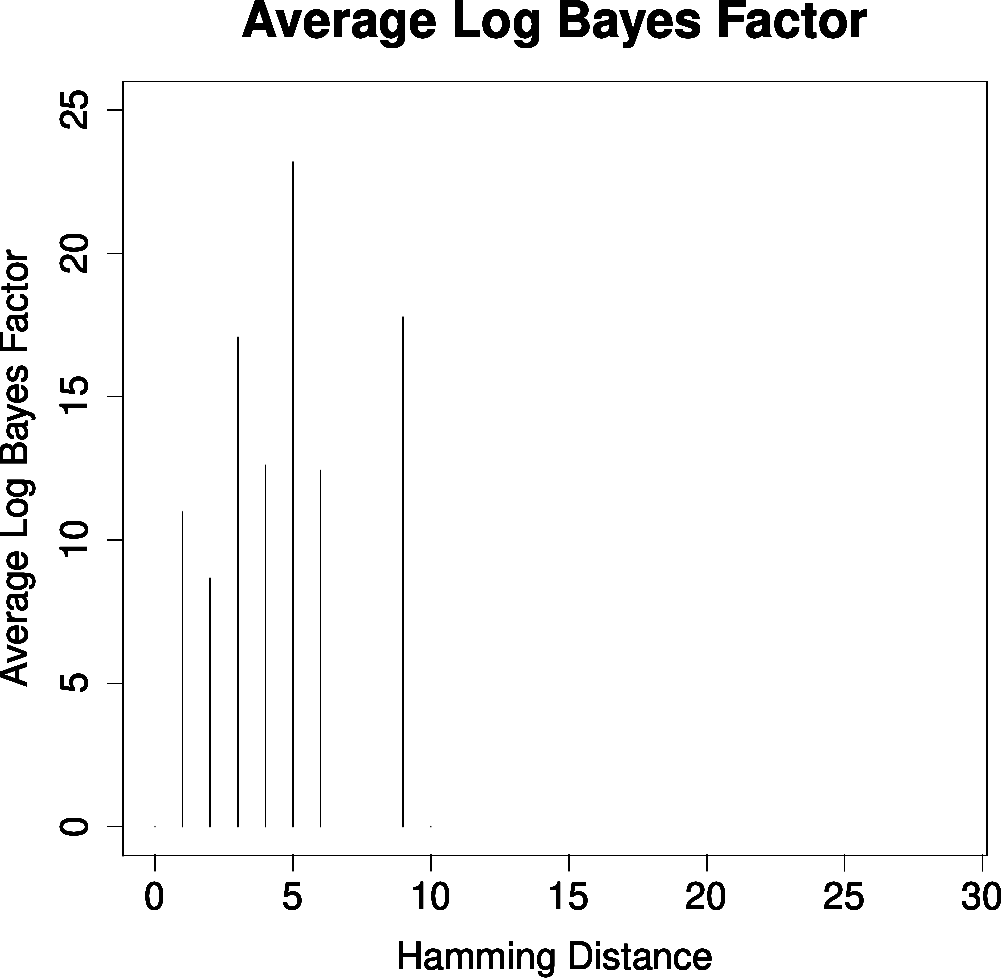}}
	\hspace{2mm}
	\subfigure [$p=30,n=35$.]{ \label{fig:c30}
	\includegraphics[width=4.5cm,height=4.5cm]{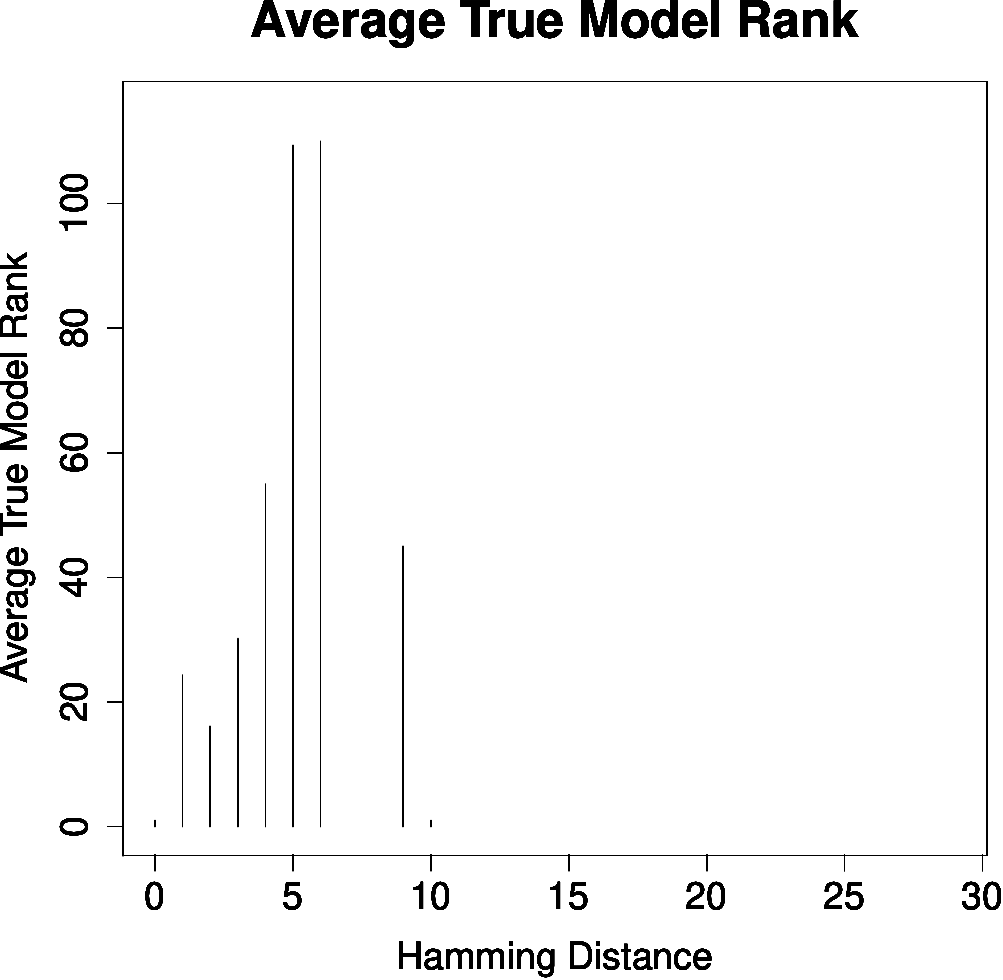}}
	\caption{Simulation study: Bayesian linear regression variable selection results.}
	\label{fig:linreg}
\end{figure}

\subsection{Gaussian process regression}
\label{subsec:gpreg_simstudy}

We now consider simulation experiments with Gaussian process regression as described in Section \ref{sec:gp_illustration} of MB.
That is, now the model that we consider is of the form $y=f(\bx_{\bs})+\epsilon$, where $f(\cdot)$ is modeled by a Gaussian process with
mean function $\mu\left(\bx_{\bs}\right)$ and squared exponential covariance kernel of the form 
\begin{eqnarray*}
 Cov\left(f(\bx_\bs), f(\bx_\bs^{\prime}) \right) = \sigma_f^2 \exp \left\{-\frac{1}{2} \left(\bx_\bs-\bx^{\prime}_\bs\right)^T 
	D_\bs \left(\bx_\bs-\bx^{\prime}_\bs\right)  \right\},
 \end{eqnarray*}
where $\sigma_f^2$ is the process variance and the diagonal elements of $D_{\bs}$
are the smoothness parameters.

Here the data are modeled as $y_i=f(\bx_{i,\bs})+\epsilon_i$, where, for $i=1,\ldots,n$, $\epsilon_i\stackrel{iid}{\sim}N(0,\sigma^2_{\e})$.
As before, we reparameterize $\sigma^2_\e$ as $\exp(-\tau)$; we also reparameterize $\sigma^2_f$ as $\exp(-\tau_f)$, where $\tau$ and $\tau_f$ are real parameters.
But unlike the linear regression case, here we assume that {\it a priori}, $\tau\sim N(\mu_{\tau},\sigma^2_{\tau})$ and $\tau_f\sim N(\mu_{\tau_f},\sigma^2_{\tau_f})$,
with $\mu_{\tau}=\mu_{\tau_f}=0$, $\sigma^2_{\tau}=0.5$ and $\sigma^2_{\tau_f}=0.1$, the variances reflecting the belief that uncertainty about the process variance
is less than that of the noise variance.

For $i=1,\ldots,|\bs|$, we reparameterize the $i$-th diagonal element of $D_{\bs}$ as $\exp(-\gamma_i)$, where we assume {\it a priori} that 
$\gamma_i\stackrel{iid}{\sim}N(0,\sigma^2_{\gamma})$, with $\sigma^2_{\gamma}=2$.

We model the mean function $\mu\left(\bx_{\bs}\right)$ as linear regression containing the intercept, that is, we set $\mu\left(\bx_{\bs}\right)=\bbeta'_{\bs}\bx_{\bs}$,
assuming that the first element of $\bx_{\bs}$ is $1$.
We consider the same Zellner-Siow prior form for $\bbeta_{\bs}$ as in Section \ref{subsec:linreg_simstudy}. 
As before, we standardize all the available covariates for model implementation with TTMCMC. 

Letting $\bgamma_{\bs}$ denote the vector of smoothness parameters, note that $\bbeta_{\bs}$ and $\bgamma_{\bs}$ are both variable-dimensional vectors, the dimensions of which
must be increased or decreased simultaneously. Recall that such updating provision is of course considered in our TTMCMC algorithm (Algorithm \ref{algo:ttmcmc}).
The prior for $k$ remains the same as in the linear regression setup.

The data simulation principle from the true model consisting of random sets of covariates and the formation of the covariates 
remain the same as in the linear regression case; here  
$(y_1,\ldots,y_n)$ is generated from the joint multivariate normal model dictated by the above Gaussian process setup, given $\bbeta_{\bs}$, $\bgamma_{\bs}$,
$\tau$ and $\tau_f$. We set $\tau=-\log(0.1)$ and $\tau_f=-\log(0.2)$, and simulate the elements of $\bbeta_{\bs}$ and $\bgamma_{\bs}$ from the zero mean normal 
distribution with variance $5$. 

As before, we consider the settings $(p=10,n=25)$, $(p=20,n=25)$ and $(p=30,n=35)$ for evaluating our Bayes factor based variable selection obtained via TTMCMC.
The TTMCMC algorithm in this Gaussian process setup is similar to that for linear regression, with the extra variable-dimensional parameter $\bgamma_{\bs}$
and the fixed-dimensional variable $\tau_f$ being accounted for. The procedure for updating these remain the same as before, in accordance with the details
provided in Algorithm \ref{algo:ttmcmc}.

As regards computation of $BIC(u)$ in this Gaussian process setup, we first obtain the least squares estimates $\hat\bbeta_{\bs}$ corresponding to $\bbeta_{\bs}$ 
pretending a linear regression context, and substitute $\hat\bbeta_{\bs}$ in the Gaussian process likelihood. Also, in the Gaussian process likelihood, we set
$\hat\gamma_i=0$, for $i=1,\ldots,|\bs|$, corresponding to $\bgamma_{\bs}$. Finally, we substitute the current values $\tau^{(t)}$ and $\tau^{(t)}_f$ 
for $\tau$ and $\tau_f$ in the likelihood, and proceed to compute the BIC version with these substitutions. 
As we shall demonstrate, our experiments reveal that this method yields
quite reliable propositions for the new covariates. However, computation of $BIC(u)$ is quite demanding in this setup due to the requirement of $n\times n$-order 
matrix inversions for every $u$. Thus, in this setup, for $p$ even moderately large, our TTMCMC takes considerably more implementation time than for linear regression.
Indeed, for $(p=10,n=25)$ the time taken is about $26$ minutes for a typical run, and for $(p=20,n=25)$ and $(p=30,n=35)$, the respective run times are
about $46$ minutes and $2$ hours $22$ minutes. We parallelise our simulation experiments consisting of $1000$ TTMCMC runs for each $(p,n)$ pair in the same way as in the linear
regression setup.

The overall acceptance rate, birth rate, death rate and the no-change rate for a typical TTMCMC run in the $(p=10,n=25)$ case are about
$0.182$, $0.014$, $0.014$ and $0.518$, respectively. For $(p=20,n=25)$, these numbers are $0.217$, $0.065$, $0.065$ and $0.522$, while
in the $(p=30,n=35)$ scenario, the respective rates are $0.201$, $0.046$, $0.046$ and $0.511$.
Again, these rates are computed on the basis of $9\times 10^6$ TTMCMC realizations.

\subsubsection{Results of the Gaussian process regression simulation experiments}
The results of our variable selection method in the Gaussian process setup are encapsulated in Figure \ref{fig:gpreg}. Note that although for $(p=10,n=25)$
the Hamming distance assigns significantly higher probability to $0$ compared to all the other values, in the other two more challenging scenarios $(p=20,n=25)$
and $(p=30,n=35)$, this good performance is not kept up. This observation is in line with the average log-Bayes factors and the average true model ranks, as
displayed in Figure \ref{fig:gpreg}. Thus, compared to the linear regression setup, our variable selection methods in 
the Gaussian process regression setup seems to be less robust with respect to increasing dimensions. This is, however, not unexpected due to the structured
dependence in the Gaussian process regression datasets.
\begin{figure}
	\centering
	\subfigure [$p=10,n=25$.]{ \label{fig:a10_gp}
	\includegraphics[width=4.5cm,height=4.5cm]{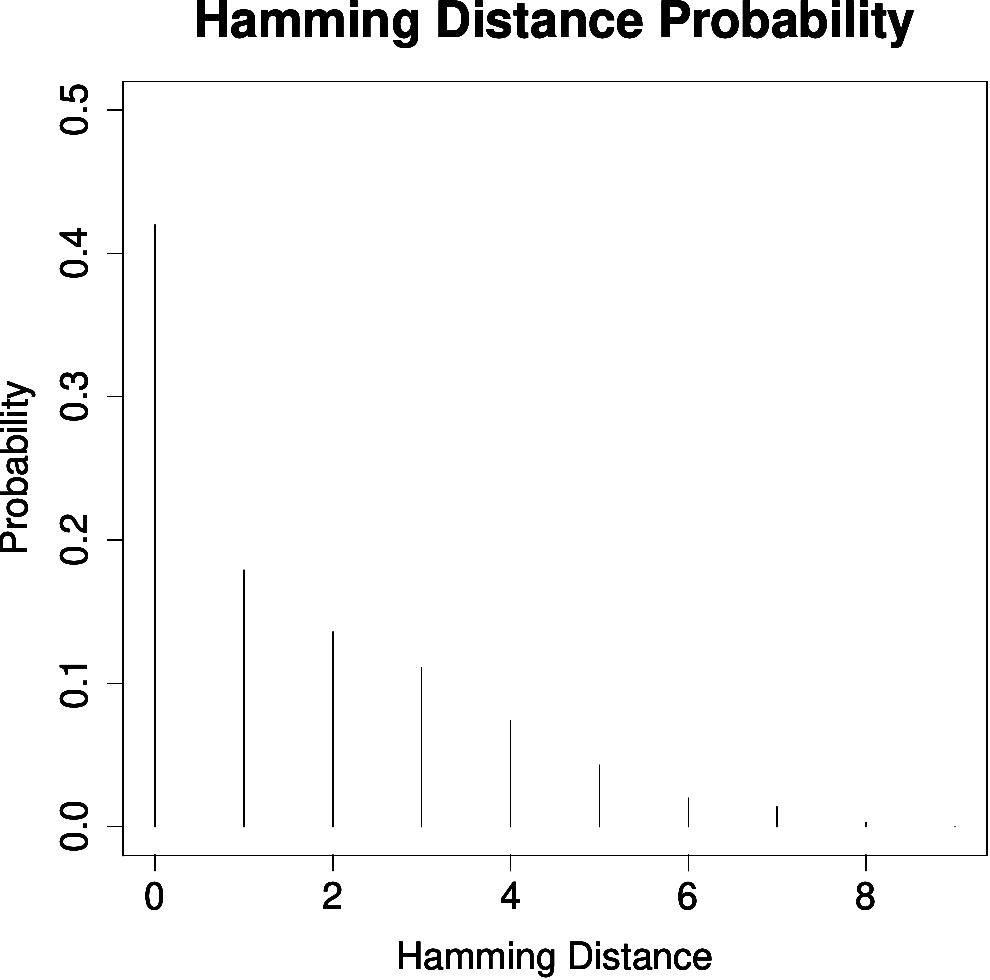}}
	\hspace{2mm}
	\subfigure [$p=10,n=25$.]{ \label{fig:b10_gp}
	\includegraphics[width=4.5cm,height=4.5cm]{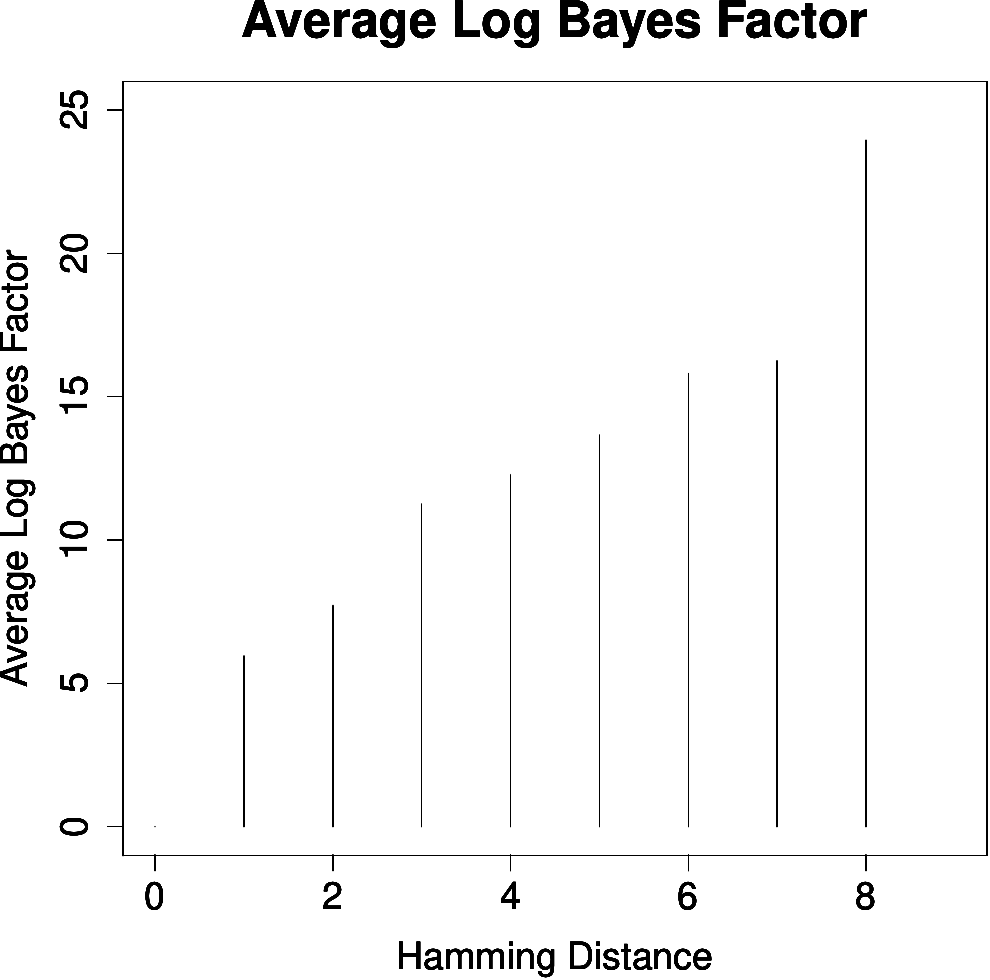}}
	\hspace{2mm}
	\subfigure [$p=10,n=25$.]{ \label{fig:c10_gp}
	\includegraphics[width=4.5cm,height=4.5cm]{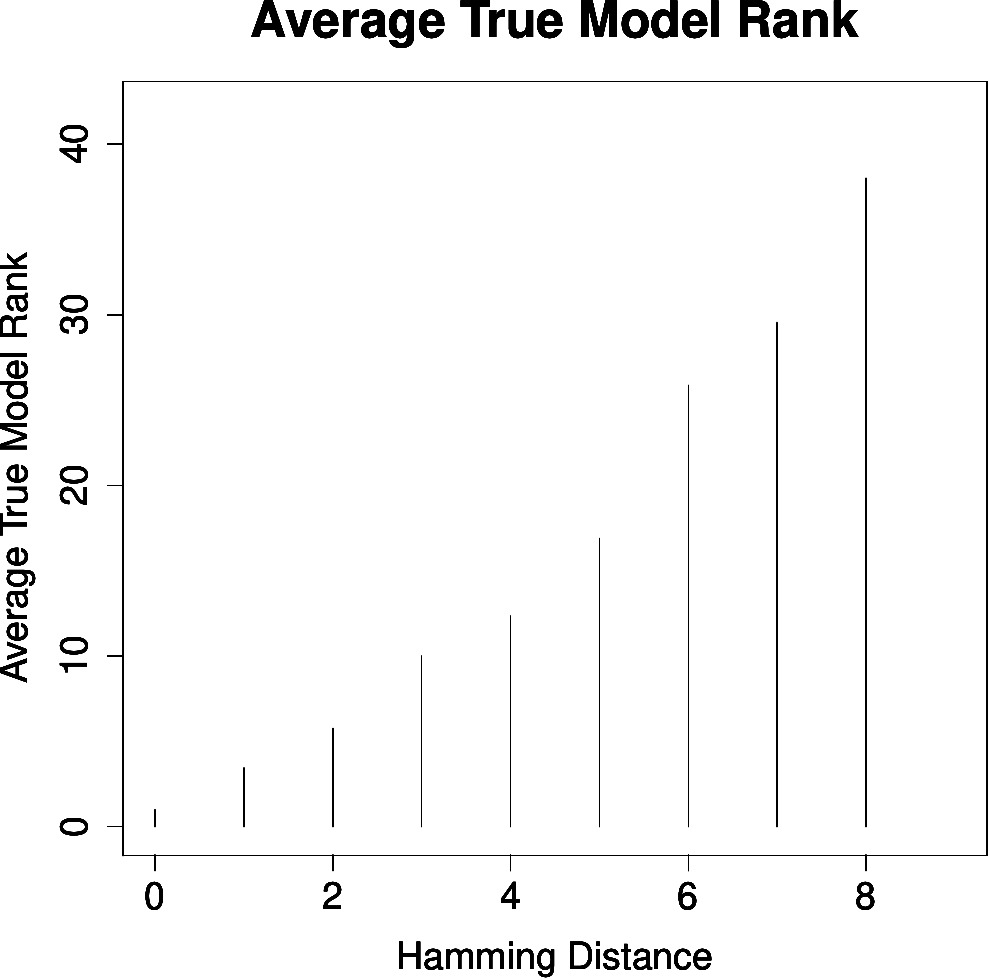}}\\
	\vspace{2mm}
	\subfigure [$p=20,n=25$.]{ \label{fig:a20_gp}
	\includegraphics[width=4.5cm,height=4.5cm]{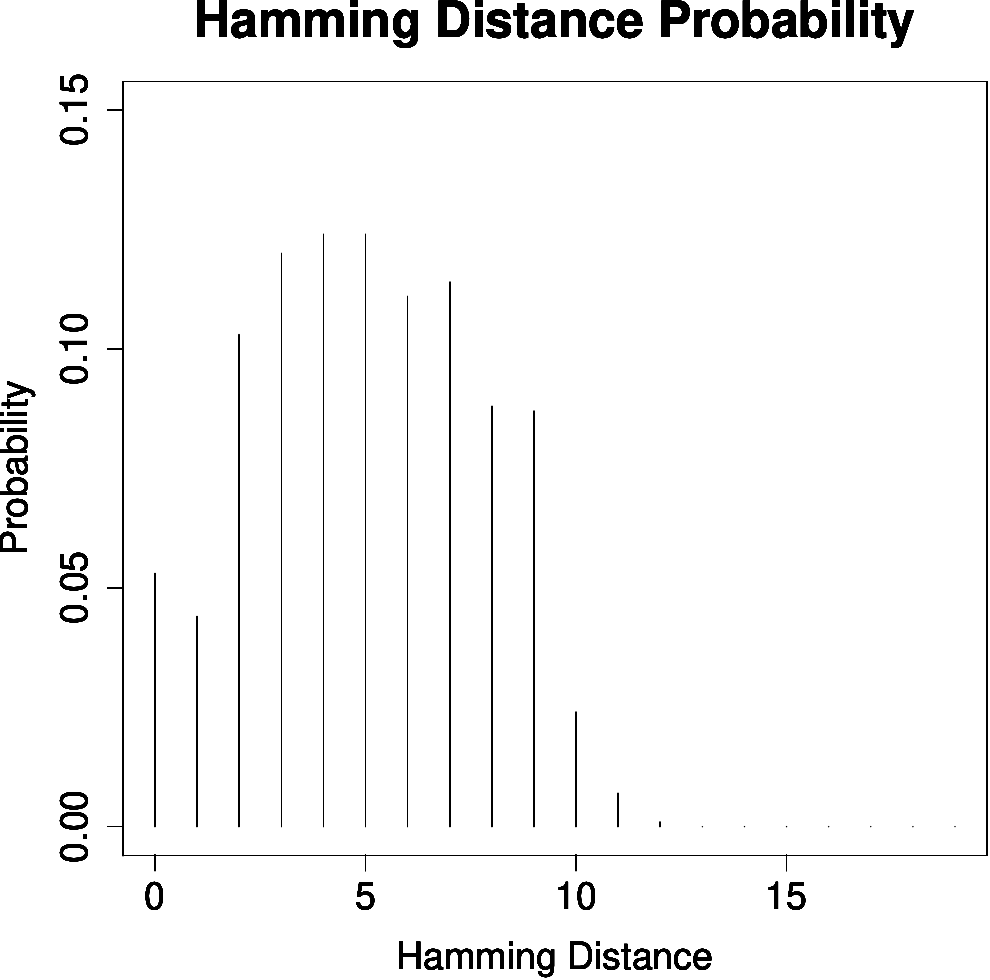}}
	\hspace{2mm}
	\subfigure [$p=20,n=25$.]{ \label{fig:b20_gp}
	\includegraphics[width=4.5cm,height=4.5cm]{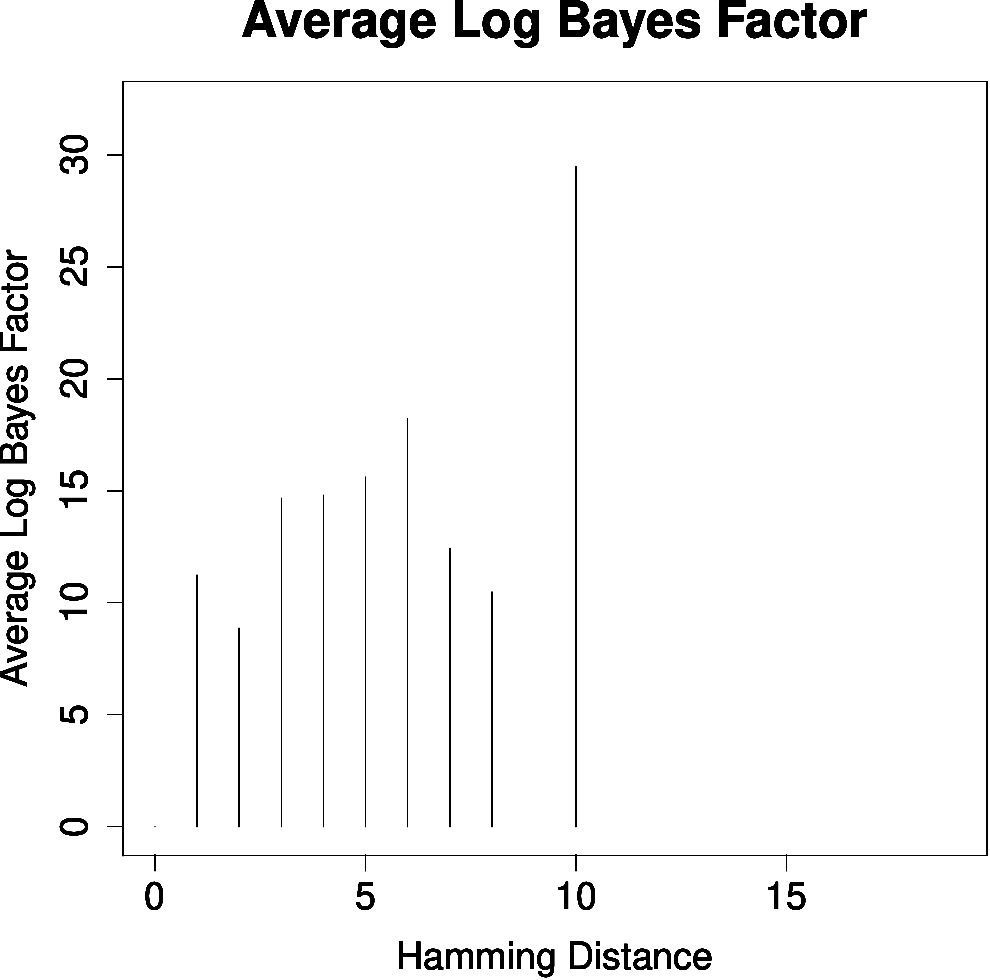}}
	\hspace{2mm}
	\subfigure [$p=20,n=25$.]{ \label{fig:c20_gp}
	\includegraphics[width=4.5cm,height=4.5cm]{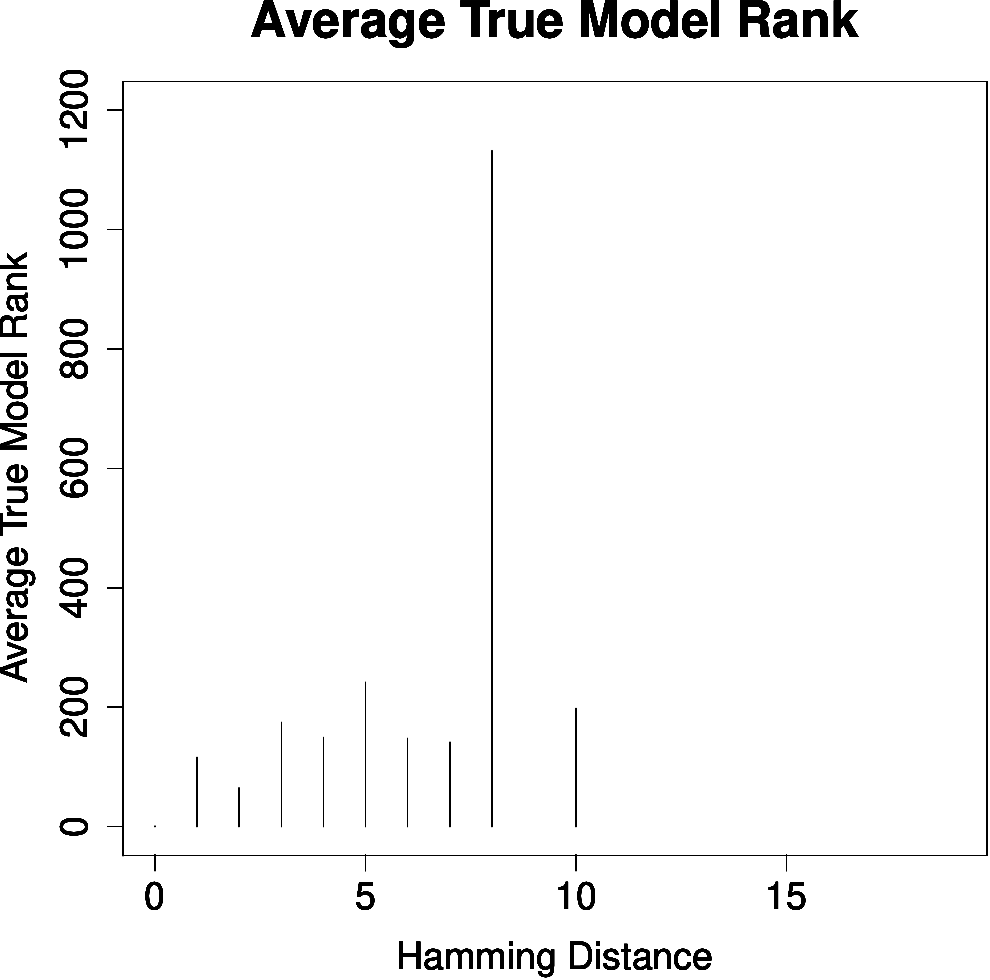}}\\
	\vspace{2mm}
	\subfigure [$p=30,n=35$.]{ \label{fig:a30_gp}
	\includegraphics[width=4.5cm,height=4.5cm]{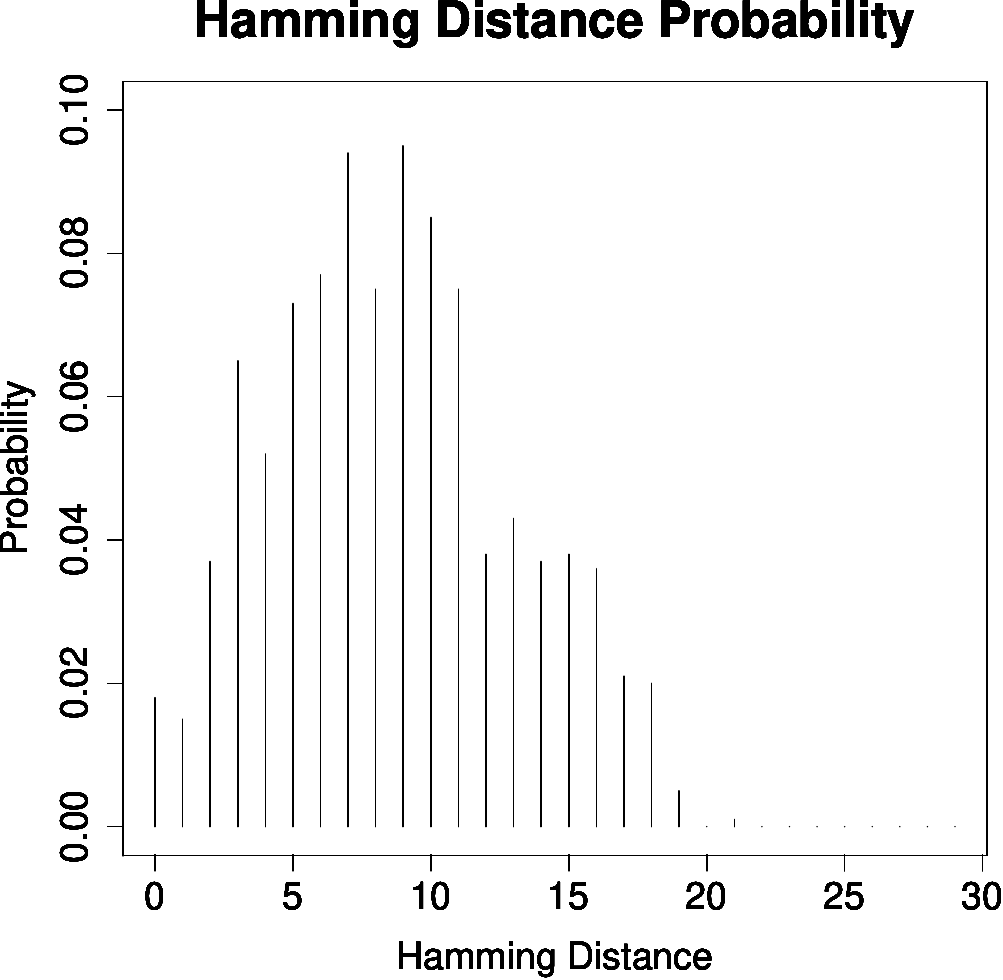}}
	\hspace{2mm}
	\subfigure [$p=30,n=35$.]{ \label{fig:b30_gp}
	\includegraphics[width=4.5cm,height=4.5cm]{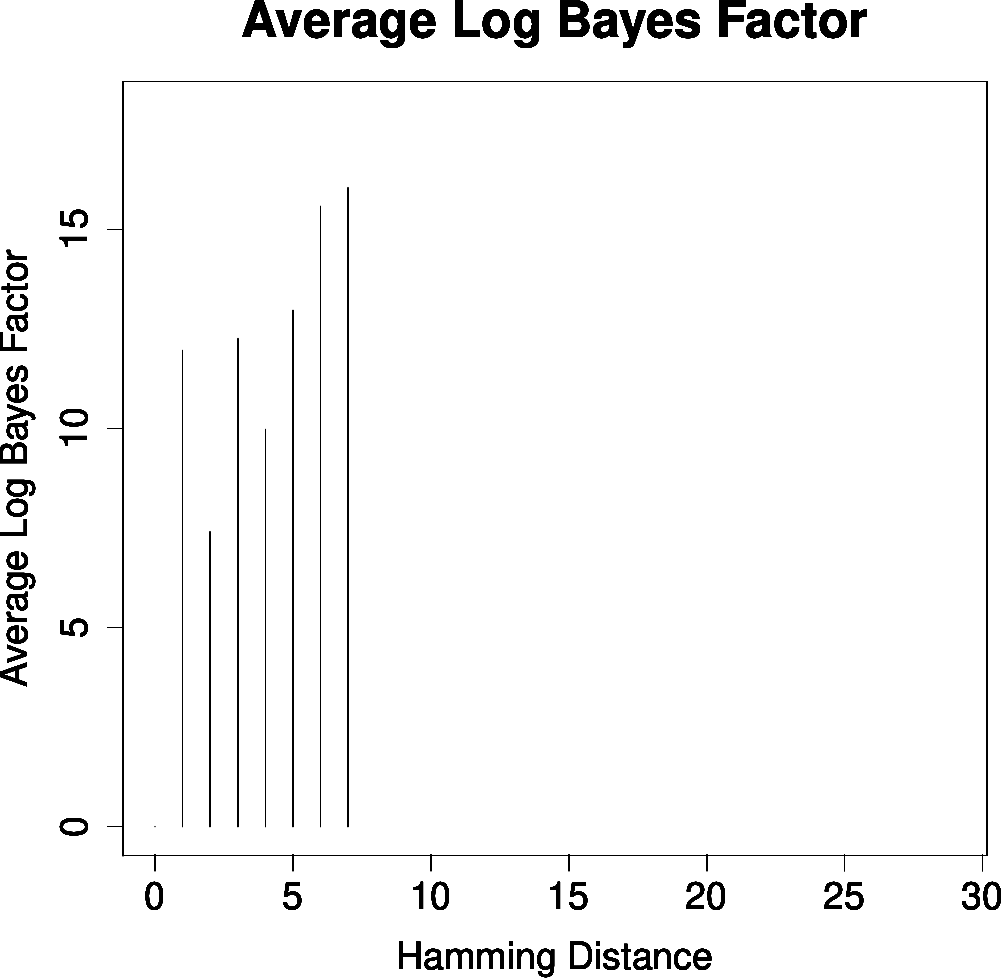}}
	\hspace{2mm}
	\subfigure [$p=30,n=35$.]{ \label{fig:c30_gp}
	\includegraphics[width=4.5cm,height=4.5cm]{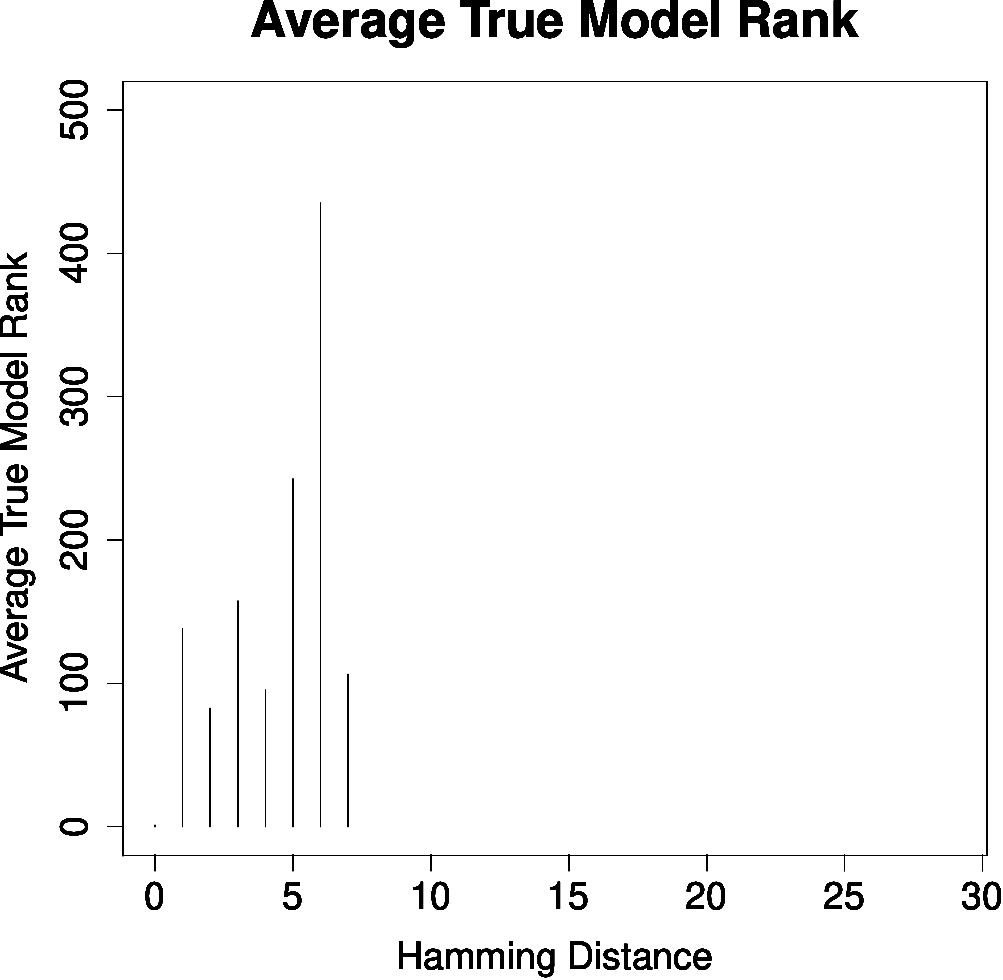}}
	\caption{Simulation study: Bayesian Gaussian process regression variable selection results.}
	\label{fig:gpreg}
\end{figure}

\subsection{AR(1) regression}
\label{subsec:tsreg_simstudy}

Now let us consider the variable selection problem in the following $AR(1)$ context, the Bayes factor asymptotics of which is detailed in Section \ref{subsec:time_series1}:
\begin{align*}
y_t=\rho y_{t-1}+\bbeta_{\bs}^{\prime} \bx_{t,\bs}+\epsilon_t,\quad \mbox{and} ~~
\epsilon_t \stackrel{iid}{\sim}N\left(0,\sigma^2_{\epsilon}\right),\quad \mbox{for}~~ t=1,\ldots,n,
\end{align*}
where $y_0\equiv 0$ and $|\rho|<1$.
We reparameterize $\rho$ as $\rho=-1+2\exp(\tilde\rho)/(1+\exp(\tilde\rho))$, and assume that $\tilde\rho\sim N(0,1)$. As before we consider the
Zellner-Siow prior for $\bbeta_{\bs}$ and reparameterize $\sigma^2_{\e}$ as $\exp(-\tau)$ and use the log-prior form (\ref{eq:logprior_tau}), with
$\alpha_{\tau}=\lambda_{\tau}=0.01$. For data generation, we generate $\rho$ from $U(-1,1)$.

To form the likelihood, we considered the product $\prod_{t=1}^n[y_t|y_{t-1}]$, where $[y_t|y_{t-1}]$ stands for the distribution (\ref{eq:ar1_model}) of MB.
We did not consider the correlated error form 
$y_t=\bbeta_{\bs}^{\prime} \bz_{t,\bs}+\tilde\epsilon_t$ considered in Section \ref{subsec:time_series1} for likelihood formation as this would involve 
a multivariate normal distribution which would require $n\times n$ matrix inversions in each step of TTMCMC.

For $BIC(u)$, first note that given the value of $\rho$ at the current TTMCMC iteration, $y_1$ and $y_t-\rho y_{t-1}$ for $t\geq 2$ has a linear regression form, using which we
compute the least squares estimator of the regression coefficients. The rest of the $BIC(u)$ minimization procedure is the same as in the linear regression setup.

Indeed, the remaining methodological and implementation details are also akin to the linear regression situation. 
In this case, typical TTMCMC runs for $(p=10,n=25)$, $(p=20,n=25)$ and $(p=30,n=35)$ took about $9$ minutes, $14$ minutes and $36$ minutes, respectively.
The overall acceptance rate, birth rate, death rate and no-change rates in the first case was about $0.553$, $0.552$, $0.564$ and $0.544$, respectively.
In the second and third scenarios they were $(0.582,0.602,0.601,0.543)$ and $(0.580,0.605,0.605,0.533)$, respectively.

\subsubsection{Results of the AR(1) regression simulation experiments}

Figure \ref{fig:tsreg} displays the results of our variable selection experiments in the AR(1) context. Note that for $(p=10,n=25)$, the Hamming distance
gives the highest probability to $0$, which is also significantly higher than those for the other values. However, as in the Gaussian process regression
experiments, here also the situation deteriorates for 
$(p=20,n=25)$ and $(p=30,n=35)$, as the Hamming distance concentrates around larger and larger values for the latter two scenarios.
This gradual worsening of the performance is also reflected in the respective average log-Bayes factors and the average true model ranks, demonstrating that
compared to linear regression, variable selection in time series regression is a much more delicate problem, with marked sensitivity with respect to larger dimensions.

At the first glance, this lack of robustness with respect to dimension might seem surprising since the structure of AR(1) regression closely resembles that of
linear regression. However, recall from Section \ref{subsec:time_series1} of MB that although the data $y_t$ can be written in a linear regression form with modified
covariate structures involving $\rho$, the regression errors in such as case are correlated, rendering the AR(1) setup a Gaussian process structure. Hence, it 
is not surprising that our AR(1) regression results are much more in resemblance with our Gaussian process regression results, as compared to the linear regression
results, when $p=20$ and $30$.

\begin{figure}
	\centering
	\subfigure [$p=10,n=25$.]{ \label{fig:a10_ts}
	\includegraphics[width=4.5cm,height=4.5cm]{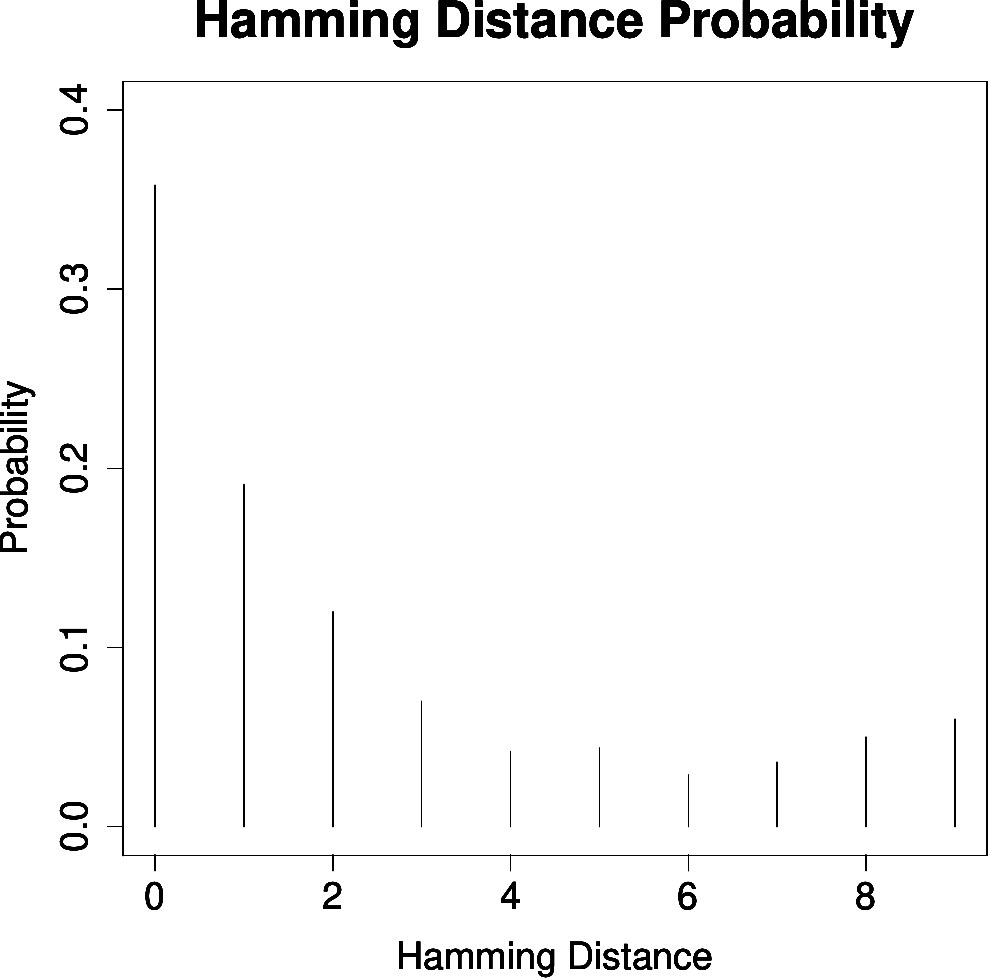}}
	\hspace{2mm}
	\subfigure [$p=10,n=25$.]{ \label{fig:b10_ts}
	\includegraphics[width=4.5cm,height=4.5cm]{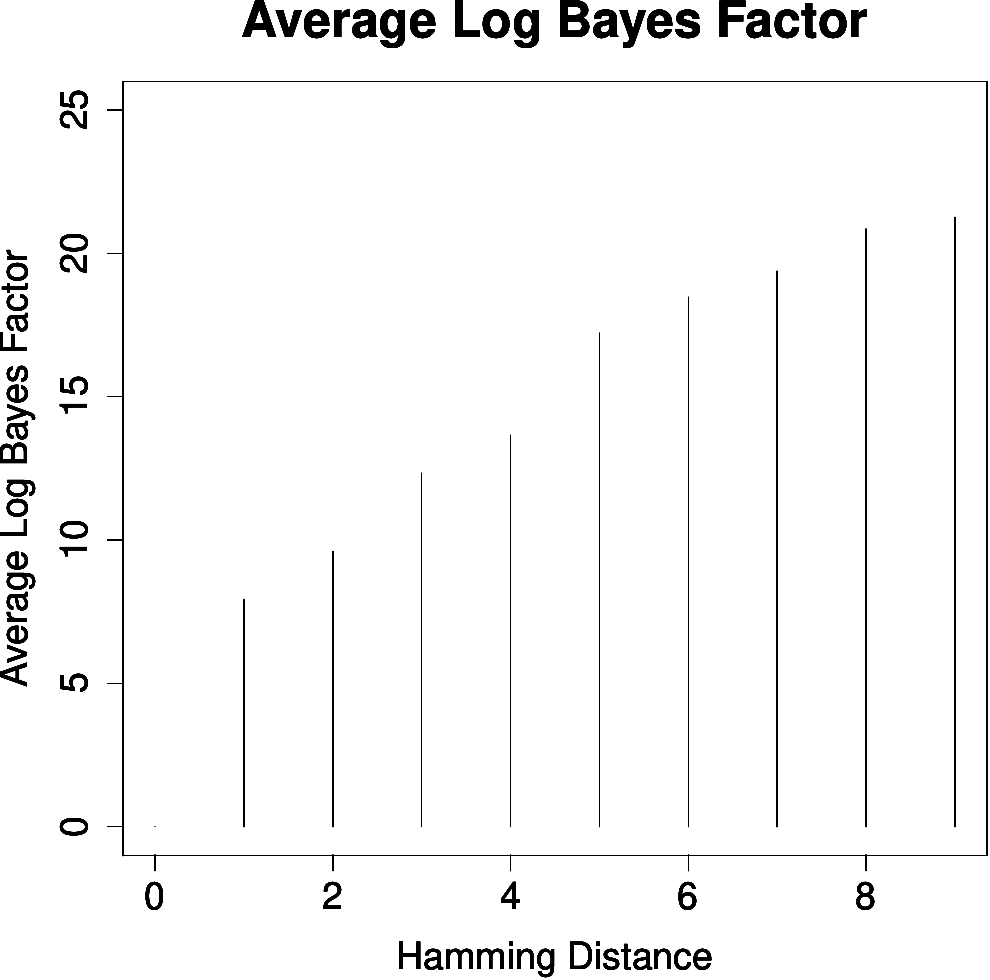}}
	\hspace{2mm}
	\subfigure [$p=10,n=25$.]{ \label{fig:c10_ts}
	\includegraphics[width=4.5cm,height=4.5cm]{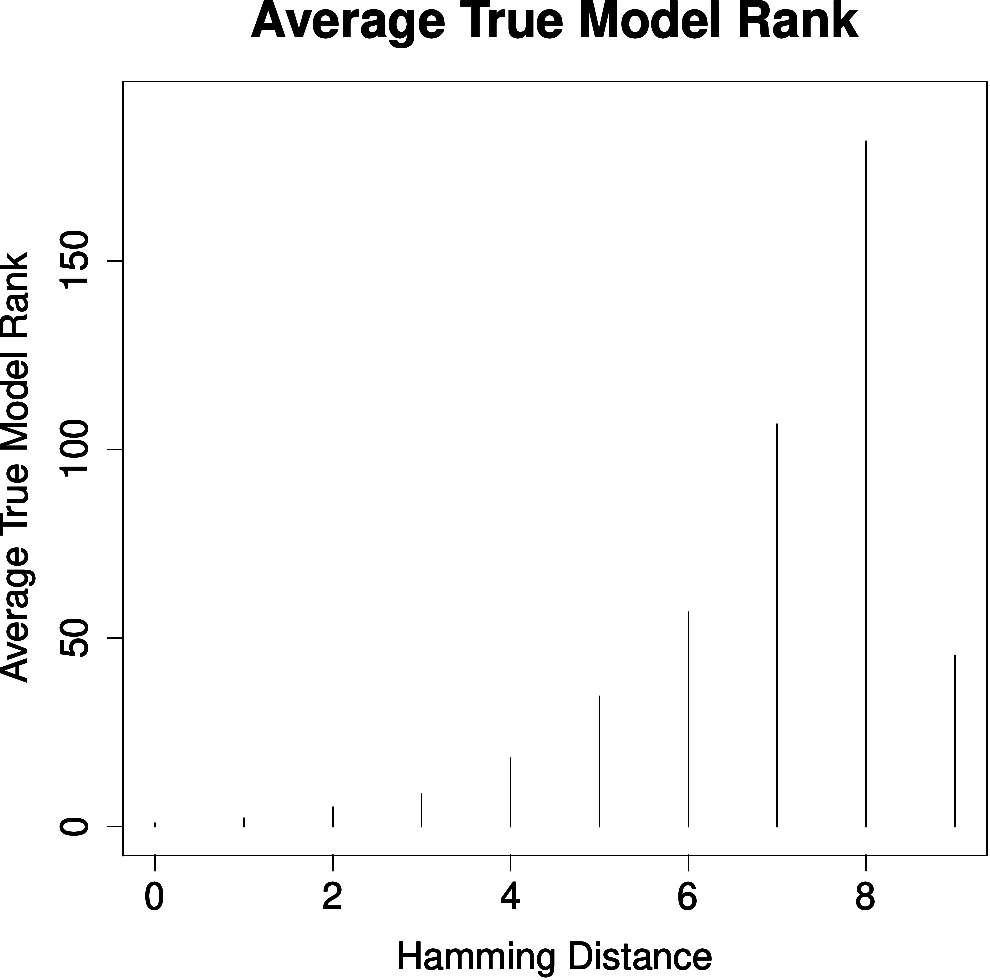}}\\
	\vspace{2mm}
	\subfigure [$p=20,n=25$.]{ \label{fig:a20_ts}
	\includegraphics[width=4.5cm,height=4.5cm]{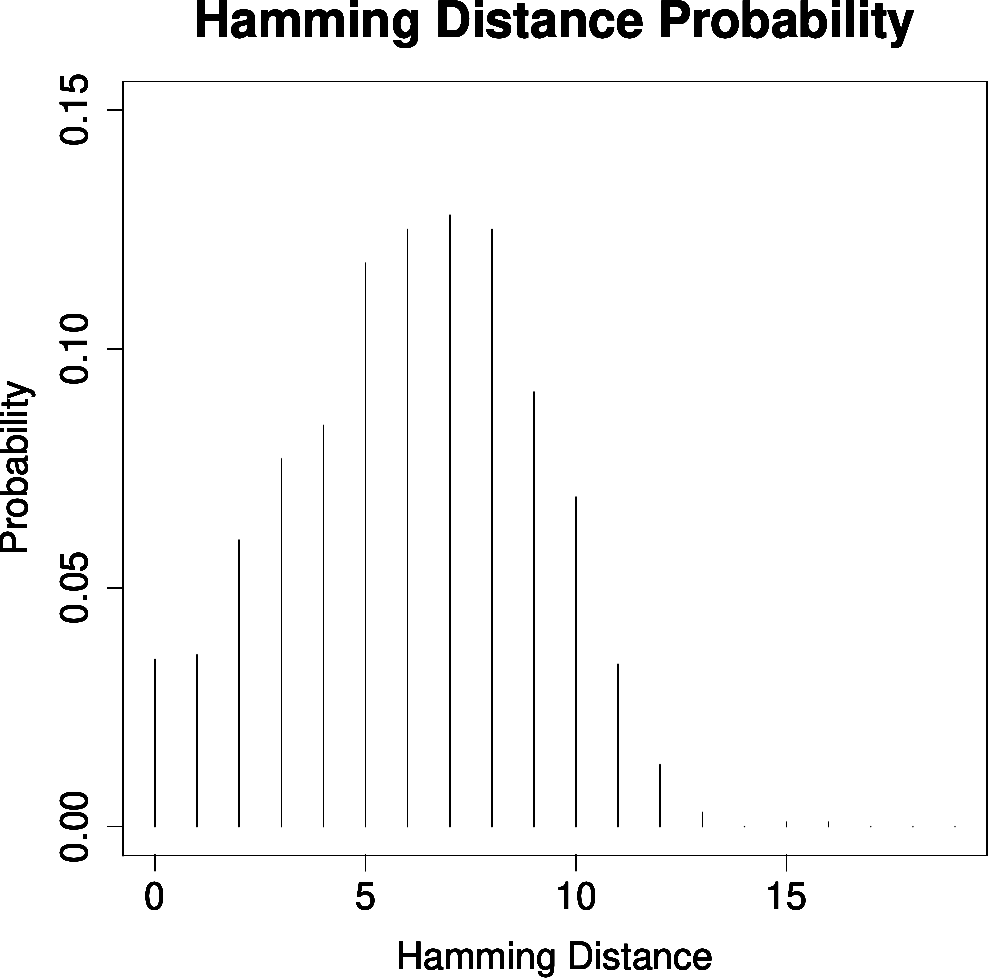}}
	\hspace{2mm}
	\subfigure [$p=20,n=25$.]{ \label{fig:b20_ts}
	\includegraphics[width=4.5cm,height=4.5cm]{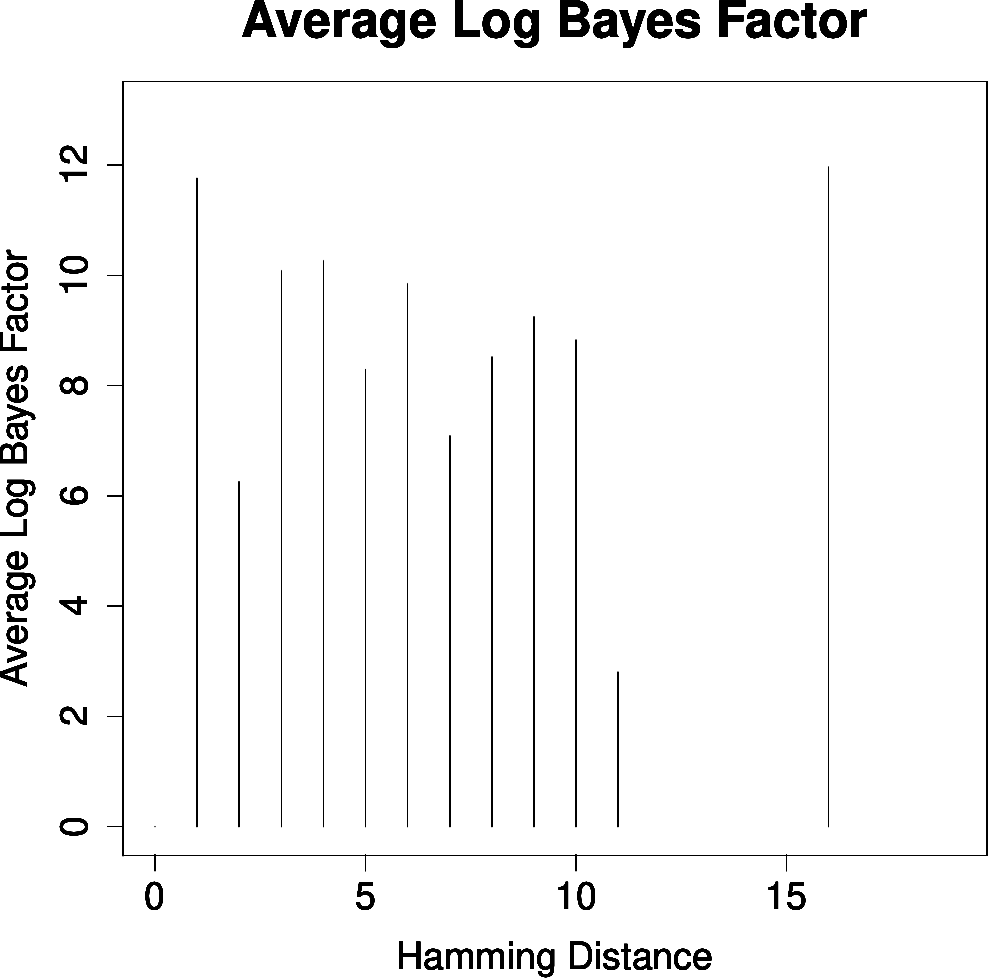}}
	\hspace{2mm}
	\subfigure [$p=20,n=25$.]{ \label{fig:c20_ts}
	\includegraphics[width=4.5cm,height=4.5cm]{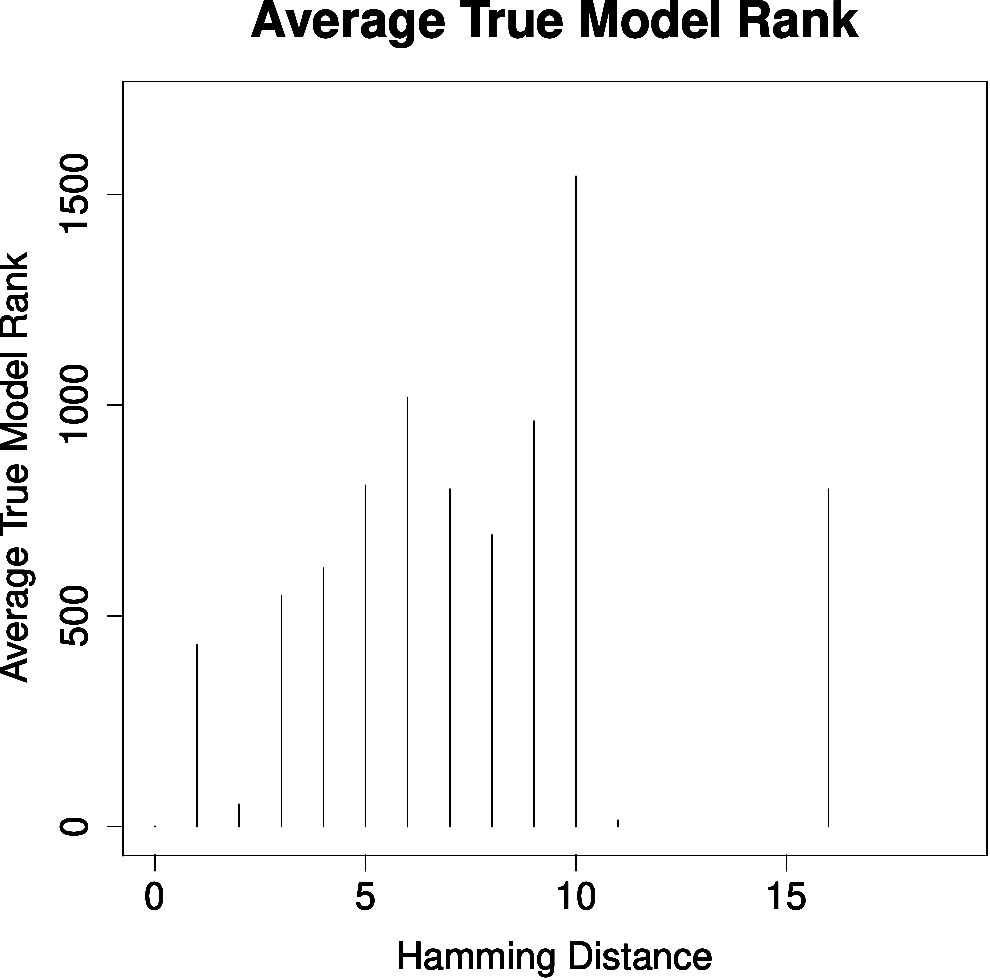}}\\
	\vspace{2mm}
	\subfigure [$p=30,n=35$.]{ \label{fig:a30_ts}
	\includegraphics[width=4.5cm,height=4.5cm]{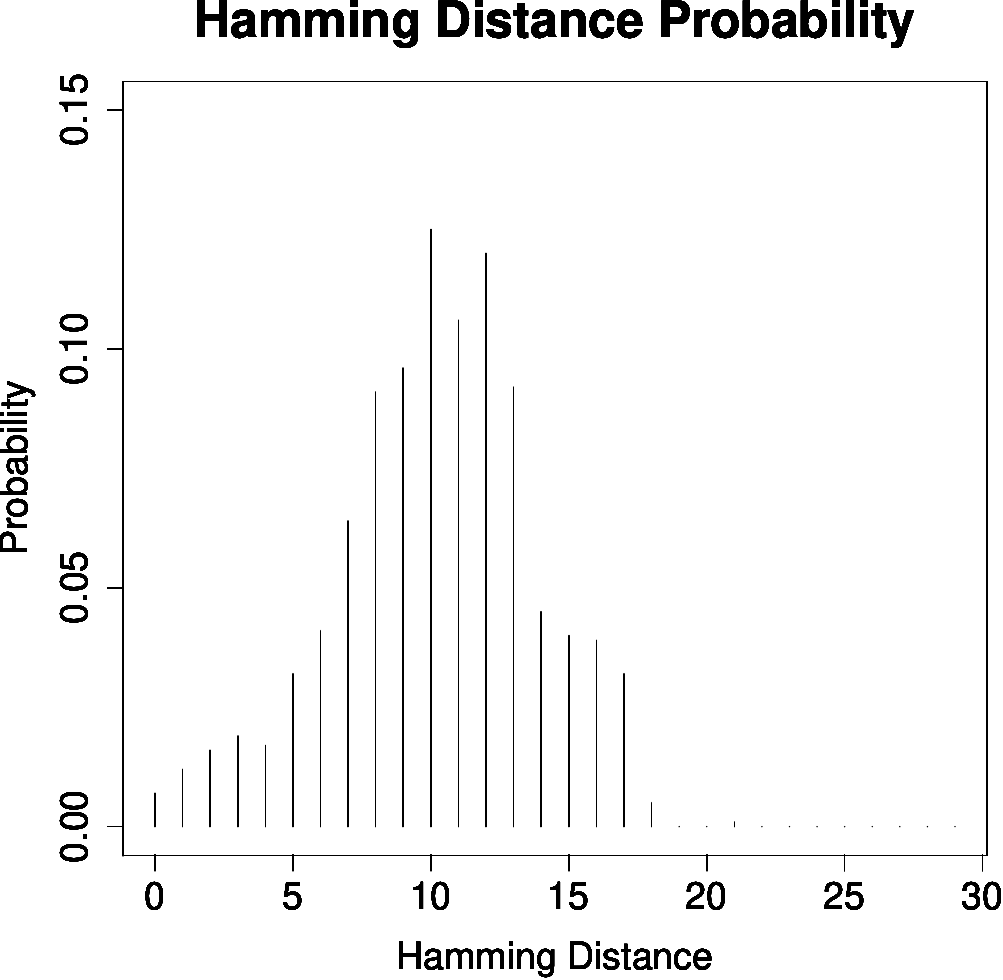}}
	\hspace{2mm}
	\subfigure [$p=30,n=35$.]{ \label{fig:b30_ts}
	\includegraphics[width=4.5cm,height=4.5cm]{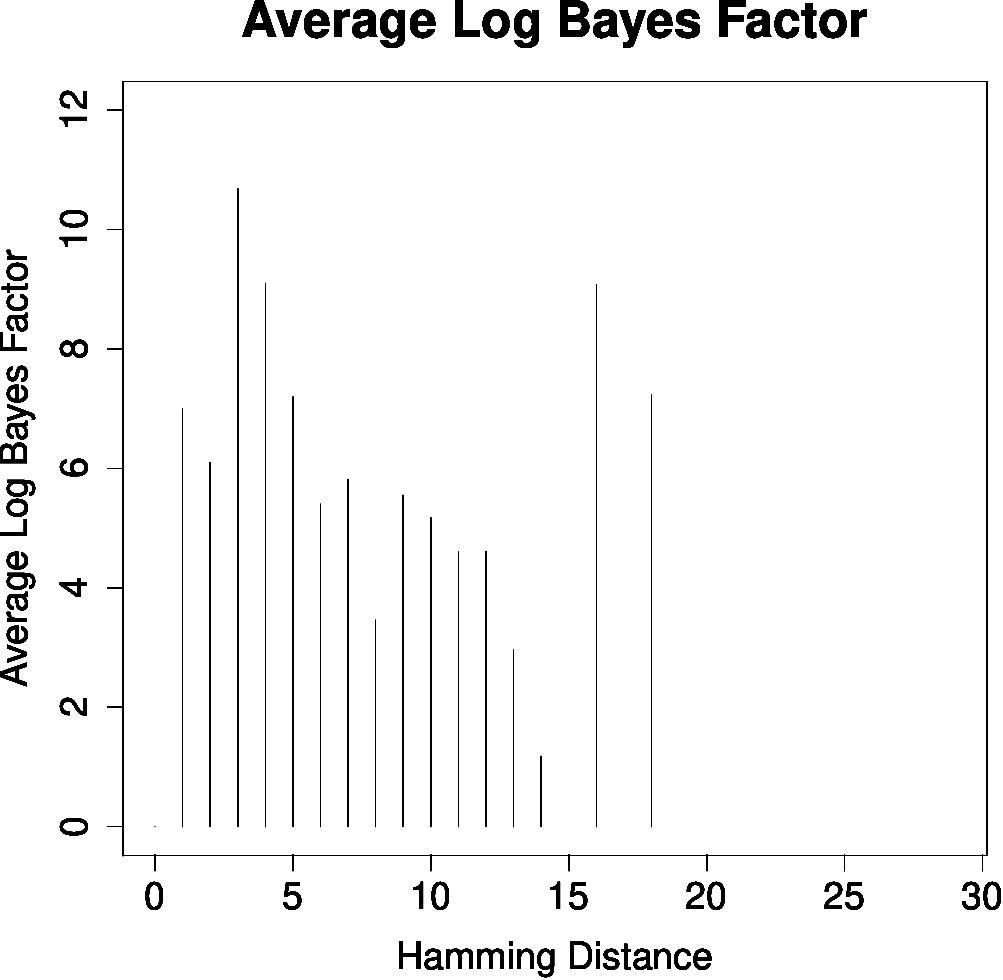}}
	\hspace{2mm}
	\subfigure [$p=30,n=35$.]{ \label{fig:c30_ts}
	\includegraphics[width=4.5cm,height=4.5cm]{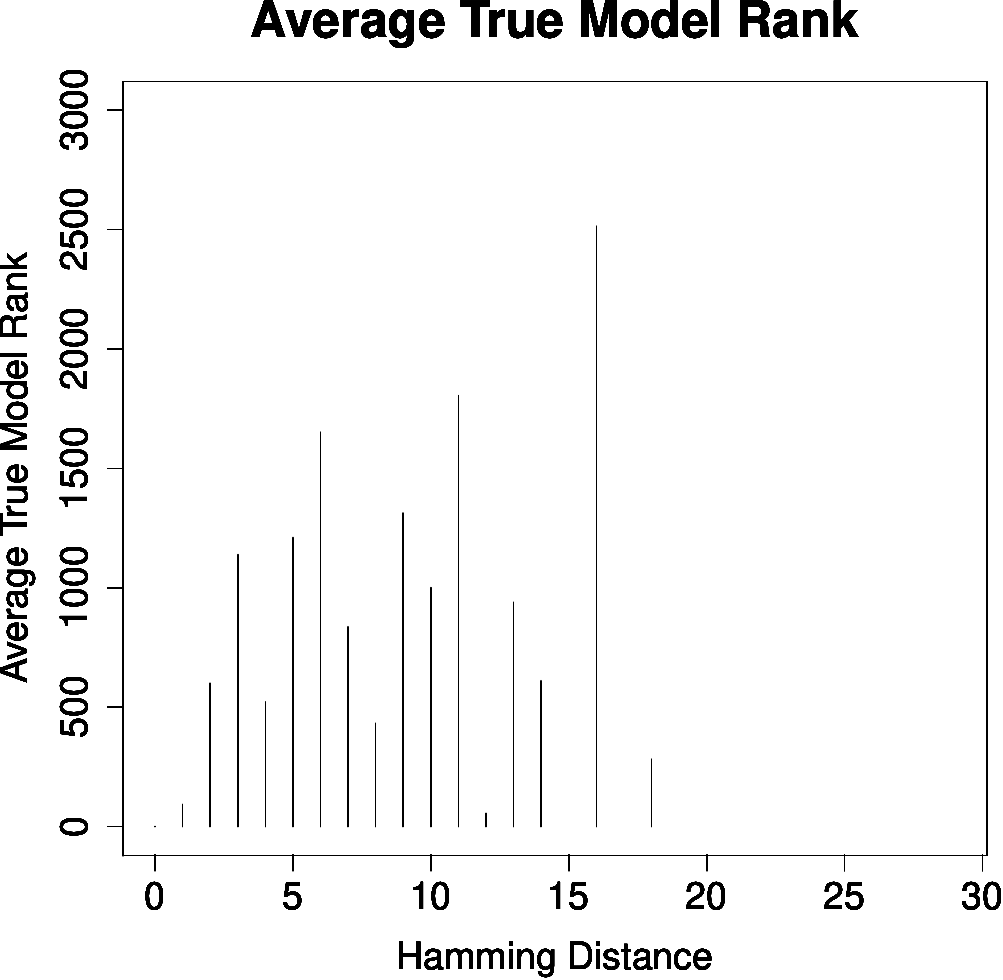}}
	\caption{Simulation study: Bayesian AR(1) regression variable selection results.}
	\label{fig:tsreg}
\end{figure}

\section{Variable selection in a real riboflavin dataset}
\label{sec:realdata}

Let us now apply our TTMCMC based variable selection procedure to a real 
dataset on riboflavin (vitamin $B_2$) production rate, which has been made public by \ctn{Buhl14}. 
The response variable in this dataset is the log-transformed riboflavin production rate corresponding to $p = 4088$ possible covariates measuring the logarithm of
the expression level of $4088$ genes. The sample size is $n=71$. Covariate selection of this dataset using classical methods based on linear regression model 
has been performed by \ctn{Javan14} who report two significant genes YXLD\_at and YXLE\_at.
\ctn{Mein09} on the other hand, found only YXLD\_at to be significant, while the method of \ctn{Buhl13} found no significant gene. Thus, based on linear regression models
and classical methods of variable selection employed so far, either no gene, one gene or two genes, are found to be significant.

We apply our Bayes factor based covariate selection technique using TTMCMC to this dataset, considering both linear and Gaussian process regression.
We consider the same setups as in our simulation experiments, with the same models and priors, with some variation in the prior for $k$, to account for the 
uncertainty with respect to the large number of available covariates. Specifically, with the same discrete normal prior for $k$, we consider the choices of 
$(\mu_k,\sigma^2_k)$ to be $(25,10)$ and $(40,10)$, respectively. We allow a maximum of $50$ covariates in the model, since, as is clear from the aforementioned past 
analyses of this dataset, too many genes can not be significant. In keeping with this, the maximum number of covariates in our posterior simulations turned out to be 
less than $10$ in all our setups for this real data.

As in the simulation experiments, we discard the first $10^4\times 150$ TTMCMC realizations as burn-in and store every $150$-th realization in the next $5\times 10^4\times 150$
iterations, to obtain $5\times 10^4$ TTMCMC realizations for our inference. For the additive transformation, the scales $a_{\theta,j}$ and the constant $c$ in the
mixing-enhancing step are chosen in the same way as in the simulation studies in the $p=30$ setups.
Since for Gaussian process regression minimization of $BIC(u)$ in the TTMCMC step is computationally too demanding for $p=4088$ covariates, we replace
this $BIC(u)$ with that used for linear regression. Theoretically, this is a perfectly valid procedure,  
and our TTMCMC results demonstrate very reasonable final selection of the covariates via Bayes factor.

\subsection{Results for the linear regression model}
For the linear regression model with $(\mu_k,\sigma^2_k)=(25,10)$, the overall acceptance rate, birth rate, death rate, no-change rate turned out to be
$(0.187,0.002,0.003,0.497)$, and the implementation time is about $12$ hours and $11$ minutes. Our Bayes factor computation based on TTMCMC yielded the following best
set of $4$ covariates: (ARGB\_at, EXOA\_at, SIGY\_at, YOAB\_at).

When $(\mu_k,\sigma^2_k)=(40,10)$, the overall acceptance rate, birth rate, death rate, no-change rate are
$(0.175,0.001,0.001,0.496)$ and the time taken is about $11$ hours $33$ minutes.
In this case, the following set of $7$ covariates turned out to be the best:
(ARGB\_at, YDAR\_at, YHDZ\_at, YJIA\_at, YOAB\_at, YUZF\_at, YXLD\_at).
The covariates common to both $(\mu_k,\sigma^2_k)=(25,10)$ and $(\mu_k,\sigma^2_k)=(40,10)$ are (ARGB\_at,YOAB\_at), which does not
contain the covariates found significant by \ctn{Javan14} or \ctn{Mein09}.
More specifically, although the case $(\mu_k,\sigma^2_k)=(40,10)$ contains YXLD\_at, which has been found to be significant by both \ctn{Javan14} and \ctn{Mein09}, none of 
the cases $(\mu_k,\sigma^2_k)=(25,10)$ or $(\mu_k,\sigma^2_k)=(40,10)$ finds YXLE\_at, declared as significant by \ctn{Javan14}.

\subsection{Results for the Gaussian process regression model}

For the Gaussian process regression model with $(\mu_k,\sigma^2_k)=(25,10)$, the implementation time is about $8$ hours $47$ minutes 
and the overall acceptance rate, birth rate, death rate, no-change rate are $(0.185,0.012,0.012,0.530)$.
The following $4$ covariates are selected as the best by our TTMCMC based Bayes factor: (ARGB\_at, YHDZ\_at, YOAB\_at, YXLD\_at).

For $(\mu_k,\sigma^2_k)=(40,10)$, the implementation time was about $9$ hours $56$ minutes and the 
overall acceptance rate, birth rate, death rate, no-change rate are $(0.193,0.024,0.024,0.529)$.
Remarkably, here we obtain exactly the same set of covariates (ARGB\_at, YHDZ\_at, YOAB\_at, YXLD\_at), as for
$(\mu_k,\sigma^2_k)=(25,10)$, as the best set of covariates, which exhibits considerable robustness of the Gaussian process regression model with respect

\subsection{Comparison of the results for the linear regression and the Gaussian process regression models}
The results demonstrate that compared to the linear regression model for this data, 
the Gaussian process regression is far more robust with respect to the prior for $k$; moreover, it leads
to much parsimony compared to linear regression, as can be easily seen from the cardinalities of the best covariate sets.

Note that YOAB\_at is common to all our linear regression and Gaussian process regression implementations and hence we consider this to be an important discovery. 
Also, YXLD\_at is common to our Gaussian process and
linear regression implementations with $(\mu_k,\sigma^2_k)=(40,10)$. Since this gene is found to be significant by \ctn{Javan14} and \ctn{Mein09} as well,
it seems that this may also be an important discovery. But the YXLE\_at gene, although declared significant by \ctn{Javan14}, did not appear in the best set of covariates
in any of our linear regression or Gaussian process regression analysis. 

Now, the question arises that which of the four implementations of linear and Gaussian process regression yields the best result in terms of Bayes factor.
In this regard, we first note that the maximum values of the log of $B_i$ given by (\ref{eq:B_i}) associated with the linear regression models for $(\mu_k,\sigma^2_k)=(25,10)$
and $(\mu_k,\sigma^2_k)=(40,10)$ are given by $51.925$ and $94.697$, respectively, while the same for the Gaussian process regression counterpart are
$46.085$ and $91.147$. Thus, in this regard, the linear regression model with $(\mu_k,\sigma^2_k)=(40,10)$ given its Bayes factor guided best possible set of covariates,
is the best model, followed by Gaussian process regression
with $(\mu_k,\sigma^2_k)=(40,10)$, given its best set of covariates. 
The next best models, given their respective best set of covariates with respect to Bayes factors, in order, 
are provided by the linear regressions model with $(\mu_k,\sigma^2_k)=(25,10)$ and 
the Gaussian process regression model with $(\mu_k,\sigma^2_k)=(25,10)$.

Note that the genes YOAB\_at and YXLD\_at are common to the best two models, which once again vindicates their importance.

\section{Appendix} \label{sec:appendix}
In this section we provide the proofs of all the lemmas stated in the paper. Before proving the lemmas, we state some results which are useful in proving the lemmas.
\begin{result}
	\label{theorem:wang}
	Let $\mathbb C^{m\times n}$ denote the vector space of all $m\times n$ matrices. If $G,H\in\mathbb C^{n\times n}$
	are positive semidefinite Hermitian matrices and $1\leq i_1<\cdots<i_k\leq n$, then the following two inequalities hold:
	\begin{align}
	\sum_{t=1}^k\lambda_{i_t}\left(GH\right)&\leq\sum_{t=1}^k\lambda_{i_t}\left(G\right)\lambda_t\left(H\right) \notag \\
	\sum_{t=1}^k\lambda_t\left(GH\right)&\geq\sum_{t=1}^k\lambda_{i_t}\left(G\right)\lambda_{n-i_t+1}\left(H\right). \notag
	\end{align}
\end{result}
\noindent A proof of this result can be found in \cite{Wang92}.

\begin{result}\label{eqn:symmetric2}
	\begin{enumerate}[(a)]
		\item For matrices $A_1$ and $A_2$, let $A_1\preceq A_2$ imply that $A_2-A_1$ is nonnegative definite. Then for any symmetric matrix $A$,
		\begin{equation}
		\lambda_{\min}(A)I\preceq A\preceq\lambda_{\max}(A)I, \notag
		\end{equation} 
		
		\item For symmetric matrices $A_1$ and $A_2$, 
		\begin{equation}
		\lambda_{\min}(A_1)+\lambda_{\min}(A_2) \leq \lambda_{\min}(A_1+A_2)\leq \lambda_{\max}(A_1+A_2) \leq \lambda_{\max}(A_1)+\lambda_{\max}(A_2). \notag 
		\end{equation}
		
	\end{enumerate}
\end{result}

\begin{lemma}\label{lm_3}
	Consider the setup of Section \ref{subsec:time_series1}. The eigenvalues $Cov\left(\tilde\epsilon_{t+h},\tilde\epsilon_t\right)=\sigma^2_{\epsilon}(1-\rho^2)^{-1}\Sigma_\epsilon$ are all positive, and bounded.
\end{lemma}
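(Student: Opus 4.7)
The strategy is to exploit the well-known explicit tridiagonal form of $\Sigma_\epsilon^{-1}$ and apply Gerschgorin's circle theorem, which reduces the problem to bounding the roots of elementary inequalities.

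First I would record the standard identity
$$\Sigma_\epsilon^{-1} = \frac{1}{1-\rho^2}\,T_n, \qquad T_n=\begin{pmatrix} 1 & -\rho & & & \\ -\rho & 1+\rho^2 & -\rho & & \\ & \ddots & \ddots & \ddots & \\ & & -\rho & 1+\rho^2 & -\rho \\ & & & -\rho & 1 \end{pmatrix},$$
for the AR(1) correlation matrix $\Sigma_\epsilon = (\rho^{|i-j|})_{i,j=1}^n$. This can be verified either by a direct multiplication with $\Sigma_\epsilon$ or by factorising the stationary $AR(1)$ Gaussian density as $p(\epsilon_1)\prod_{t\geq 2}p(\epsilon_t\mid\epsilon_{t-1})$ and reading off the precision matrix from the resulting quadratic form.

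Next I would apply Gerschgorin's circle theorem to $T_n$. The two corner discs are centred at $1$ with radius $|\rho|$, so they lie in $[1-|\rho|,\,1+|\rho|]$, while the interior discs are centred at $1+\rho^2$ with radius $2|\rho|$, so they lie in $[(1-|\rho|)^2,\,(1+|\rho|)^2]$. Because $|\rho|<1$ implies $(1-|\rho|)^2\le 1-|\rho|$ and $1+|\rho|\le (1+|\rho|)^2$, the interior interval already contains the corner interval, and every eigenvalue of $T_n$ falls inside $[(1-|\rho|)^2,\,(1+|\rho|)^2]$. Dividing by $1-\rho^2=(1-|\rho|)(1+|\rho|)$, the eigenvalues of $\Sigma_\epsilon^{-1}$ lie in $\bigl[\frac{1-|\rho|}{1+|\rho|},\,\frac{1+|\rho|}{1-|\rho|}\bigr]$, and inversion gives the same interval for the eigenvalues of $\Sigma_\epsilon$.

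Scaling through by $\sigma^2_\epsilon(1-\rho^2)^{-1}$ then places every eigenvalue of the target matrix in $\bigl[\sigma^2_\epsilon/(1+|\rho|)^2,\,\sigma^2_\epsilon/(1-|\rho|)^2\bigr]$, both endpoints strictly positive. Since the prior in Section \ref{subsec:time_series1} is supported on $[-1+\gamma,1-\gamma]$, both bounds are uniform in $n$ and in the admissible values of $\rho$, which is exactly the conclusion required. The one step deserving care is justifying the tridiagonal formula for $\Sigma_\epsilon^{-1}$; after that the Gerschgorin estimate and the restriction $|\rho|\le 1-\gamma$ do all the remaining work with no asymptotic analysis needed.
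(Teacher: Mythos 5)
Your proof is correct, and it reaches the same quantitative bounds as the paper by a cleaner route. Both arguments start from the same explicit tridiagonal form of $(1-\rho^2)\Sigma_{\epsilon}^{-1}$, but where you apply Gerschgorin's circle theorem directly to that tridiagonal matrix (corner discs in $[1-|\rho|,1+|\rho|]$, interior discs in $[(1-|\rho|)^2,(1+|\rho|)^2]$, the former contained in the latter), the paper instead invokes an external approximation result for the eigenvalues of this matrix, complete with error bounds, and concludes only that the eigenvalues lie between $\min\{(1+\rho)^2,(1-\rho)^2\}+o(1)$ and $\max\{(1+\rho)^2,(1-\rho)^2\}+o(1)$. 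Your version buys three things: it is self-contained (no citation needed for the spectral approximation), it yields exact non-asymptotic containment in $\bigl[(1-|\rho|)^2,(1+|\rho|)^2\bigr]$ rather than bounds with $o(1)$ corrections that the paper must then absorb via the phrase ``for sufficiently large $n$,'' and it is uniform in $n$ and in $\rho\in[-1+\gamma,1-\gamma]$ from the outset, which is what the downstream use of the lemma (bounding $\lambda_{\max}(A_{n,\bs})$ and the eigenvalues of $H_{n,\bs}$) actually requires. It is also consistent in spirit with the paper, which already uses Gerschgorin's theorem in verifying (A4) and in Lemma 2. The one step you rightly flag as needing justification, the tridiagonal formula for $\Sigma_{\epsilon}^{-1}$, is exactly the formula the paper displays, so there is no gap there; either a direct multiplication or the Markov factorisation of the AR(1) density suffices.
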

\begin{proof}
	Let $Cov\left(\tilde\epsilon_{t+h},\tilde\epsilon_t\right)=\sigma^2_{\epsilon}(1-\rho^2)^{-1}\Sigma_\epsilon$. We first find the highest and lowest eigenvalues of $\Sigma_{\epsilon}$. It can be shown that the inverse of $\Sigma_{\epsilon}$ is a tridiagonal matrix as follows:
	\begin{eqnarray*}
		\left(1-\rho^2\right)\Sigma_{\epsilon}^{-1}=
		\begin{pmatrix}
			1 & -\rho & 0 & 0& \ldots & 0 & 0\\
			-\rho & 1+\rho^2 & -\rho & 0& \ldots & 0 & 0\\
			0 & -\rho & 1+\rho^2 & -\rho & \ldots & 0 & 0 \\
			&  \ldots & & \ldots & & \ldots & \\
			0 & 0 & 0 & 0 & \ldots &1+\rho^2 & -\rho \\
			0 & 0 & 0 & 0 & \ldots &- \rho& 1 
		\end{pmatrix} 
	\end{eqnarray*}
	
	\cite{AR1_approx} shows that the approximations of the eigenvalues of $\left(1-\rho^2\right)\Sigma_{\epsilon}^{-1}$ (arranged in increasing order if $\rho>0$ and in decreasing order if $\rho<0$) are
	$$\lambda_k\left( \left(1-\rho^2\right)\Sigma_{\epsilon}^{-1}\right) \approx 1-2\rho \cos \frac{k\pi}{n+1}+\rho^2-\frac{4}{n+1} \rho^2 \left(\sin \frac{k\pi}{n+1} \right)^2, $$
	with corresponding error bound
	$$ \xi_k=\frac{2\rho^2}{\sqrt{n+1}} \sin \frac{k\pi}{n+1},~~ \mbox{for~} k=1,\ldots,n.$$

	Combining the above facts, it can be seen that the eigenvalues of $\left(1-\rho^2\right)\Sigma_{\epsilon}^{-1}$ are bounded by $\min\{(1+\rho)^2,(1-\rho)^2\}+o(1)$ and $\max\{(1+\rho)^2,(1-\rho)^2\}+o(1)$. Thus the eigenvalues of $\left(1-\rho^2\right)^{-1}\Sigma_{\epsilon}$ are bounded by $ \left\{(1-|\rho|)^2+o(1) \right\}^{-1}$ and $\left\{(1+|\rho|)^2+o(1) \right\}^{-1}$. Thus all the eigenvalues of $\Sigma_{\epsilon}$ are finite. As $|\rho|<1-\gamma$ for some small enough $\gamma$, clearly for sufficiently large $n$ all the eigenvalues of $
	\left(1-\rho^2\right)\Sigma_{\epsilon}^{-1}$ are positive. 
	Also the highest eigenvalue is less than $2/\gamma^2$.
\end{proof}

\begin{lemma}\label{lm_2}
	Consider the setup in Section \ref{subsec:time_series1}. Let $P_{n,\bs}$ be the orthogonal projection matrix onto the column space of $Z_{n,\bs}$. Then the eigenvalues of $\partial P_{n,\bs}/\partial \rho$ are uniformly bounded.
\end{lemma}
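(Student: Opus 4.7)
The plan is to exploit the projection identity $P_{n,\bs}^2 = P_{n,\bs}$ to pin down a block off-diagonal structure for the derivative, and then bound its operator norm in terms of $\|\partial Z_{n,\bs}/\partial\rho\|_{op}$ and $\lambda_{\min}(Z_{n,\bs}^TZ_{n,\bs})$. Writing $Z := Z_{n,\bs}$, $P := P_{n,\bs}$ and $\dot Z := \partial Z/\partial\rho$, I would first differentiate $P = Z(Z^TZ)^{-1}Z^T$ using the product rule together with the standard identity $\partial(Z^TZ)^{-1}/\partial\rho = -(Z^TZ)^{-1}(\dot Z^T Z + Z^T\dot Z)(Z^TZ)^{-1}$, and simplify using $PZ=Z$ to obtain the compact expression $\dot P = B + B^T$, where $B := (I-P)\dot Z(Z^TZ)^{-1}Z^T$.

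The next step is to observe that differentiating $P^2 = P$ gives $\dot P = \dot P P + P\dot P$; sandwiching this identity between $P$ on one side and $I-P$ on the other yields $P\dot P P = 0$ and $(I-P)\dot P(I-P)=0$. Thus $\dot P$ has block off-diagonal form relative to the decomposition $\mathbb{R}^n = \text{range}(P) \oplus \text{range}(I-P)$, with $B$ vanishing on $\text{range}(I-P)$ and mapping $\text{range}(P)$ into $\text{range}(I-P)$. The non-zero eigenvalues of $\dot P$ are therefore exactly the $\pm$ singular values of $B$, and so
\[
\|\dot P\|_{op} \;=\; \|B\|_{op} \;\leq\; \|\dot Z\|_{op}\cdot \|(Z^TZ)^{-1}Z^T\|_{op} \;=\; \frac{\|\dot Z\|_{op}}{\sqrt{\lambda_{\min}(Z^TZ)}},
\]
where the final identity uses $((Z^TZ)^{-1}Z^T)((Z^TZ)^{-1}Z^T)^T = (Z^TZ)^{-1}$.

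It then remains to control the numerator and the denominator. For the numerator, the $(t,j)$-entry of $\dot Z$ equals $\sum_{k=1}^{t-1}(t-k)\rho^{t-k-1}x_{k,\bs,j}$, and since $|\rho|\leq 1-\gamma$ and the covariates lie in a compact set, each such entry is uniformly bounded by a constant depending only on $\gamma$. Hence $\|\dot Z\|_{op}\leq \|\dot Z\|_F = O(\sqrt n)$, via essentially the same calculation already used in Section \ref{subsec:time_series1} to bound $(\partial\bmu_{\bs}/\partial\rho)^T(\partial\bmu_{\bs}/\partial\rho)$.

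The main obstacle will be the matching lower bound $\lambda_{\min}(Z^TZ) \geq c\,n$ uniformly in $\rho \in [-1+\gamma, 1-\gamma]$. This is in effect a non-degeneracy condition on the design that is implicit in the setup of Section \ref{subsec:time_series1}, since Zellner's $g$-prior uses $(Z^TZ)^{-1}$ as its scale matrix and its asymptotic well-posedness demands controlled behaviour of the inverse Gram matrix. Invoking the compactness and non-zeroness of the covariate set, one deduces that $Z^TZ/n$ stays bounded away from singularity uniformly in $\rho$ in the restricted range; combined with the numerator estimate this delivers $\|\dot P\|_{op}=O(1)$, and hence the claimed uniform boundedness of the eigenvalues of $\partial P_{n,\bs}/\partial\rho$.
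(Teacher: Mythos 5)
Your argument is correct, but it takes a genuinely different route from the paper's. The paper works term by term on the three-term expansion of $\partial P_{n,\bs}/\partial\rho$: it writes $Z^TZ=nB_n$ with bounded entries, bounds each entry of $\frac{\partial Z}{\partial\rho}(Z^TZ)^{-1}Z^T$ explicitly via geometric-series sums, applies Gerschgorin's circle theorem to control that term's spectrum by $(1-|\rho|)^{-3}$, handles the middle term by similarity tricks ($AB$ and $BA$ share eigenvalues, $P_{n,\bs}Z=Z$), and finally assembles the pieces with Weyl-type inequalities for sums of symmetric matrices. You instead collapse the three terms into the identity $\partial P/\partial\rho=B+B^T$ with $B=(I-P)\frac{\partial Z}{\partial\rho}(Z^TZ)^{-1}Z^T$, exploit the block off-diagonal structure forced by $P^2=P$ to reduce everything to $\|B\|_{op}$, and then use the clean factorization $\|(Z^TZ)^{-1}Z^T\|_{op}=\lambda_{\min}(Z^TZ)^{-1/2}$ together with a crude Frobenius bound $\|\partial Z/\partial\rho\|_{F}=O(\sqrt n)$. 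Your route is shorter, avoids the somewhat delicate use of Gerschgorin on a non-symmetric matrix, and makes transparent exactly where the design enters: through $\lambda_{\min}(Z^TZ)\geq cn$. On that last point you flag the lower bound as the "main obstacle," but you are not worse off than the paper: the paper silently assumes the same thing when it asserts that $(Z^TZ)^{-1}=n^{-1}B_n^{-1}$ has uniformly bounded elements (boundedness of $B_n$ alone does not give boundedness of $B_n^{-1}$). Making that non-degeneracy condition an explicit hypothesis, as you suggest, is if anything an improvement; the only thing you give up relative to the paper is the explicit constant $(1-|\rho|)^{-3}$, which is not needed for the lemma.
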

\begin{proof}
	For simplicity we write $Z$ instead of $Z_{n,\bs}$. Recall $P_{n,\bs}=Z\left(Z^TZ \right)^{-1} Z^T$. Thus,
	\begin{eqnarray}
	\frac{\partial P_{n,\bs}}{\partial \rho}=\frac{\partial Z}{\partial \rho}\left(Z^TZ \right)^{-1} Z^T - Z \left(Z^TZ \right)^{-1} \frac{\partial \left(Z^TZ \right)}{\partial \rho}\left(Z^TZ \right)^{-1} Z^T+ Z\left(Z^TZ \right)^{-1}\frac{\partial Z^T}{\partial \rho} \label{A1}
	\end{eqnarray}
	First note that the $t$-th row of $Z$, ${\bf z}_{t,\bs}=\sum_{k=1}^{t} \rho^{t-k} {\bf x}_{k,\bs}$. Thus,
	\begin{eqnarray}
	Z^TZ=n\sum_{t=1}^{n} \sum_{k_1=1}^{t} \sum_{k_2=1}^{t} \rho^{2t-k_1-k_2} \left(\frac{{\bf x}_{k_1,\bs}{\bf x}_{k_2,\bs}^T}{n}\right). \label{A2}
	\end{eqnarray}
	Since the covariates lie in a compact space, the elements of the $|\bs|\times |\bs|$ matrices ${\bf x}_{k_1,\bs}{\bf x}_{k_2,\bs}^T$ in (\ref{A2}) are uniformly bounded. 
	Further note that 
	\begin{eqnarray*}
		\frac{1}{n}\sum_{t=1}^{n} \sum_{k_1=1}^{t} \sum_{k_2=1}^{t} \rho^{2t-k_1-k_2}
		&\leq& \frac{1}{n} \sum_{t=1}^{n} \left( \sum_{k_1=1}^{t} |\rho|^{t-k}  \right)^2=\frac{1}{n(1-|\rho|)^{2}} \sum_{t=1}^{n} \left(1-|\rho|^t \right)^2\\ 
		&=& (1-|\rho|)^{-2}\left[1-2\frac{|\rho|}{n}\frac{\left(1-|\rho|^n \right)}{\left(1-|\rho| \right)} +\frac{\rho^2\left(1-\rho^{2n} \right)}{n\left(1-|\rho|^2 \right)}\right].
	\end{eqnarray*}
	As $\rho\in[-1+\gamma,1-\gamma]$, the above facts imply  $Z^TZ=nB_n$, where $B_n$ is a $|\bs|\times |\bs|$ matrix whose elements are uniformly bounded. It follows that $\left(Z^TZ\right)^{-1} =n^{-1} B_n^{-1}=n^{-1} C_{n,\bs}$, where the elements of $C_{n,\bs}$ are uniformly bounded.
	
	Let $\partial Z/\partial \rho =H_n$. Then the $t$-th row of $H_n$ is ${\bf h}_{t,\bs}=\sum_{k=1}^{t} (t-k) \rho^{t-k-1} {\bf x}_{k,\bs}$. then the first term of (\ref{A1}),
	\begin{eqnarray*}
		D:=\frac{\partial Z}{\partial \rho}\left(Z^TZ \right)^{-1} Z^T &=& n^{-1}  \begin{pmatrix}
			{\bf h}_{1,\bs}^TC_{n,\bs} {\bf z}_{1,\bs} & \ldots & {\bf h}_{1,\bs}^TC_{n,\bs} {\bf z}_{n,\bs}\\
			\ldots & & \ldots \\
			{\bf h}_{n,\bs}^TC_{n,\bs} {\bf z}_{1,\bs} & \ldots & {\bf h}_{n,\bs}^TC_{n,\bs} {\bf z}_{n,\bs}
		\end{pmatrix}.
	\end{eqnarray*}
	The $(t_1,t_2)$-th element of $D$ is
	\begin{eqnarray*}
		d_{t_1,t_2}&=& \frac{1}{n} {\bf h}_{t_1,\bs}^TC_{n,\bs} {\bf z}_{t_2,\bs} = \frac{1}{n} \left[\sum_{k=1}^{t_1} (t_1-k) \rho^{t_1-k-1} {\bf x}_{k,\bs}^T\right] C_{n,\bs} \left[\sum_{k=1}^{t} \rho^{t-k} {\bf x}_{k,\bs} \right]\\
		&=& \frac{1}{n} \sum_{k_1=1}^{t_1}\sum_{k_1=1}^{t_2} (t_1-k_1) \rho^{t_1-k_1-1+t_2-k_2} {\bf x}_{k_1,\bs}^T C_{n,\bs}  {\bf x}_{k_2,\bs}. 
	\end{eqnarray*}
	Next note that $ {\bf x}_{k_1,\bs}^T C_{n,\bs}  {\bf x}_{k_2,\bs} =\sum_{j=1}^{|\bs|}\sum_{l=1}^{|\bs|} C_{j,l} x_{k_1,j}x_{k_2,l}$. As elements of $C_{n,\bs}$, as well as, the covariate space are uniformly bounded, the above quadratic is bounded for any $(k_1,k_2)$.
	
	\noindent Now, 
	\begin{eqnarray}
	\frac{1}{n} \sum_{k_1=1}^{t_1}\sum_{k_1=1}^{t_2} (t_1-k_1) \rho^{t_1-k_1-1+t_2-k_2} &\leq&  \frac{1}{n}\left( \sum_{k_1=1}^{t_1} (t_1-k_1)|\rho|^{t_1-k_1-1} \right) \left(\sum_{k_1=1}^{t_2}  \rho^{t_2-k_2} \right) \notag \\
	&=&\frac{1}{n} \left\{ \frac{1-|\rho|^{t_1}\left(t_1+|\rho|-t_1|\rho| \right)}{(1-|\rho|)^2} \right\} \left(\frac{1-|\rho|^{t_2}}{1-|\rho|} \right).\label{A3}
	\end{eqnarray}
	By Gerschgorin circle theorem the maximum eigenvalue of $D$ is bounded by $$\max_{t_1\in\{1,\ldots,n\}} \frac{1}{n} \left\{ \frac{1-|\rho|^{t_1}\left(t+|\rho|-t|\rho| \right)}{(1-|\rho|)^2} \right\} \sum_{t_2=1}^{n}  \left(\frac{1-|\rho|^{t_2}}{1-|\rho|} \right)\leq \left(1-|\rho| \right)^{-3},$$
	and the minimum eigenvalue is bounded below by $-\left(1-|\rho| \right)^{-3}$. As $\rho\in[-1+\gamma,1-\gamma]$, eigenvalues of $D$ are uniformly bounded.

	Next note that $A$ and $A^T$ have same eigenvalues.  As the third term of (\ref{A1}) is the transpose of the first term, the third term also has finite eigenvalues.

	Next consider the second term of (\ref{A1}). Note that, 
	\begin{eqnarray*}
		Z \left(Z^TZ \right)^{-1} \frac{\partial \left(Z^TZ \right)}{\partial \rho}\left(Z^TZ \right)^{-1} Z^T = P_{n,\bs} \frac{\partial Z }{\partial \rho}\left(Z^TZ \right)^{-1} Z^T + Z \left(Z^TZ \right)^{-1} \frac{\partial Z^T }{\partial \rho}P_{n,\bs}.
	\end{eqnarray*}
	Recall, for two square matrices $A$ and $B$, $AB$ and $BA$ have same eigenvalues. Thus eigenvalues of $P_{n,\bs} \frac{\partial Z }{\partial \rho}\left(Z^TZ \right)^{-1} Z^T$ is same as eigenvalues of $ \frac{\partial Z }{\partial \rho}\left(Z^TZ \right)^{-1} Z^TP_{n,\bs}=\frac{\partial Z }{\partial \rho}\left(Z^TZ \right)^{-1} Z^T$, and eigenvalues of $Z \left(Z^TZ \right)^{-1} \frac{\partial Z^T }{\partial \rho}P_{n,\bs}$ is same as that of $P_{n,\bs}Z \left(Z^TZ \right)^{-1} \frac{\partial Z^T }{\partial \rho}=Z \left(Z^TZ \right)^{-1} \frac{\partial Z^T }{\partial \rho}$, as $P_{n,\bs}Z=Z$.

	Finally, note that for any matrix $A$, $\lambda_{\max}(A+A^T)$ is bounded above by $2\lambda_{\max}(A)$, and $\lambda_{\min}(A+A^T)$ is bounded below by $2\lambda_{\min}(A)$. Hence the second term of (\ref{A1}) has finite eigenvalues. Similarly, the sum of first and third term of (\ref{A1}) has bounded eigenvalues. 
	
	Combining the above results, note that the RHS of (\ref{A1}) can written as sum of two symmetric matrices, and eigenvalues of each of them are bounded. Using the result that for symmetric matrices $A$ and $B$, 
	
	{\centering
		$\lambda_{\min}(A)+\lambda_{\min}(B) \leq \lambda_{\min}(A+B)\leq \lambda_{\max}(A+B) \leq \lambda_{\max}(A)+\lambda_{\max}(B), $
		\par}
	
	\noindent the result follows.
\end{proof}

\bibliographystyle{natbib}
\bibliography{irmcmc}

\begin{thebibliography}{}

\bibitem[Ando(2010)Ando]{Ando10}
Ando, T. (2010).
\newblock {\em {B}ayesian {M}odel {S}election and {S}tatistical {M}odeling\/}.
\newblock CRC Press, Boca Raton, FL.

\bibitem[Banerjee and Ghosal(2014)Banerjee and Ghosal]{Banerjee14}
Banerjee, S. and Ghosal, S. (2014).
\newblock {B}ayesian {V}ariable {S}election in {G}eneralized {A}dditive
  {P}artial {L}inear {M}odels.
\newblock {\em Stat\/}, {\bf 3}, 363--378.

\bibitem[Bartlett(1957)Bartlett]{Bartlett57}
Bartlett, M. (1957).
\newblock {A} {C}omment on {D. V.} {L}indley's {S}tatistical {P}aradox.
\newblock {\em Biometrika\/}, {\bf 44}, 533--534.

\bibitem[Bayarri {\em et~al.}(2012)Bayarri, Berger, Forte, and
  Garc\'{i}a-Donato]{Bayarri12}
Bayarri, M.~J., Berger, J.~O., Forte, A., and Garc\'{i}a-Donato, G. (2012).
\newblock {C}riteria for {B}ayesian {M}odel {C}hoice {W}ith {A}pplication to
  {V}ariable {S}election.
\newblock {\em The Annals of Statistics\/}, {\bf 40}, 1550--1577.

\bibitem[B\"{u}hlmann(2013)B\"{u}hlmann]{Buhl13}
B\"{u}hlmann, P. (2013).
\newblock {S}tatistical {S}ignificance in {H}igh-{D}imensional {L}inear
  {M}odels.
\newblock {\em Bernoulli\/}, {\bf 19}(4), 1212--1242.

\bibitem[B\"{u}hlmann {\em et~al.}(2014)B\"{u}hlmann, Kalisch, and
  Meier]{Buhl14}
B\"{u}hlmann, P., Kalisch, M., and Meier, L. (2014).
\newblock {H}igh-{D}imensional {S}tatistics {W}ith a {V}iew {T}oward
  {A}pplications in {B}iology.
\newblock {\em Annual Review of Statistics and Its Application\/}, {\bf 1}(1),
  255--278.

\bibitem[Casella {\em et~al.}(2009)Casella, Giron, Martinez, and
  Moreno]{Casella09}
Casella, G., Giron, F.~J., Martinez, M.~L., and Moreno, E. (2009).
\newblock {C}onsistency of {B}ayesian {P}rocedures for {V}ariable {S}election.
\newblock {\em The Annals of Statistics\/}, {\bf 37}, 1207--1228.

\bibitem[Chen {\em et~al.}(1999)Chen, Ibrahim, and Yiannoutsos]{Chen99}
Chen, M.-H., Ibrahim, J.~G., and Yiannoutsos, C. (1999).
\newblock {P}rior {E}licitation, {V}ariable {S}election and {B}ayesian
  {C}omputation for {L}ogistic {R}egression {M}odels.
\newblock {\em Journal of the Royal Statistical Society. Series B\/}, {\bf 61},
  223--242.

\bibitem[Chevalier {\em et~al.}(2020)Chevalier, Fearnhead, and
  Sutton]{Chevalier20}
Chevalier, A., Fearnhead, P., and Sutton, M. (2020).
\newblock {R}eversible {J}ump {PDMP} {S}amplers for {V}ariable {S}election.
\newblock arXiv:2010.11771v1.

\bibitem[Chib and Kuffner(2016)Chib and Kuffner]{Chib16}
Chib, S. and Kuffner, T.~A. (2016).
\newblock {B}ayes {F}actor {C}onsistency.
\newblock Available at arXiv:1607.00292.

\bibitem[Choi and Rousseau(2015)Choi and Rousseau]{Choi15}
Choi, T. and Rousseau, J. (2015).
\newblock {A} {N}ote on {B}ayes {F}actor {C}onsistency in {P}artial {L}inear
  {M}odels.
\newblock {\em Journal of Statistical Planning and Inference\/}, {\bf 166},
  158--170.

\bibitem[Das and Bhattacharya(2019)Das and Bhattacharya]{Das19}
Das, M. and Bhattacharya, S. (2019).
\newblock {T}ransdimensional {T}ransformation {B}ased {M}arkov {C}hain {M}onte
  {C}arlo.
\newblock {\em Brazilian Journal of Probability and Statistics\/}, {\bf 33}(1),
  87--138.

\bibitem[Dawid and Musio(2015)Dawid and Musio]{Dawid15}
Dawid, A.~P. and Musio, M. (2015).
\newblock {B}ayesian {M}odel {S}election {B}ased on {P}roper {S}coring {R}ules.
\newblock {\em Bayesian Analysis\/}, {\bf 10}, 479--499.

\bibitem[Dey and Bhattacharya(2016)Dey and Bhattacharya]{Dey16}
Dey, K.~K. and Bhattacharya, S. (2016).
\newblock {O}n {G}eometric {E}rgodicity of {A}dditive and {M}ultiplicative
  {T}ransformation {B}ased {M}arkov {C}hain {M}onte {C}arlo in {H}igh
  {D}imensions.
\newblock {\em Brazilian Journal of Probability and Statistics\/}, {\bf 30},
  570--613.

\bibitem[DiCiccio {\em et~al.}(1997)DiCiccio, Kass, Raftery, and
  Wasserman]{DiCiccio97}
DiCiccio, T.~J., Kass, R.~E., Raftery, A.~E., and Wasserman, L. (1997).
\newblock {C}omputing {B}ayes {F}actors by {C}ombining {S}imulation and
  {A}symptotic {A}pproximations.
\newblock {\em Journal of the American Statistical Association\/}, {\bf 92},
  903--915.

\bibitem[Draper and Smith(2005)Draper and Smith]{Draper98}
Draper, N.~R. and Smith, H. (2005).
\newblock {\em {A}pplied {R}egression {A}nalysis\/}.
\newblock Wiley-Interscience, New York.
\newblock Third Edition.

\bibitem[Dutta(2012)Dutta]{Dutta12}
Dutta, S. (2012).
\newblock {M}ultiplicative {R}andom {W}alk {M}etropolis-{H}astings on the
  {R}eal {L}ine.
\newblock {\em Sankhya. Series B\/}, {\bf 74}, 315--342.
\newblock Also available at arXiv.

\bibitem[Dutta and Bhattacharya(2014)Dutta and Bhattacharya]{Dutta14}
Dutta, S. and Bhattacharya, S. (2014).
\newblock {M}arkov {C}hain {M}onte {C}arlo {B}ased on {D}eterministic
  {T}ransformations.
\newblock {\em Statistical Methodology\/}, {\bf 16}, 100--116.
\newblock Also available at http://arxiv.org/abs/1106.5850. Supplement
  available at http://arxiv.org/abs/1306.6684.

\bibitem[Eubank(1999)Eubank]{Eubank99}
Eubank, R. (1999).
\newblock {\em {N}onparametric {R}egression and {S}pline {S}moothing\/}.
\newblock Marcel Dekker, New York, NY.
\newblock Second Edition.

\bibitem[Fern\'{a}ndez {\em et~al.}(2001)Fern\'{a}ndez, E, and
  Steel]{Fernandez01}
Fern\'{a}ndez, C., E, L., and Steel, M. F.~J. (2001).
\newblock {B}enchpark {P}riors for {B}ayesian {M}odel {A}veraging.
\newblock {\em Journal of Econometrics\/}, {\bf 100}, 381--427.

\bibitem[Fragoso {\em et~al.}(2018)Fragoso, Bertoli, and Louzada]{BMA_review}
Fragoso, T.~M., Bertoli, W., and Louzada, F. (2018).
\newblock Bayesian model averaging: a systematic review and conceptual
  classification.
\newblock {\em Int. Stat. Rev.}, {\bf 86}(1), 1--28.

\bibitem[Gilks and Roberts(1996)Gilks and Roberts]{Gilks96}
Gilks, W.~R. and Roberts, G.~O. (1996).
\newblock {S}trategies for improving {MCMC}.
\newblock In W.~Gilks, S.~Richardson, and D.~Spiegelhalter, editors, {\em
  {M}arkov {C}hain {M}onte {C}arlo in {P}ractice\/}, Interdisciplinary
  Statistics, pages 89--114, London. Chapman and Hall.

\bibitem[Giraud(2015)Giraud]{Giraud15}
Giraud, C. (2015).
\newblock {\em {I}ntroduction to {H}igh-{D}imensional {S}tatistics\/}.
\newblock CRC Press, Boca Raton, FL.

\bibitem[Green(1995)Green]{Green95}
Green, P.~J. (1995).
\newblock {R}eversible jump {M}arkov chain {M}onte {C}arlo computation and
  {B}ayesian model determination.
\newblock {\em Biometrika\/}, {\bf 82}, 711--732.

\bibitem[Guo and Speckman(1998)Guo and Speckman]{Guo09}
Guo, R. and Speckman, P. (1998).
\newblock {B}ayes {F}actor {C}onsistency in {L}inear {M}odels.
\newblock In {\em 2009 International Workshop on Objective Bayes
  Methodology\/}, Philadelphia.

\bibitem[Han and Carlin(2001)Han and Carlin]{Han01}
Han, C. and Carlin, B.~P. (2001).
\newblock {M}arkov {C}hain {M}onte {C}arlo {M}ethods for {C}omputing {B}ayes
  {F}actors.
\newblock {\em Journal of the American Statistical Association\/}, {\bf 96},
  1122--1132.

\bibitem[Heinze {\em et~al.}(2018)Heinze, Wallisch, and Dunkler]{Heinze18}
Heinze, G., Wallisch, C., and Dunkler, D. (2018).
\newblock {V}ariable {S}election -- {R}eview and {R}ecommendations for the
  {P}racticing {S}tatistician.
\newblock {\em Biometrical Journal\/}, {\bf 60}, 431--449.

\bibitem[Hong and Preston(2012)Hong and Preston]{Hong12}
Hong, H. and Preston, B. (2012).
\newblock {B}ayesian {A}veraging, {P}rediction and {N}onnested {M}odel
  {S}election.
\newblock {\em Journal of Econometrics\/}, {\bf 167}, 358--369.

\bibitem[Huang {\em et~al.}(2010)Huang, Horowitz, and Wei]{Huang10}
Huang, J., Horowitz, J.~L., and Wei, F. (2010).
\newblock {V}ariable {S}election in {N}onparametric {A}dditive {M}odels.
\newblock {\em The Annals of Statistics\/}, {\bf 38}, 2282--2313.

\bibitem[Ishwaran and Rao(2005)Ishwaran and Rao]{IshwaranRao2005}
Ishwaran, H. and Rao, J.~S. (2005).
\newblock Spike and slab variable selection: frequentist and {B}ayesian
  strategies.
\newblock {\em Ann. Statist.}, {\bf 33}(2), 730--773.

\bibitem[Javanmard and Montanari(2014)Javanmard and Montanari]{Javan14}
Javanmard, A. and Montanari, A. (2014).
\newblock {C}onfidence {I}ntervals and {H}ypothesis {T}esting for
  {H}igh-{D}imensional {R}egression.
\newblock {\em Journal of Machine Learning Research\/}, {\bf 15}, 2869--2909.

\bibitem[Jeffreys(1939)Jeffreys]{Jeffreys39}
Jeffreys, H. (1939).
\newblock {\em {T}heory of {P}robability\/}.
\newblock 1st edition. The Clarendon Press, Oxford.

\bibitem[Johnson and Rossell(2012)Johnson and Rossell]{Johnson12}
Johnson, V.~E. and Rossell, D. (2012).
\newblock {B}ayesian {M}odel {S}election in {H}igh-{D}imensional {S}ettings.
\newblock {\em Journal of the American Statistical Association\/}, {\bf 107},
  649--660.

\bibitem[Kass and Raftery(1995)Kass and Raftery]{Kass95}
Kass, R.~E. and Raftery, R.~E. (1995).
\newblock Bayes factors.
\newblock {\em Journal of the American Statistical Association\/}, {\bf
  90}(430), 773--795.

\bibitem[Kendall and Stuart(1947)Kendall and Stuart]{Kendall47}
Kendall, M.~G. and Stuart, A. (1947).
\newblock {\em {T}he {A}dvanced {T}heory of {S}tatistics\/}.
\newblock Charles Griffin \& Co., London.
\newblock Volume I, 3rd Edition.

\bibitem[Kundu and Dunson(2014)Kundu and Dunson]{Kundu14}
Kundu, S. and Dunson, D.~B. (2014).
\newblock {B}ayes {V}ariable {S}election in {S}emiparametric {L}inear {M}odels.
\newblock {\em Journal of the American Statistical Association\/}, {\bf 109},
  437--447.

\bibitem[Lange(2010)Lange]{Lange10}
Lange, K. (2010).
\newblock {\em {N}umerical {A}nalysis for {S}tatisticians\/}.
\newblock Springer, New York.
\newblock Second Edition.

\bibitem[Liang {\em et~al.}(2008)Liang, Paulo, Molina, Clyde, and
  Berger]{Liang08}
Liang, F., Paulo, R., Molina, G., Clyde, M.~A., and Berger, J.~O. (2008).
\newblock {M}ixtures of {\it g} {P}riors for {B}ayesian {V}ariable {S}election.
\newblock {\em Journal of the American Statistical Association\/}, {\bf
  103}(481), 410--423.

\bibitem[Lindley(1957)Lindley]{Lindley57}
Lindley, D. (1957).
\newblock {A} {S}tatistical {P}aradox.
\newblock {\em Biometrika\/}, {\bf 44}, 187--192.

\bibitem[Liu and Sabatti(2000)Liu and Sabatti]{Liu00}
Liu, J.~S. and Sabatti, S. (2000).
\newblock {G}eneralized {G}ibbs {S}ampler and {M}ultigrid {M}onte {C}arlo for
  {B}ayesian {C}omputation.
\newblock {\em Biometrika\/}, {\bf 87}, 353--369.

\bibitem[Liu {\em et~al.}(2011)Liu, Wang, and Liang]{Liu11}
Liu, X., Wang, L., and Liang, H. (2011).
\newblock {E}stimation and {V}ariable {S}election for {S}emiparametric
  {A}dditive {P}artial {L}inear {M}odels.
\newblock {\em Statistica Sinica\/}, {\bf 21}, 1225--1248.

\bibitem[Lunn {\em et~al.}(2006)Lunn, Whittaker, and Best]{Lunn06}
Lunn, D.~J., Whittaker, J.~C., and Best, N. (2006).
\newblock {A} {B}ayesian {T}oolkit for {G}enetic {A}ssociation {S}tudies.
\newblock {\em Genetic Epidemiology\/}, {\bf 30}, 231--247.

\bibitem[Magnus(1978)Magnus]{Magnus78}
Magnus, J.~R. (1978).
\newblock {T}he {M}oments of {P}roducts of {Q}uadratic {F}orms in {N}ormal
  {V}ariables.
\newblock {\em Statistica Neerlandica\/}, {\bf 32}, 201--210.

\bibitem[Marin {\em et~al.}(2014)Marin, Pillai, Robert, and Rousseau]{marin14}
Marin, J.~M., Pillai, N.~S., Robert, C.~P., and Rousseau, J. (2014).
\newblock {R}elevant {S}tatistics for {B}ayesian {M}odel {C}hoice.
\newblock {\em Journal of the Royal Statistical Society. Series B\/}, {\bf 76},
  833--859.

\bibitem[Marra and Wood(2011)Marra and Wood]{Marra11}
Marra, G. and Wood, S.~N. (2011).
\newblock {P}ractical {V}ariable {S}election for {G}eneralized {A}dditive
  {M}odels.
\newblock {\em Computational Statistics and Data Analysis\/}, {\bf 55},
  2372--2387.

\bibitem[Meinshausen {\em et~al.}(2009)Meinshausen, Meier, and
  B\"{u}hlmann]{Mein09}
Meinshausen, N., Meier, L., and B\"{u}hlmann, P. (2009).
\newblock p-{V}alues for {H}igh-{D}imensional {R}egression.
\newblock {\em Journal of the American Statistical Association\/}, {\bf
  104}(488), 1671--1681.

\bibitem[Meyer and Laud(2002)Meyer and Laud]{Meyer02}
Meyer, M.~C. and Laud, P.~W. (2002).
\newblock {P}redictive {V}ariable {S}election in {G}eneralized {L}inear
  {M}odels.
\newblock {\em Journal of the American Statistical Association\/}, {\bf 97},
  859--871.

\bibitem[Moreno and Gir\'{o}n(2008)Moreno and Gir\'{o}n]{Moreno08}
Moreno, E. and Gir\'{o}n, F.~J. (2008).
\newblock {C}omparison of {B}ayesian {O}bjective {P}rocedures for {V}ariable
  {S}election in {L}inear {R}egression.
\newblock {\em TEST\/}, {\bf 17}, 472--490.

\bibitem[Moreno {\em et~al.}(2010)Moreno, Gir\'{o}n, and Casella]{Moreno10}
Moreno, E., Gir\'{o}n, F.~J., and Casella, G. (2010).
\newblock {C}onsistency of {O}bjective {B}ayes {F}actors as the {M}odel
  {D}imension {G}rows.
\newblock {\em The Annals ofStatistics\/}, {\bf 38}, 1937--1952.

\bibitem[Moreno {\em et~al.}(2015)Moreno, Gir\'{o}n, and Casella]{Moreno15}
Moreno, E., Gir\'{o}n, F.~J., and Casella, G. (2015).
\newblock {P}osterior {M}odel {C}onsistency in {V}ariable {S}election as the
  {M}odel {D}imension {G}rows.
\newblock {\em Statistical Science\/}, {\bf 30}, 228--241.

\bibitem[Mukhopadhyay {\em et~al.}(2015)Mukhopadhyay, Samanta, and
  Chakrabarti]{Minerva15}
Mukhopadhyay, M., Samanta, T., and Chakrabarti, A. (2015).
\newblock {O}n {C}onsistency and {O}ptimality of {B}ayesian {V}ariable
  {S}election {B}ased on {\it g}-{P}rior in {N}ormal {L}inear {R}egression
  {M}odels.
\newblock {\em Annals of the Institute of Statistical Mathematics\/}, {\bf 67},
  963--997.

\bibitem[Newey(1991)Newey]{Newey91}
Newey, W.~K. (1991).
\newblock {U}niform {C}onvergence in {P}robability and {S}tochastic
  {E}quicontinuity.
\newblock {\em Econometrica\/}, {\bf 59}, 1161--1167.

\bibitem[Nishii(1996)Nishii]{Nishii84}
Nishii, R. (1996).
\newblock {A}symptotic {P}roperties of {C}riteria for {S}election of
  {V}ariables in {M}ultiple {R}egression.
\newblock {\em The Annals of Statistics\/}, {\bf 12}(2), 758--765.

\bibitem[Ntzoufras {\em et~al.}(2003)Ntzoufras, Dellaportas, and
  Forster]{Ntz03}
Ntzoufras, I., Dellaportas, P., and Forster, J.~J. (2003).
\newblock {B}ayesian {V}ariable and {L}ink {D}etermination for {G}eneralised
  {L}inear {M}odels.
\newblock {\em Journal of Statistical Planning and Inference\/}, {\bf 111},
  165--180.

\bibitem[O'Hara and Sillanp\"{a}\"{a}(2009)O'Hara and
  Sillanp\"{a}\"{a}]{OHara_2009}
O'Hara, R.~B. and Sillanp\"{a}\"{a}, M.~J. (2009).
\newblock A review of {B}ayesian variable selection methods: what, how and
  which.
\newblock {\em Bayesian Anal.}, {\bf 4}(1), 85--117.

\bibitem[Reich {\em et~al.}(2009)Reich, Storlie, and Bondell]{Reich09}
Reich, B.~J., Storlie, C.~B., and Bondell, H.~D. (2009).
\newblock {V}ariable {S}election in {B}ayesian {S}moothing {S}pline {ANOVA}
  {M}odels: {A}pplication to {D}eterministic {C}omputer {C}odes.
\newblock {\em Technometrics\/}, {\bf 51}, 110--120.

\bibitem[Robert(1993)Robert]{Robert93}
Robert, C.~P. (1993).
\newblock {A} {N}ote on {J}effreys-{L}indley {P}aradox.
\newblock {\em Statistica Sinica\/}, {\bf 3}, 601--608.

\bibitem[Rousseau and Choi(2012)Rousseau and Choi]{Rousseau12}
Rousseau, J. and Choi, T. (2012).
\newblock {B}ayes {F}actor {C}onsistency in {R}egression {P}roblems.
\newblock Unpublished report.

\bibitem[Roy and Bhattacharya(2020)Roy and Bhattacharya]{Roy20}
Roy, S. and Bhattacharya, S. (2020).
\newblock {F}unction {O}ptimization with {P}osterior {G}aussian {D}erivative
  {P}rocess.
\newblock arXiv:2010.13591v1.

\bibitem[Serfling(1980)Serfling]{Serfling80}
Serfling, R.~J. (1980).
\newblock {\em {A}pproximation {T}heorems of {M}athematical {S}tatistics\/}.
\newblock John Wiley \& Sons, Inc., New York.

\bibitem[Shang and Clayton(2011)Shang and Clayton]{Shang11}
Shang, Z. and Clayton, M.~K. (2011).
\newblock {C}onsistency of {B}ayesian {L}inear {M}odel {S}election {W}ith a
  {G}rowing {N}umber of {P}arameters.
\newblock {\em Journal of Statistical Planning and Inference\/}, {\bf 141},
  3463--3474.

\bibitem[Shao(1997)Shao]{Shao97}
Shao, J. (1997).
\newblock {A}n {A}symptotic {T}heory for {L}inear {M}odel {S}election.
\newblock {\em Statistica Sinica\/}, {\bf 7}, 221--264.
\newblock with discussion.

\bibitem[Shively {\em et~al.}(1999)Shively, Kohn, and Wood]{Shively99}
Shively, T.~S., Kohn, R., and Wood, S. (1999).
\newblock {V}ariable {S}election and {F}unction {E}stimation in {A}dditive
  {N}onparametric {R}egression {U}sing a {D}ata-{B}ased {P}rior.
\newblock {\em Journal of the American Statistical Association\/}, {\bf 94},
  777--806.

\bibitem[Sillanp\"{a}\"{a} and Arjas(1998)Sillanp\"{a}\"{a} and Arjas]{Sill98}
Sillanp\"{a}\"{a}, M.~J. and Arjas, E. (1998).
\newblock {B}ayesian {M}apping of {M}ultiple {Q}uantitative {T}rait {L}oci from
  {I}ncomplete {I}nbred {L}ine {C}ross {D}ata.
\newblock {\em Genetics\/}, {\bf 148}, 1373--1388.

\bibitem[Sillanp\"{a}\"{a} {\em et~al.}(2004)Sillanp\"{a}\"{a}, Gasbarra, and
  Arjas]{Sill04}
Sillanp\"{a}\"{a}, M.~J., Gasbarra, D., and Arjas, E. (2004).
\newblock {C}omment on ``{O}n the {M}etropolis-{H}astings {A}cceptance
  {P}robability to {A}dd or {D}rop a {Q}uantitative {T}rait {L}ocus in {M}arkov
  {C}hain {M}onte {C}arlo-{B}ased {B}ayesian {A}nalyses".
\newblock {\em Genetics\/}, {\bf 167}, 1037.

\bibitem[Stroeker(1983)Stroeker]{AR1_approx}
Stroeker, R. (1983).
\newblock Approximations of the eigenvalues of the covariance matrix of a
  first-order autoregressive process.
\newblock {\em Journal of Econometrics\/}, {\bf 22}(3), 269--279.

\bibitem[Tibshirani(1996)Tibshirani]{Tibshirani96}
Tibshirani, R. (1996).
\newblock {R}egression {S}hrinkage and {S}election via the {L}asso.
\newblock {\em Journal of the Royal Statistical Society. Series B\/}, {\bf 58},
  267--288.

\bibitem[Villa and Walker(2015)Villa and Walker]{Villa15}
Villa, C. and Walker, S. (2015).
\newblock {O}n the {M}athematics of the {J}effreys-{L}indley {P}aradox.
\newblock Available at arXiv:1503.04098.

\bibitem[Wang and Zhang(1992)Wang and Zhang]{Wang92}
Wang, B. and Zhang, F. (1992).
\newblock {S}ome {I}nequalities for the {E}igenvalues of the {P}roduct of
  {P}ositive {S}emidefinite {H}ermitian {M}atrices.
\newblock {\em Linear Algebra and its Applications\/}, {\bf 160}, 113--118.

\bibitem[Wang {\em et~al.}(2011)Wang, Liu, and Carroll]{Wang11}
Wang, L., Liu, X., and Carroll, R.~J. (2011).
\newblock {E}stimation and {V}ariable {S}election for {G}eneralized {A}dditive
  {P}artial {L}inear {M}odels.
\newblock {\em The Annals of Statistics\/}, {\bf 39}, 1827--1851.

\bibitem[Wang and Maruyama(2016)Wang and Maruyama]{Wang16}
Wang, M. and Maruyama, Y. (2016).
\newblock {C}onsistency of {B}ayes {F}actor for {N}onnested {M}odel {S}election
  {W}hen the {M}odel {D}imension {G}rows.
\newblock {\em Bernoulli\/}, {\bf 22}(4), 2080--2100.

\bibitem[Wang and Sun(2014)Wang and Sun]{Wang14}
Wang, M. and Sun, X. (2014).
\newblock {B}ayes {F}actor {C}onsistency for {N}ested {L}inear {M}odels {W}ith
  a {G}rowing {N}umber of {P}arameters.
\newblock {\em Journal of Statistical Planning and Inference\/}, {\bf 147},
  95--105.

\bibitem[Wang and George(2007)Wang and George]{Wang07}
Wang, X. and George, E.~I. (2007).
\newblock {A}daptive {B}ayesian {C}riteria in {V}ariable {S}election for
  {G}eneralized {L}inear {M}odels.
\newblock {\em Statistica Sinica\/}, {\bf 17}, 667--690.

\bibitem[Weisberg(2005)Weisberg]{Weisberg05}
Weisberg, S. (2005).
\newblock {\em {A}pplied {L}inear {R}egression\/}.
\newblock Wiley-Interscience, New York.
\newblock Third Edition.

\bibitem[Zellner and Siow(1980)Zellner and Siow]{Zellner80}
Zellner, A. and Siow, A. (1980).
\newblock {P}osterior {O}dds {R}atios for {S}elected {R}egression {H}ypotheses.
\newblock In J.~M. Bernardo, M.~H. DeGroot, D.~V. Lindley, and A.~F.~M. Smith,
  editors, {\em Bayesian Statistics: Proceedings of the First International
  Meeting Held in Valencia\/}, pages 585--603, Valencia, Spain. University of
  Valencia Press.

\end{thebibliography}

\end{document}